\definecolor{webblue}{rgb}{0,.5,0}
\definecolor{webred}{rgb}{0,.5,0}
\definecolor{webbrown}{rgb}{.6,0,0}
\newtheorem{thm}{Theorem}[section]
\newtheorem{lem}[thm]{Lemma}
\newtheorem{prop}[thm]{Proposition}
\newtheorem{conj}[thm]{Conjecture}
\theoremstyle{definition}
\newtheorem{rem}[thm]{Remark}
\numberwithin{equation}{section}
\title{Total positivity from the exponential Riordan arrays
 \thanks{Supported partially by the National Natural
Science Foundation of China (Nos. 11971206, 12022105) and the
Natural Science Foundation for Distinguished Young Scholars of
Jiangsu Province (No. BK20200048).
\newline\hspace*{5mm}
   {\it Email address:} bxzhu@jsnu.edu.cn (B.-X. Zhu)}}
\author{Bao-Xuan Zhu}
\date{\footnotesize School of Mathematics and Statistics,
         Jiangsu Normal University,
         Xuzhou 221116, PR China}
\begin{document}

\maketitle

\begin{abstract}
Log-concavity and almost log-convexity of the cycle index
polynomials were proved by Bender and Canfield [J. Combin. Theory
Ser. A 74 (1996)]. Schirmacher [J. Combin. Theory Ser. A 85 (1999)]
extended them to $q$-log-concavity and almost $q$-log-convexity.
Motivated by these, we consider the stronger properties total
positivity from the Toeplitz matrix and Hankel matrix.

By using exponential Riordan array methods, we give some criteria
for total positivity of the triangular matrix of coefficients of the
generalized cycle index polynomials, the Toeplitz matrix and Hankel
matrix of the polynomial sequence in terms of the exponential
formula, the logarithmic formula and the fractional formula,
respectively.

Finally, we apply our criteria to some triangular arrays satisfying
some recurrence relations, including Bessel triangles of two kinds
and their generalizations, the Lah triangle and its generalization,
the idempotent triangle and some triangles related to binomial
coefficients, rook polynomials and Laguerre polynomials.  We not
only get total positivity of these lower-triangles, and
$q$-Stieltjes moment properties and $3$-$q$-log-convexity of their
row-generating functions, but also prove that their triangular
convolutions preserve Stieltjes moment property. In particular, we
solve a conjecture of Sokal on $q$-Stieltjes moment property of rook
polynomials.
\bigskip\\
{\sl \textbf{MSC}:}\quad 05A20; 05A15; 11B83; 15B36; 44A60
\bigskip\\
{\sl \textbf{Keywords}:}\quad Exponential generating functions;
Riordan arrays; Recurrence relations; Total positivity; Hankel
matrices; Toeplitz matrices; Stieltjes moment property;
Convolutions; Continued fractions; $3$-$q$-log-convexity; Bessel
numbers; Lah numbers; Idempotent numbers; Rook polynomials; Laguerre
polynomials
\end{abstract}
\tableofcontents
\section{Introduction}

\subsection{Cycle index polynomials and Sheffer polynomials}
Let $x_1, x_2,\ldots$ be a sequence of nonnegative real numbers and
define $A_n(x_1,x_2,\ldots,x_n)$ and $P_n(x_1,x_2,\ldots,x_n)$ by
\begin{equation}\label{CPI+function}
\sum_{n\geq0}A_n(x_1,x_2,\ldots,x_n)t^n=\sum_{n\geq0}P_n(x_1,x_2,\ldots,x_n)\frac{t^n}{n!}=\exp\left(\sum_{j\geq1}x_j\frac{t^j}{j}\right).
\end{equation}
Then each $A_n(x_1,x_2,\ldots,x_n)$ is a polynomial in the variables
$x_j$ with $1\leq j\leq n$. This has a well-known combinatorial
significance \cite{Com74}: Let $\Sigma_n$ denote the symmetric group
and let $N_j(\sigma)$ be the number of $j$-cycles in the permutation
$\sigma$. Then

\begin{equation}
A_n(x_1,x_2,\ldots,x_n)=\frac{P_n(x_1,x_2,\ldots,x_n)}{n!}=\frac{1}{n!}\sum_{\sigma
\in \Sigma_n} x_1^{N_1(\sigma)}x_2^{N_2(\sigma)}\cdots
x_n^{N_n(\sigma)}.
\end{equation}
This $A_n(x_1,x_2,\ldots,x_n)$ is called {\it the cycle index
polynomial} generally due to P\'{o}lya \cite{Pol37}, although in
fact appearing in earlier work of Redfield \cite{Red27}. Bender and
Canfield \cite{BC96} proved that if the positive sequence
$(x_n)_{n\geq0}$ with $x_0=1$ is log-concave then
$(A_n(x_1,x_2,\ldots,x_n))_{n\geq0}$ is log-concave and
$(P_n(x_1,x_2,\ldots,x_n))_{n\geq0}$ is log-convex. In addition, it
was extended to the much stronger properties $q$-log-concavity and
$q$-log-convexity, {\it i.e.,} for $n\geq m$,
\begin{eqnarray*}
A_n(x_1,\ldots,x_n)A_m(x_1,\ldots,x_m)-A_{n+1}(x_1,\ldots,x_{n+1})A_{m-1}(x_1,\ldots,x_{m-1})&\in& \mathbb{N}[\mathcal {X}],\\
P_{n+1}(x_1,\ldots,x_{n+1})P_{m-1}(x_1,\ldots,x_{m-1})-P_{n}(x_1,\ldots,x_n)P_{m}(x_1,\ldots,x_m)&\in&
\mathbb{N}[\mathcal {X}],
\end{eqnarray*}
where $\mathcal {X}=\{x_1,x_2,\ldots\} \bigcup
\{x_jx_k-x_{j-1}x_{k+1}:1\leq j\leq k\}$ and $\mathbb{N}[\mathcal
{X}]$ denotes the ring of polynomials in the indeterminates
$\mathcal {X}$ with nonnegative integer coefficients, see Bender and
Canfield \cite{BC96} and Schirmacher \cite{Sch99}.

The model in (\ref{CPI+function}) also has a probabilistic
interpretation from compound Poisson, see \cite{Mik09}. Its
log-concavity and log-convexity are very significant in probability
and statistics since they play a crucial role in the class of
infinitely divisible distributions, one of the most important
probability distributions in both theory and applications, see
\cite{Han88,HS88,Sat99}.

In addition, the sequences and functions in (\ref{CPI+function}) are
closely related to Sheffer polynomials \cite{All79,AV70,Shef39}. A
polynomial $S_n(q)=\sum_{k=0}^nS_{n,k}q^k$ is called {\it the
Sheffer polynomial} if
\begin{equation}\label{Shef+seq1}
\sum_{n\geq0}S_n(q)\frac{t^n}{n!}=g(t)\exp\left(f(t)q\right),
\end{equation}
where $g(0)f'(0)\neq0$ and $f(0)=0$ (cf. \cite{RR78,Rot75}). It was
proved that many polynomials including Bernoulli polynomials, Euler
polynomials, Hermite polynomials, Laguerre polynomials,
Poisson-Charlier polynomials and Meixner polynomials are all Sheffer
polynomials. Sheffer polynomials can be studied from umbral
calculus, which has been used in different areas of mathematics,
like approximation theory, analysis, combinatorics, statistics, and
so on. For more details on the theory of umbral calculus and Sheffer
polynomials, we refer the reader to \cite{RR78,RK73}.

Motivated by the $q$-log-concavity  of
$(A_{n}(x_1,x_2,\ldots,x_n))_{n\geq0}$ and $q$-log-convexity of
$(P_{n}(x_1,x_2,\ldots,x_n))_{n\geq0}$ in the first paragraph, the
aim of this paper is to extend them to total positivity of matrices
in terms of the Sheffer model in (\ref{Shef+seq1}). Let us recall
total positivity of matrices in the following.

\subsection{Total positivity}
Total positivity of matrices is an important and powerful concept
that arises often in various branches of mathematics, see the
monographs \cite{Kar68,Pin10} for more details.

Let $M=[m_{n,k}]_{n,k\ge 0}$ be a matrix of real numbers. It is {\it
totally positive} ({\it TP} for short) if all its minors are
nonnegative. It is {\it totally positive of order $r$} ({\it TP$_r$}
for short) if all minors of order $k\le r$ are nonnegative. For a
sequence $\alpha=(a_k)_{k\ge 0}$, denote its Toeplitz matrix by
$\Gamma(\alpha)=[a_{i-j}]_{i,j\ge 0}$ and  Hankel matrix by
$H(\alpha)=[a_{i+j}]_{i,j\ge 0},$ which play an important role in
different fields.

The sequence $\alpha$ is called a {\it P\'olya frequency} (PF, for
short) sequence if its infinite Toeplitz matrix $\Gamma(\alpha)$ is
TP. PF sequences have many nice properties (Karlin~\cite{Kar68}). In
classical analysis, PF sequences are closely related to real
rootedness of polynomials and entire functions. For example, the
fundamental representation theorem for PF sequences states that a
sequence $a_0=1,a_1,a_2,\ldots$ of real numbers is PF if and only if
its generating function has the form
$$\sum_{n\ge 0}a_nz^n
=\frac{\prod_{j\ge 1}(1+\alpha_jz)}{\prod_{j\ge
1}(1-\beta_jz)}e^{\gamma z}$$ in some open disk centered at the
origin, where $\alpha_j,\beta_j,\gamma\ge 0$ and $\sum_{j\ge
1}(\alpha_j+\beta_j)<+\infty$, see Karlin~\cite[p.~412]{Kar68} for
instance. In particular, a finite sequence of nonnegative numbers is
PF if and only if its generating function has only real zeros
(\cite[p.~399]{Kar68}). We also call the function $\sum_{n\ge
0}a_nz^n$ a PF (resp. PF$_r$) function if $\Gamma(\alpha)$ is TP
(resp. TP$_r$). The sequence $\alpha$ is called {\it log-concave} if
$a_{k-1}a_{k+1}\le a_k^2$ for all $k\ge 1$. Clearly, a sequence of
positive numbers is log-concave if and only if $\Gamma(\alpha)$ is
TP$_2$. Thus it is natural to consider a question whether results
for log-concavity can be extended to P\'olya frequency property. We
refer the reader to Br\"and\'en \cite{Bra06,Bra14},
Brenti~\cite{Bre89,Bre94,Bre95}, Wang-Yeh \cite{WYjcta05} and Zhu
\cite{Zhu14} on the PF property in combinatorics.

The sequence $\alpha$ is a {\it Stieltjes moment} ({\it SM} for
short) sequence if it has the form
\begin{equation}\label{i-e}
a_k=\int_0^{+\infty}x^kd\mu(x),
\end{equation}
where $\mu$ is a non-negative measure on $[0,+\infty)$ (see
\cite[Theorem 4.4]{Pin10} for instance). It is well known that
$\alpha$ is a Stieltjes moment sequence if and only if its Hankel
matrix $H(\alpha)$ is TP \cite{Pin10}. The Stieltjes moment problem
is one of classical moment problems and arises naturally in many
branches of mathematics \cite{ST43,Wid41}. The sequence $\alpha$ is
called {\it log-convex} if $a_{k-1}a_{k+1}\ge a_k^2$ for all $k\ge
1$. Clearly, a sequence of positive numbers is log-convex if and
only if $H(\alpha)$ is TP$_2$. In consequence, SM property is much
stronger than log-convexity. In addition, log-convexity of many
combinatorial sequences can be extended to SM property. We refer the
reader to Liu and Wang \cite{LW07} and Zhu \cite{Zhu13} for
log-convexity and Wang and Zhu \cite{WZ16} and
Zhu\cite{Zhu19,Zhu191} for SM property.

For brevity, we use the following notation
\begin{itemize}
  \item [\rm (i)] $\mathbb{R}$: the set of all real numbers; \quad $\mathbb{R}^{\geq0}:$ the set of all
nonnegative real numbers;
  \item [\rm (ii)] $\mathbb{N}^{+}:$ the set of all
positive integer numbers, and $\mathbb{N}=\mathbb{N}^{+}\cup\{0\}$.
\end{itemize}

If $\textbf{x}=(x_i)_{i\in{I}}$ is a set of indeterminates, we
denote by $\mathbb{R}[\textbf{x}]$ (resp.
$\mathbb{R}[[\textbf{x}]]$) the ring of polynomials (resp. formal
power series) in the indeterminates $\textbf{x}$ with coefficients
in $\mathbb{R}$. A matrix $M$ with entries in
$\mathbb{R}[\textbf{x}]$ is called {\it\textbf{x}-totally positive}
({\it \textbf{x}-TP} for short) if all its minors are polynomials
with nonnegative coefficients in the indeterminates $\textbf{x}$ and
is called {\it\textbf{x}-totally positive of order $r$} ({\it
\textbf{x}-TP$_r$} for short) if all its minors of order $k\le r$
are polynomials with nonnegative coefficients in the indeterminates
$\textbf{x}$. A polynomial sequence
$(\alpha_n(\textbf{x}))_{n\geq0}$ in $\mathbb{R}[\textbf{x}]$ is
called an {\it \textbf{x}-Stieltjes moment} ($\textbf{x}$-SM for
short) sequence (we also call it {\it coefficientwise Hankel-total
positive}, see \cite{PSZ18}) if its associated infinite Hankel
matrix is $\textbf{x}$-totally positive. It is called {\it
\textbf{x}-log-convex } ($\textbf{x}$-LCX for short) if
$$\alpha_{n+1}(\textbf{x})\alpha_{n-1}(\textbf{x})-\alpha_n(\textbf{x})^2\in\mathbb{R}^{\geq0}[\textbf{x}]$$
for all $n\in \mathbb{N}^{+}$ and is called {\it strongly
\textbf{x}-log-convex } ($\textbf{x}$-SLCX for short) if
$$\alpha_{n+1}(\textbf{x})\alpha_{m-1}(\textbf{x})-\alpha_n(\textbf{x})\alpha_m(\textbf{x})\in\mathbb{R}^{\geq0}[\textbf{x}]$$
for all $n\geq m\geq1$. Clearly, an $\textbf{x}$-SM sequence is both
$\textbf{x}$-SLCX and $\textbf{x}$-LCX. Define an operator $\mathcal
{L}$ by
$$\mathcal {L}[\alpha_i(\textbf{x})]:=\alpha_{i-1}(\textbf{x})\alpha_{i+1}(\textbf{x})-\alpha_i(\textbf{x})^2$$
for $i\in \mathbb{N}^{+}$. Then the $\textbf{x}$-log-convexity of
$(\alpha_i(\textbf{x}))_{i\geq 0}$ is equivalent to $\mathcal
{L}[\alpha_i(\textbf{x})]\in\mathbb{R}^{\geq0}[\textbf{x}]$ for all
$i\in \mathbb{N}^{+}$. In general, we say that
$(\alpha_i(\textbf{x}))_{i\geq 0}$ is {\it
$k$-\textbf{x}-log-convex} if $\mathcal
{L}^m[\alpha_i(\textbf{x})]\in\mathbb{R}^{\geq0}[\textbf{x}]$ for
all $m\leq k$, where $\mathcal {L}^m=\mathcal {L}(\mathcal
{L}^{m-1})$. The sequence $(\alpha_i(\textbf{x}))_{i\geq 0}$ is
called {\it infinitely \textbf{x}-log-convex} if it is
$k$-$\textbf{x}$-log-convex for every $k \in \mathbb{N}$. See
\cite{But90,Kra89,Le90} for $\textbf{x}$-log-concavity,
\cite{Bre95,Sag921} for $\textbf{x}$-PF,
\cite{CWY11,LW07,Zhu13,Zhu14,Zhu182} for $\textbf{x}$-LCX or
$\textbf{x}$-SLCX, and \cite{PS19,PSZ18,WZ16,Zhu19,Zhu20,Zhu202} for
$\textbf{x}$-SM.

In \cite{PSZ18,Zhu202}, the authors proved many results for
coefficientwise total positivity and coefficientwise Hankel-total
positivity. In a certain sense, this paper can be viewed as to keep
the study in \cite{PSZ18,Zhu202}. We will present our results in the
following.

\subsection{Extension of the cycle index polynomials}
As an extension of log-concavity of the cycle index polynomials, we
present a result for total positivity as follows.
\begin{thm}\label{thm+Toeplitz}
Let $g(t)\in \mathbb{R}[[t]]$ and
$\textbf{x}=(x_i)_{i\geq1}\subseteq\mathbb{R}$. Define a generalized
cycle index polynomial sequence $(\mathcal
{A}_n(\textbf{x}))_{n\geq0}$ by
\begin{equation}
\sum_{n\geq0}\mathcal
{A}_n(\textbf{x})t^n=g(t)\exp\left(\sum_{j\geq1}x_j\frac{t^j}{j}\right).
\end{equation}
Assume that $g(t)$ is a PF$_{r}$ function. If there exists a
nonnegative sequence $(\lambda_i)_{i\geq1}$ such that
$x_n=\sum_{i\geq1}\lambda_i^n$ for $n\geq1$, then the Toeplitz
matrix $[\mathcal {A}_{i-j}(\textbf{x})]_{i,j\geq0}$ is TP$_r$.
\end{thm}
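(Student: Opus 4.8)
The plan is to exploit the hypothesis $x_n=\sum_{i\ge1}\lambda_i^n$ to factor the exponential into an infinite product, and thereby exhibit $(\mathcal{A}_n(\textbf{x}))_{n\ge0}$ as a convolution of a PF$_r$ sequence with a PF sequence. Since $x_n$ is the $n$-th power sum of the nonnegative numbers $\lambda_i$ (and $x_1=\sum_i\lambda_i$ being a real number forces $\sum_i\lambda_i<\infty$), the classical exp--log identity for symmetric functions gives
\[
\sum_{j\ge1}x_j\frac{t^j}{j}=\sum_{i\ge1}\sum_{j\ge1}\frac{(\lambda_i t)^j}{j}=-\sum_{i\ge1}\log(1-\lambda_i t),
\]
so that
\[
\exp\Big(\sum_{j\ge1}x_j\tfrac{t^j}{j}\Big)=\prod_{i\ge1}\frac{1}{1-\lambda_i t}=\sum_{m\ge0}h_m(\lambda)\,t^m,
\]
where $h_m(\lambda)$ is the $m$-th complete homogeneous symmetric function. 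Writing $g(t)=\sum_{k\ge0}g_kt^k$, I then read off $\mathcal{A}_n(\textbf{x})=\sum_{k=0}^n g_k\,h_{n-k}(\lambda)$; that is, $(\mathcal{A}_n)$ is the convolution of $(g_k)$ with $(h_m(\lambda))$.

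Next I would pass to Toeplitz matrices. Because convolution of sequences corresponds to multiplication of their lower-triangular Toeplitz matrices, the factorization above yields $[\mathcal{A}_{i-j}(\textbf{x})]_{i,j\ge0}=\Gamma(g)\,\Gamma(h)$, where $\Gamma(h)=[h_{i-j}(\lambda)]_{i,j\ge0}$; this identity is entrywise exact and raises no convergence issue, since both factors are lower triangular and hence each entry of the product is a finite sum. By hypothesis $\Gamma(g)$ is TP$_r$. For $\Gamma(h)$, the generating function $\prod_{i\ge1}(1-\lambda_i t)^{-1}$ has exactly the shape in the fundamental representation theorem for PF sequences recalled above (all $\alpha_j=0$, $\gamma=0$, $\beta_j=\lambda_j\ge0$, and $\sum_j\lambda_j<\infty$); hence $(h_m(\lambda))_{m\ge0}$ is a PF sequence and $\Gamma(h)$ is TP.

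Finally I would conclude via Cauchy--Binet. Any minor of order $k\le r$ of $\Gamma(g)\Gamma(h)$ expands as $\sum_K\det\Gamma(g)[I,K]\cdot\det\Gamma(h)[K,J]$, where lower-triangularity of both factors leaves only finitely many nonzero summands, each a product of a $k\times k$ minor of the TP$_r$ matrix $\Gamma(g)$ with a $k\times k$ minor of the TP matrix $\Gamma(h)$, hence nonnegative; thus $[\mathcal{A}_{i-j}(\textbf{x})]$ is TP$_r$. The point that needs the most care is the management of the infinite product: to stay safe I would first prove the claim for the finite truncations $g(t)\prod_{i=1}^{N}(1-\lambda_i t)^{-1}$ — where $\Gamma(h)$ is transparently TP as a finite product of the TP Toeplitz matrices of the geometric sequences $(\lambda_i^m)_m$ — and then let $N\to\infty$, using that the coefficients converge and that the TP$_r$ condition (nonnegativity of each fixed minor) is closed under pointwise limits. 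It is this limiting step, rather than any single algebraic manipulation, that I expect to be the main obstacle.
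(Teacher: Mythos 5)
Your proposal is correct and takes essentially the same route as the paper: both reduce to the case $g=1$ by factoring the Toeplitz matrix as a product and applying Cauchy--Binet, and both identify the coefficients of $\exp\bigl(\sum_{j\geq1}x_j t^j/j\bigr)$ with the complete homogeneous symmetric functions $h_n(\lambda)$, whose generating function $\prod_{i\geq1}(1-\lambda_i t)^{-1}$ is PF by the fundamental representation theorem. The only cosmetic difference is that the paper makes this identification via the recurrence $n\mathcal{A}_n=\sum_{j=1}^n x_j\mathcal{A}_{n-j}$ (matching Newton's identity $nh_n=\sum_{j=1}^n p_j h_{n-j}$), whereas you compute the exponential directly by the exp--log identity; your closing truncation-and-limit argument is sound but becomes unnecessary once that representation theorem is invoked.
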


Let functions $f(t)\in\mathbb{R}[[t]]$ and $g(t)\in\mathbb{R}[[t]]$,
where $f(0)=0$ and $f'(0)g(0)\neq0$. Define {\it a
cycle-index-triangle} $\left[A_{n,k}(f,g)\right]_{n,k\geq0}$ and
{\it a Sheffer-triangle} $\left[S_{n,k}(f,g)\right]_{n,k\geq0}$ by
\begin{eqnarray}\label{f-Sheffer+function}
g(t)\exp\left(qf(t)\right):=1+\sum_{n\geq1}\left(\sum_{k=1}^nA_{n,k}(f,g)q^k\right)t^n=1+\sum_{n\geq1}\left(\sum_{k=1}^nS_{n,k}(f,g)q^k\right)\frac{t^n}{n!}.
\end{eqnarray}

Similarly, define a logarithmic triangle
$\left[L_{n,k}(f)\right]_{n,k\geq0}$ and a fractional triangle
$[\widetilde{L}_{n,k}(f)]_{n,k\geq0}$ by
\begin{eqnarray}
\log\left(1-qf(t)\right):&=&\sum_{n\geq1}-\left(\sum_{k=1}^nL_{n,k}(f)q^k\right)\frac{t^n}{n!},\\
\frac{1}{1-qf(t)}:&=&1+\sum_{n\geq1}\left(\sum_{k=1}^n\widetilde{L}_{n,k}(f)q^{k}\right)\frac{t^n}{n!}.
\end{eqnarray}

More and more combinatorial triangles are proved to be totally
positive, for example, the triangle $[A_{n,k}]$ in \cite{Bre95},
which satisfies the recurrence
$$A_{n,k}=x_nA_{n-1,k}+y_nA_{n-1-t,k-1}+z_nA_{n-t,k-1},$$ the Pascal
triangle \cite[p.137]{Kar68}, recursive matrices \cite{CLW15},
Riordan arrays \cite{CLW152,CW19}, the Jacobi-Stirling triangle
\cite{Monge12}, Delannoy-like triangles \cite{MZ16},
Catalan-Stieltjes matrices \cite{PZ16}, Narayana triangles of types
$A$ and $B$ \cite{WY18}, and the generalized Jacobi-Stirling
triangle \cite{Zhu14}. We refer the reader to \cite{Bre89,GV85,S90}
for more total positivity results in combinatorics. Let $\bar{f}(t)$
denote the compositional inverse of the function $f(t)$ (that is
$f(\bar{f}(t))=\bar{f}(f(t))=t$) and let $\bar{f}'(t)$ denote the
derivative function of $\bar{f}(t)$. We get the following result for
total positivity of triangular matrices.
\begin{thm}\label{thm+Sheffer+triangle}
Let $\{r_1,r_2\}\subseteq \mathbb{N}^{+}$ and $g(t)$ be a PF$_{r_1}$
function. If one of $f(t)$ and $1/\bar{f}'(t)$ is a PF$_{r_2}$
function and let $r=\min\{r_1,r_2\}$, then
\begin{itemize}
\item [\rm (i)]both
$\left[S_{n,k}(f,g)\right]_{n,k\geq0}$ and
$\left[S_{n,k}(f,g(f))\right]_{n,k\geq0}$ are TP$_r$;
\item [\rm (ii)]both
$\left[A_{n,k}(f,g)\right]_{n,k\geq0}$ and
$\left[A_{n,k}(f,g(f))\right]_{n,k\geq0}$ are TP$_r$;
\item [\rm (iii)]
$\left[L_{n,k}(f)\right]_{n,k\geq0}$ is TP$_r$;
\item [\rm (iv)]
$\left[\widetilde{L}_{n,k}(f)\right]_{n,k\geq0}$ is TP$_r$.
\end{itemize}

\end{thm}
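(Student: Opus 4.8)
The plan is to realize all four triangles as positive diagonal rescalings of exponential Riordan arrays and then to reduce their total positivity to two building blocks, each governed by one of the two hypotheses. First I would observe that the Sheffer triangle is literally an exponential Riordan array: expanding $g(t)\exp(qf(t))=\sum_k q^k g(t)f(t)^k/k!$ and comparing with (\ref{f-Sheffer+function}) gives $S_{n,k}(f,g)=n!\,[t^n]\,g(t)f(t)^k/k!$, so $[S_{n,k}(f,g)]_{n,k\ge0}=[g,f]$, the exponential Riordan array with $(n,k)$-entry $n![t^n]g(t)f(t)^k/k!$, and likewise $[S_{n,k}(f,g(f))]=[g(f),f]$. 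The remaining triangles are diagonal rescalings of these: from the defining series one reads off $A_{n,k}(\cdot)=S_{n,k}(\cdot)/n!$, while $\widetilde L_{n,k}(f)=k!\,([1,f])_{n,k}$ and $L_{n,k}(f)=(k-1)!\,([1,f])_{n,k}$ for $k\ge1$. Since one-sided multiplication (or conjugation) by a diagonal matrix with positive entries multiplies every minor by a positive scalar and hence preserves TP$_r$, items (ii)--(iv) follow once (i) and the total positivity of $[1,f]$ are established. Thus it suffices to prove that $[g,f]$ and $[g(f),f]$ are TP$_r$.

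Next I would factor these arrays via the exponential Riordan multiplication rule $[g_1,f_1][g_2,f_2]=[g_1\cdot(g_2\circ f_1),\,f_2\circ f_1]$, which gives $[g,f]=[g,t]\,[1,f]$ and $[g(f),f]=[1,f]\,[g,t]$. By the Cauchy--Binet formula a product of two TP$_r$ matrices is TP$_r$, so it is enough to treat the two factors. The factor $[g,t]$ has $(n,k)$-entry $\tfrac{n!}{k!}g_{n-k}$, i.e. $[g,t]=\operatorname{diag}(n!)\,\Gamma(g)\,\operatorname{diag}(1/k!)$, so by the rescaling remark it is TP$_{r_1}$ precisely because $g$ is a PF$_{r_1}$ function. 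Everything therefore reduces to the key assertion: \emph{$[1,f]$ is TP$_{r_2}$ whenever $f$ or $1/\bar f'$ is PF$_{r_2}$}; combining the two factors then yields the stated order $r=\min\{r_1,r_2\}$.

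For the key assertion I would split on the two hypotheses. If $1/\bar f'$ is PF$_{r_2}$ I would compute the production matrix $P$ of $[1,f]$ through the column relation $C_k'=\sum_j P_{j,k}C_j$, where $C_k=f^k/k!$. Writing $f'(t)=A(f(t))$ with $A(u)=f'(\bar f(u))=1/\bar f'(u)=\sum_{m\ge0}a_mu^m$, one finds $C_k'=A(f)\,f^{k-1}/(k-1)!$ and hence $P_{j,k}=\tfrac{j!}{(k-1)!}a_{j-k+1}$. After deleting the zero $0$th column, this is exactly $[A,t]=\operatorname{diag}(j!)\,\Gamma(A)\,\operatorname{diag}(1/k!)$, which is TP$_{r_2}$ because $A=1/\bar f'$ is PF$_{r_2}$. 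The standard criterion that a lower-triangular matrix whose production matrix is TP$_{r}$ is itself TP$_{r}$ then closes this case.

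The hypothesis that $f$ (rather than $1/\bar f'$) is PF$_{r_2}$ is the harder and genuinely different case, and I expect it to be the main obstacle. Here the production-matrix shortcut is unavailable: for $f=t+t^2$ one has $1/\bar f'=\sqrt{1+4u}$, whose coefficients alternate in sign, so $P$ is not totally positive even though $[1,f]$ is. I would instead pass to the ordinary Riordan array, using $([1,f])_{n,k}=\tfrac{n!}{k!}[t^n]f^k$, which exhibits $[1,f]$ as a two-sided positive-diagonal rescaling of $(1,f)_{\mathrm{ord}}=\big[[t^n]f^k\big]_{n,k}$; it then suffices to show $(1,f)_{\mathrm{ord}}$ is TP$_{r_2}$. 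Its $k$th column is $\Gamma(f)^k e_0$, the $k$-fold image of $e_0$ under the lower-triangular Toeplitz matrix $\Gamma(f)$ (ordinary multiplication by $f$), which is TP$_{r_2}$ because $f$ is PF$_{r_2}$. I would realize $(1,f)_{\mathrm{ord}}$ by a planar network: $\Gamma(f)$ is, up to the needed order, the weighted path matrix of a planar acyclic network $N$, and stacking copies of $N$ in layers with an absorbing bottom boundary produces a single planar network whose path matrix is $(1,f)_{\mathrm{ord}}$, the entry $[t^n]f^k$ counting weighted descents from height $n$ to $0$ in exactly $k$ layers. The Lindström--Gessel--Viennot lemma then gives nonnegativity of all minors of order $\le r_2$ (alternatively, one invokes the known total positivity of the ordinary Riordan array $(1,f)$ for PF $f$). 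Granting the key assertion, Cauchy--Binet applied to the two factors yields TP$_{\min\{r_1,r_2\}}$ for $[g,f]$ and $[g(f),f]$, and hence all of (i)--(iv).
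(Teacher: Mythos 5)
Your proposal follows the paper's route almost step for step: the identification $[S_{n,k}(f,g)]_{n,k}=(g(t),f(t))$ as an exponential Riordan array, the rescalings $A_{n,k}=S_{n,k}/n!$, $L_{n,k}(f)=(k-1)!\,S_{n,k}(f,1)$ and $\widetilde{L}_{n,k}(f)=k!\,S_{n,k}(f,1)$, the factorizations $(g,f)=(g,t)(1,f)$ and $(g(f),f)=(1,f)(g,t)$ combined with Cauchy--Binet (Lemma~\ref{lem+prod+TP}), the treatment of $(g,t)$ as a diagonal rescaling of the Toeplitz matrix $\Gamma(g)$, and, when $1/\bar{f}'$ is PF$_{r_2}$, the production-matrix computation $P=\Lambda\Gamma(A)\Theta\Lambda^{-1}$ with $A=1/\bar{f}'$ followed by the criterion of Lemma~\ref{lem+TP+production}. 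Your observation that for $f=t+t^2$ the production matrix acquires negative entries is a correct and explicit confirmation that the hypothesis ``$f$ is PF$_{r_2}$'' needs a genuinely separate argument, which is exactly why the paper splits into two cases.

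It is in that remaining case that your proposal has a real gap. The Lindstr\"om--Gessel--Viennot lemma certifies nonnegativity of \emph{all} minors of the path matrix of a planar acyclic network with nonnegative weights; such matrices are totally nonnegative of every order. Consequently there is no realization of $\Gamma(f)$ ``up to the needed order'' by a planar network when $r_2$ is finite: for instance $f(t)=t+t^2+t^3$ is PF$_2$ (nonnegative, log-concave, no internal zeros) but the minor of $\Gamma(f)$ on rows $2,3,4$ and columns $0,1,2$ equals $-1$, so $\Gamma(f)$ is not the path matrix of any such network, and no order-restricted realization theorem exists to fall back on. Your parenthetical alternative---invoking the known total positivity of ordinary Riordan arrays $(1,f)$ for PF $f$---has the same defect: that result concerns $r_2=\infty$, whereas the theorem is stated for finite $r_1,r_2\in\mathbb{N}^{+}$. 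The repair is the paper's elementary induction, which works order by order: writing $F_{n,k}=[t^n]f^k(t)$, the identity $F_{n,k}=\sum_{j=k}^{n}f_{n-j+1}F_{j-1,k-1}$ says that columns $1,\dots,n$ of $F$ equal $\Gamma(f(t)/t)$ times columns $0,\dots,n-1$ of $F$; since $\Gamma(f/t)$ is a submatrix of $\Gamma(f)$ and hence TP$_{r_2}$, Cauchy--Binet and induction on the number of columns show that $F$ is TP$_{r_2}$ for every finite $r_2$ (your planar-network picture is essentially the $r_2=\infty$ shadow of this argument). With that single substitution your proof is complete and coincides with the paper's.
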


Further, let functions
$f(t)=\sum_{i\geq1}f_i\frac{t^i}{i!}\in\mathbb{R}[[t]]$ and
$g(t)=\sum_{i\geq0}g_i\frac{t^i}{i!}\in\mathbb{R}[[t]]$, where
$f_1\neq0$ and $g_0\neq0$. For $n\geq1$, denote the Sheffer
polynomial by
$$S_n(f,g;q)=\sum_{k=1}^nS_{n,k}(f,g)q^k$$ and
$S_0(f,g;q)=1$. Similarly, define {\it a logarithmic polynomial}
$L_n(f;q)$  and {\it a fractional polynomial} $\widetilde{L}_n(f;q)$
by
\begin{eqnarray}
L_{n}(f;q):=\sum_{k=1}^nL_{n,k}(f)q^k,\quad
\widetilde{L}_{n}(f;q):=\sum_{k=1}^n\widetilde{L}_{n,k}(f)q^k
\end{eqnarray}
for $n\geq1$, where $L_{0}(f;q)=1$ and $\widetilde{L}_{0}(f;q)=1$.
For a polynomial $P_n(q)$ of degree $n$, we denote by
$P^*_n(q)=q^nP_{n}(1/q)$ its reciprocal polynomial.

For Hankel-total-positivity, we get the result as follows.
\begin{thm}\label{thm+Sheffer+polynomial}
Let $g(t)=\exp(\lambda f(t))$. If $1/\bar{f}'(t)$ is a PF$_{r}$
function, then
\begin{itemize}
 \item [\rm (i)]
 the Hankel matrices $[S_{i+j}(f,g;q)]_{i,j\geq0}$ and $[S_{i+j}^*(f,g;q)]_{i,j\geq0}$ is
 $(\lambda,q)$-TP$_r$;
 \item [\rm (ii)]
the Hankel matrix $[f_{i+j+1}]_{i,j\geq0}$ is TP$_r$.
 \end{itemize}
\end{thm}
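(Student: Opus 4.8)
The plan is to reduce Theorem~\ref{thm+Sheffer+polynomial} to the exponential Riordan array machinery developed earlier (in particular Theorem~\ref{thm+Sheffer+triangle}) together with the standard fact that Hankel-total-positivity of a moment sequence is controlled by a production (tridiagonal/Jacobi) matrix via a continued-fraction expansion. The key observation is that $g(t)=\exp(\lambda f(t))$ is precisely the generating function that makes the Sheffer triangle $[S_{n,k}(f,g)]$ into an exponential Riordan array whose row-generating polynomials $S_n(f,g;q)$ satisfy a three-term recurrence governed by $f$. So the proof should split into producing the right tridiagonal production matrix and then invoking a TP-preservation result for the map from the production matrix to the Hankel matrix.

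\medskip

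**First I would** set up the exponential Riordan array $\bigl[g,f\bigr]$ with $g=\exp(\lambda f)$ and identify its production matrix $J$. Because the column generating functions are $g(t)f(t)^k/k!$ and the extra factor $g=e^{\lambda f}$ shifts everything by $\lambda f$, the production matrix acquires a particularly clean tridiagonal (or at worst lower-Hessenberg that collapses to tridiagonal) shape, with the off-diagonal data read off from the coefficients of $f$ and the $\lambda$-dependence sitting linearly on the diagonal. Concretely, the row polynomials obey a recurrence of the form $S_{n+1}(f,g;q)=(\text{linear in }q\text{ and }\lambda)\,S_n+(\text{lower terms})$, and one checks that this is an orthogonal-polynomial-type (Favard) recurrence exactly when $1/\bar f'(t)$ is the relevant generating function. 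This is where the hypothesis that $1/\bar f'(t)$ is $\mathrm{PF}_r$ enters: its Taylor coefficients $f_{i}$ (equivalently the entries of the Hankel matrix $[f_{i+j+1}]$, which is part (ii)) supply the nonnegative continued-fraction coefficients.

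\medskip

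**Next I would** establish part (ii) first, as it is the analytic engine: the statement that $[f_{i+j+1}]_{i,j\ge0}$ is $\mathrm{TP}_r$ is equivalent to saying the shifted coefficient sequence of $f$ is a $\mathrm{TP}_r$ Hankel (i.e.\ $\mathrm{SM}_r$) sequence, and this should follow from the $\mathrm{PF}_r$ property of $1/\bar f'(t)$ by the compositional-inverse / Lagrange-inversion relationship between $f$ and $1/\bar f'$, exactly as in the triangle case of Theorem~\ref{thm+Sheffer+triangle}(i)--(iv). With the Hankel/production data in hand, part (i) follows by showing that the Jacobi matrix $J$ built above is $(\lambda,q)$-$\mathrm{TP}_r$, and then applying the general principle that if a production matrix is $\mathrm{TP}_r$ then the output sequence it generates has a $\mathrm{TP}_r$ Hankel matrix (the continued-fraction / LDU argument, coefficientwise in the indeterminates $\lambda$ and $q$). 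The reciprocal-polynomial statement for $S_n^*$ comes from the symmetry $q\mapsto 1/q$, which acts on the Hankel minors by a nonnegative rescaling and hence preserves $(\lambda,q)$-$\mathrm{TP}_r$.

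\medskip

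**The hard part will be** verifying that the production matrix is genuinely tridiagonal with the correct nonnegative (in $\lambda,q$) entries, rather than merely lower-triangular. The factor $g=e^{\lambda f}$ is what forces the collapse to a Jacobi form, and making this precise requires carefully differentiating the defining exponential generating function $g\exp(qf)$ with respect to $t$ and re-expressing $f'$ in terms of $f$ using the hypothesis on $1/\bar f'(t)$; controlling the $\lambda$-linear diagonal contribution and confirming it keeps every relevant minor coefficientwise nonnegative is the delicate step. Once the Jacobi structure and the nonnegativity of its entries (in $\lambda$ and $q$) are secured, the passage to $(\lambda,q)$-$\mathrm{TP}_r$ of the Hankel matrix is a direct application of the tridiagonal-production-matrix criterion, and part (ii) drops out as the $\lambda=q\to 0$ shadow of the same data.
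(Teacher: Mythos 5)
There is a genuine gap, and it sits exactly where you placed your bet. Your proof hinges on the production matrix of $(\exp(\lambda f(t)),f(t))$ being tridiagonal (Jacobi), so that the row polynomials satisfy a Favard-type three-term recurrence; you even flag "verifying that the production matrix is genuinely tridiagonal" as the hard part. But that verification fails: by Proposition \ref{prop+TP+production}, this array has $Z(t)=\lambda/\bar{f}'(t)$ and $A(t)=1/\bar{f}'(t)$, so whenever $1/\bar{f}'(t)$ has infinitely many nonzero Taylor coefficients (as in all the Bessel/Lah applications in Section 4, e.g.\ $1/\bar f'(t)=c\,(1+\tfrac{b}{c}t)^{a+1}$ or $c/(1-\tfrac{b}{c}t)^{a-1}$ with $a>1$), the production matrix is a \emph{full} lower-Hessenberg matrix, not tridiagonal, and the row polynomials obey a long recurrence, not a three-term one. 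The paper never needs tridiagonality: it writes the Hessenberg production matrix as the product
\begin{equation*}
P \;=\; \Lambda\,\Gamma\!\left(1/\bar{f}'\right) J\,\Lambda^{-1},
\end{equation*}
where $\Gamma(1/\bar f')$ is the Toeplitz matrix of the PF$_r$ function (TP$_r$ by hypothesis), $J$ is the \emph{upper-bidiagonal} matrix with diagonal $\lambda$ and superdiagonal $1,2,3,\dots$ (hence $\lambda$-TP), and $\Lambda=\mathrm{diag}(0!,1!,2!,\dots)$; Cauchy--Binet (Lemma \ref{lem+prod+TP}) then gives $P$ is $\lambda$-TP$_r$, and Lemma \ref{lem+TP+production} — which applies to arbitrary TP$_r$ production matrices, with no tridiagonality assumption — yields the Hankel-total positivity of the zeroth column. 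You also miss the reduction that makes the two-variable claim tractable: since $g(t)\exp(qf(t))=\exp((\lambda+q)f(t))$, the $(\lambda,q)$-statement follows from the one-variable $\lambda$-statement by the substitution $\lambda\to\lambda+q$; without this, "controlling the $\lambda$-linear diagonal contribution" coefficientwise in two indeterminates is not addressed by anything you wrote.

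Part (ii) is also not established by your argument, and your proposed ordering is backwards. You want to prove (ii) first "as the analytic engine" supplying nonnegative continued-fraction coefficients, but the input the Hankel argument actually needs is the Toeplitz TP$_r$ of $1/\bar f'(t)$ — which is the hypothesis itself, not the Hankel matrix $[f_{i+j+1}]$. In the paper, (ii) is a \emph{consequence} of (i): differentiating the EGF gives
\begin{equation*}
f'(t)\exp(\lambda f(t))=\sum_{n\geq0}\frac{E_{n+1}(f;\lambda,0)}{\lambda}\,\frac{t^n}{n!},
\end{equation*}
the shifted Hankel matrix $[E_{i+j+1}(f;\lambda,0)]$ is a submatrix of the full one (hence $\lambda$-TP$_r$), each entry is divisible by $\lambda$ so every $k\times k$ minor is divisible by $\lambda^k$, and evaluating at $\lambda=0$ (picking out constant terms of nonnegative-coefficient polynomials) gives that $[f_{i+j+1}]_{i,j\geq0}$ is TP$_r$. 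Your appeal to "the compositional-inverse / Lagrange-inversion relationship" names no such mechanism. The one piece of your proposal that does survive contact with the paper is the treatment of $S_n^*$: the symmetry $q\mapsto 1/q$ with monomial rescaling of minors is exactly Proposition \ref{prop+recp}, though to make it rigorous you must also check the degree bound on the minors (degree at most $i_1+\cdots+i_k+j_1+\cdots+j_k$), which is what guarantees the rescaled minors are again polynomials with nonnegative coefficients.
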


Let $A=[a_{n,k}]_{n,k\ge 0}$ be an infinite matrix.  For $n\in
\mathbb{N}$, define the $A$-convolution
\begin{eqnarray}\label{a-c}
z_n=\sum_{k=0}^{n}a_{nk}x_ky_{n-k}.
\end{eqnarray}
We say that \eqref{a-c} preserves the SM property: if both
$(x_n)_{n\ge 0}$ and $(y_n)_{n\ge 0}$ are Stieltjes moment
sequences, then so is $(z_n)_{n\ge 0}$.

Using positive definiteness of the quadratic form in linear algebra,
P\'olya and Szeg\"o~\cite[Part VII, Theorem 42]{PS64} proved for
$n\in \mathbb{N}$ that the binomial convolution
$$z_n=\sum_{k=0}^{n}\binom{n}{k}x_ky_{n-k}$$
preserves the SM property in $\mathbb{R}$. Recently, Wang and the
author \cite{WZ16} got more triangular convolutions including the
Stirling convolution of the second kind, the Eulerian convolution
and so on, preserving the SM property. In addition, the next
sufficient condition for the triangular convolution preserving the
SM property was provided in \cite{WZ16}.

\begin{lem}\label{lem+conv}\emph{\cite{WZ16}}
For $n\in \mathbb{N},$ let $A_n(q)=\sum_{k=0}^{n}a_{n,k}q^k$ be the
$n$th row generating function of the matrix $A=[a_{n,k}]_{n,k}$.
Assume that $(A_n(q))_{n\ge 0}$ is a Stieltjes moment sequence for
any fixed $q\ge 0$. Then the $A$-convolution \eqref{a-c} preserves
the SM property.
\end{lem}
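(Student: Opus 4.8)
The plan is to exploit the integral representations furnished by the Stieltjes moment hypotheses and to reduce everything to positive semidefiniteness, which—unlike total positivity—is preserved under integration. Since $(x_n)_{n\ge 0}$ and $(y_n)_{n\ge 0}$ are Stieltjes moment sequences, there are nonnegative measures $\mu,\nu$ on $[0,+\infty)$ with $x_k=\int_0^{+\infty}s^k\,d\mu(s)$ and $y_m=\int_0^{+\infty}t^m\,d\nu(t)$. Substituting these into \eqref{a-c} and interchanging the finite sum with the integrals gives
\begin{equation*}
z_n=\int_0^{+\infty}\!\!\int_0^{+\infty}B_n(s,t)\,d\mu(s)\,d\nu(t),\qquad B_n(s,t):=\sum_{k=0}^{n}a_{n,k}s^kt^{n-k},
\end{equation*}
so that $B_n(s,t)=t^nA_n(s/t)$ for $t>0$ and $B_n(s,0)=a_{n,n}s^n$.

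The key observation is that a real sequence $\alpha=(\alpha_n)_{n\ge0}$ is a Stieltjes moment sequence if and only if both Hankel matrices $[\alpha_{i+j}]_{i,j\ge0}$ and $[\alpha_{i+j+1}]_{i,j\ge0}$ are positive semidefinite, and that the positive semidefinite cone is closed under nonnegative integration. First I would fix $(s,t)$ with $t>0$ and an order $m\ge 0$, and write the truncated Hankel matrix of $(B_n(s,t))_n$ as $[B_{i+j}(s,t)]_{0\le i,j\le m}=D\,[A_{i+j}(s/t)]_{0\le i,j\le m}\,D$, where $D=\mathrm{diag}(1,t,\dots,t^m)$. Since $(A_n(q))_{n\ge 0}$ is a Stieltjes (hence Hamburger) moment sequence for the fixed value $q=s/t\ge 0$, the matrix $[A_{i+j}(s/t)]$ is positive semidefinite, and conjugation by the nonnegative diagonal matrix $D$ preserves this; the identical computation with an extra scalar factor $t\ge 0$ shows that the shifted matrix $[B_{i+j+1}(s,t)]_{0\le i,j\le m}$ is positive semidefinite as well. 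Each entry of these truncated matrices is continuous in $(s,t)$ and the positive semidefinite cone is closed, so letting $t\to 0^{+}$ extends both statements to the boundary $t=0$; thus $[B_{i+j}(s,t)]$ and $[B_{i+j+1}(s,t)]$ are positive semidefinite for every $(s,t)\in[0,+\infty)^2$.

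Finally I would integrate. For any finitely supported real vector $(c_i)$ and any $m$,
\begin{equation*}
\sum_{i,j=0}^{m}c_ic_jz_{i+j}=\int_0^{+\infty}\!\!\int_0^{+\infty}\Bigl(\sum_{i,j=0}^{m}c_ic_jB_{i+j}(s,t)\Bigr)d\mu(s)\,d\nu(t)\ge 0,
\end{equation*}
because the inner quadratic form is nonnegative by the previous step, and likewise $\sum_{i,j}c_ic_jz_{i+j+1}\ge 0$. Hence both Hankel matrices $[z_{i+j}]$ and $[z_{i+j+1}]$ are positive semidefinite, so $(z_n)_{n\ge 0}$ is a Stieltjes moment sequence. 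The main conceptual point—and the step I would be most careful about—is to work with positive semidefiniteness rather than full total positivity: sums and integrals of totally positive matrices need not be totally positive, whereas positive semidefiniteness passes through the integral, and for Hankel matrices it is exactly equivalent to the Stieltjes moment property. The only genuine technicalities are the boundary case $t=0$, handled above by continuity, and the measurability required for the integral exchanges, which is automatic here since each $z_{i+j}$ is finite and all the sums involved are finite.
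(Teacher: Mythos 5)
Your proof is correct, but note that the paper itself offers no proof of this lemma: it is quoted from \cite{WZ16}, so the only comparison available is with that reference, and your argument is in substance the same as the one used there. You pass to the integral representations of $(x_n)_{n\ge0}$ and $(y_n)_{n\ge0}$, identify $B_n(s,t)=t^nA_n(s/t)$ under the double integral, and then invoke the characterization of Stieltjes moment sequences by positive semidefiniteness of the two Hankel forms $[\alpha_{i+j}]$ and $[\alpha_{i+j+1}]$, which---unlike total positivity---is preserved by nonnegative integration; this is precisely the quadratic-form device that P\'olya and Szeg\"o used for the binomial convolution and that \cite{WZ16} extends, as the paper remarks just before stating the lemma. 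Your explicit handling of the boundary case $t=0$ (continuity of the entries plus closedness of the positive semidefinite cone) and the diagonal congruence $[B_{i+j}(s,t)]=D\,[A_{i+j}(s/t)]\,D$ are exactly the details that make the reduction rigorous, so nothing is missing.
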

Clearly, if the sequence $(A_n(q))_{n\geq0}$ is $q$-SM, then
$(A_n(q))_{n\ge 0}$ is a Stieltjes moment sequence for any fixed
$q\ge 0$. In addition, it also implies $3$-$q$-log-convexity of
$(A_n(q))_{n\geq0}$ in terms of the next result.

\begin{lem}\emph{\cite{Zhu18,Zhu202}}\label{lem+3-q-log-convex}
For a polynomial sequence $(A_n(\textbf{x}))_{n\geq0}$, if the
Hankel matrix $[A_{i+j}(\textbf{x})]_{i,j\geq0}$ is
$\textbf{x}$-TP$_4$, then $(A_n(\textbf{x}))_{n\geq0}$ is
$3$-$\textbf{x}$-log-convex.
\end{lem}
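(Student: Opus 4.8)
The plan is to reduce the entire statement to the single algebraic engine behind Hankel determinants, namely the Desnanot--Jacobi (Dodgson condensation) identity, and to exhibit each iterate $\mathcal{L}^m[A_i]$ ($m=1,2,3$) explicitly as a sum of products of \emph{contiguous} Hankel minors of order at most $4$. Write $H_k(i)=\det[A_{i+p+q}(\textbf{x})]_{0\le p,q\le k-1}$ for the $k\times k$ Hankel minor of $[A_{i+j}(\textbf{x})]_{i,j\ge0}$ based at index $i$, with the conventions $H_0(i)=1$ and $H_1(i)=A_i(\textbf{x})$. Applying Desnanot--Jacobi to the $(k+1)\times(k+1)$ contiguous Hankel block, and using that every relevant deletion of an extreme row and column again yields a contiguous Hankel minor (by symmetry of the Hankel matrix), gives the key recurrence
\[
H_k(i)\,H_k(i+2)-H_k(i+1)^2=H_{k+1}(i)\,H_{k-1}(i+2).
\]
This is exactly $\mathcal{L}$ applied to the shifted sequence $\bigl(H_k(j)\bigr)_j$, and it is the only nontrivial identity the proof needs.

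Next I would run the iteration. For $m=1$, the recurrence with $k=1$ gives $\mathcal{L}[A_i]=A_{i-1}A_{i+1}-A_i^2=H_2(i-1)$, which lies in $\mathbb{R}^{\geq0}[\textbf{x}]$ by TP$_2$. For $m=2$, since $\mathcal{L}^2[A_i]=H_2(i-2)H_2(i)-H_2(i-1)^2$, the recurrence with $k=2$ collapses this to the product $\mathcal{L}^2[A_i]=A_i\,H_3(i-2)$ of a $1\times1$ and a $3\times3$ Hankel minor; nonnegativity of its coefficients then follows from TP$_3$ together with the fact that $\mathbb{R}^{\geq0}[\textbf{x}]$ is closed under products. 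These two cases already give $2$-$\textbf{x}$-log-convexity and set the template for the last case.

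The crux is $m=3$, and this is the step I expect to be the main obstacle, precisely because $\mathcal{L}^2[A_i]=A_i\,H_3(i-2)$ is a \emph{product} rather than a single determinant, so a further application of $\mathcal{L}$ does not reduce to one instance of Desnanot--Jacobi. The device is to expand
\[
\mathcal{L}^3[A_i]=A_{i-1}A_{i+1}\,H_3(i-3)H_3(i-1)-A_i^2\,H_3(i-2)^2
\]
and then eliminate the mixed quadratic terms by substituting the recurrence at $k=3$ (base index $i-3$), which gives $H_3(i-3)H_3(i-1)=H_3(i-2)^2+H_4(i-3)H_2(i-1)$, and at $k=1$ (base index $i-1$), which gives $A_{i-1}A_{i+1}=A_i^2+H_2(i-1)$. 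After the cancellation of the two $A_i^2H_3(i-2)^2$ contributions one is left with the clean identity
\[
\mathcal{L}^3[A_i]=H_2(i-1)\Bigl[H_3(i-2)^2+A_{i-1}A_{i+1}\,H_4(i-3)\Bigr].
\]

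Finally I would read off the conclusion. Every factor on the right-hand side is a contiguous Hankel minor of order at most $4$: the terms $A_{i-1},A_{i+1}$ are $1\times1$ minors, $H_2(i-1)$ is a $2\times2$ minor, $H_3(i-2)^2$ is the square of a $3\times3$ minor, and $H_4(i-3)$ is a $4\times4$ minor. Hence, under the hypothesis that $[A_{i+j}(\textbf{x})]_{i,j\ge0}$ is $\textbf{x}$-TP$_4$, each factor lies in $\mathbb{R}^{\geq0}[\textbf{x}]$, and since this set is closed under sums and products we obtain $\mathcal{L}^3[A_i]\in\mathbb{R}^{\geq0}[\textbf{x}]$. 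Together with the $m=1,2$ cases above, this yields $\mathcal{L}^m[A_i]\in\mathbb{R}^{\geq0}[\textbf{x}]$ for all $m\le3$, which is precisely $3$-$\textbf{x}$-log-convexity of $(A_n(\textbf{x}))_{n\geq0}$. I would only need to remark that the index shifts require $i\ge3$, the boundary cases being vacuous or immediate.
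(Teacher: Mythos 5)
Your proof is correct: the Desnanot--Jacobi recurrence $H_k(i)H_k(i+2)-H_k(i+1)^2=H_{k+1}(i)H_{k-1}(i+2)$ is valid for contiguous Hankel minors, and the resulting identities $\mathcal{L}[A_i]=H_2(i-1)$, $\mathcal{L}^2[A_i]=A_i\,H_3(i-2)$ and $\mathcal{L}^3[A_i]=H_2(i-1)\bigl[H_3(i-2)^2+A_{i-1}A_{i+1}H_4(i-3)\bigr]$ all check out, with every factor a contiguous minor of order at most $4$. Note that the paper itself gives no proof of this lemma (it is quoted from \cite{Zhu18,Zhu202}), but your argument---Dodgson condensation expressing the iterates of $\mathcal{L}$ as polynomials with nonnegative coefficients in contiguous Hankel minors---is essentially the approach of those cited references.
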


Then we have the following results.
\begin{thm}\label{thm+Sheffer+convolution}
Let $g(t)=\exp(\lambda f(t))$. If $1/\bar{f}'(t)$ is a PF function,
then we have
\begin{itemize}
 \item [\rm (i)]
 both $(S_{n}(f,g;q))_{n\geq0}$ and $(S^*_{n}(f,g;q))_{n\geq0}$ are $(\lambda,q)$-SM and
 $3$-$(\lambda,q)$-log-convex;
 \item [\rm (ii)]
 $(f_{n+1})_{n\geq0}$ is SM and $3$-log-convex;
 \item [\rm (iii)]
$[S_{n,k}(f,g)]_{n,k}$ is $\lambda$-TP;
\item [\rm (iv)] all triangular matrices $[L_{n,k}(f)]_{n,k}$,
$[\widetilde{L}_{n,k}(f)]_{n,k}$,
$[(-1)^{n-k}S_{n,k}(\bar{f},1)]_{n,k},[(-1)^{n-k}L_{n,k}(\bar{f})]_{n,k},$
and $[(-1)^{n-k}\widetilde{L}_{n,k}(\bar{f})]_{n,k}$ are TP;
\item [\rm (v)]
the $A$-convolution preserves the SM property if $A$ is any of
triangles $[S_{n,k}(f,g)]_{n,k}$ with $\lambda\geq0$,
$[S_{n+1,k+1}(f,1)]_{n,k}$ and $[\widetilde{L}_{n,k}(f)]_{n,k}$;
\item [\rm (vi)]
the $m$-branched Stieltjes-type continued fractions
\begin{eqnarray*}
   \sum_{n\geq0}S_{n}(f,g;q)t^n
   & = &
   \cfrac{1}
         {1 \,-\, \alpha_{m} t
            \prod\limits_{i_1=1}^{m}
                 \cfrac{1}
            {1 \,-\, \alpha_{m+i_1} t
               \prod\limits_{i_2=1}^{m}
               \cfrac{1}
            {1 \,-\, \alpha_{m+i_1+i_2} t
               \prod\limits_{i_3=1}^{m}
               \cfrac{1}{1 - \cdots}
            }
           }
         }
%
\end{eqnarray*}
and \begin{eqnarray*}
   \sum_{n\geq0}S^*_{n}(f,g;q)t^n
   & = &
   \cfrac{1}
         {1 \,-\, \beta_{m} t
            \prod\limits_{i_1=1}^{m}
                 \cfrac{1}
            {1 \,-\, \beta_{m+i_1} t
               \prod\limits_{i_2=1}^{m}
               \cfrac{1}
            {1 \,-\, \beta_{m+i_1+i_2} t
               \prod\limits_{i_3=1}^{m}
               \cfrac{1}{1 - \cdots}
            }
           }
         }
%
\end{eqnarray*} with coefficients
\begin{eqnarray*}
(\alpha_{i})_{i\geq
m}&=&(\lambda+q,\underbrace{x_1,\ldots,x_m}_{m},\lambda+q,\underbrace{2x_1,\ldots,2x_m}_{m},\lambda+q,\underbrace{3x_1,\ldots,3x_m}_{m},\ldots),\\
(\beta_{i})_{i\geq
m}&=&(q\lambda+1,\underbrace{qx_1,\ldots,qx_m}_{m},q\lambda+1,\underbrace{2qx_1,\ldots,2qx_m}_{m},q\lambda+1,\underbrace{3qx_1,\ldots,3qx_m}_{m},\ldots)
\end{eqnarray*}
if $1/\bar{f}'(t)=\prod_{i=1}^m(1+x_it)$.
 \end{itemize}
\end{thm}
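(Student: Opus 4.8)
The plan is to derive all six items from the results already in hand together with the exponential Riordan array calculus. Items (i) and (ii) are immediate: a PF function is PF$_r$ for every $r$, so applying Theorem~\ref{thm+Sheffer+polynomial} with $r=\infty$ shows that $[S_{i+j}(f,g;q)]_{i,j}$ and $[S^*_{i+j}(f,g;q)]_{i,j}$ are $(\lambda,q)$-TP and that $[f_{i+j+1}]_{i,j}$ is TP. By the definition of Hankel-total-positivity these are exactly the $(\lambda,q)$-SM and SM statements. Since total positivity entails TP$_4$, Lemma~\ref{lem+3-q-log-convex} then yields the asserted $3$-$(\lambda,q)$-log-convexity and $3$-log-convexity.

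For (iii) I would use $g(t)\exp(qf(t))=\exp((\lambda+q)f(t))$, so that $(\lambda+q)^j=\sum_k\binom{j}{k}\lambda^{j-k}q^k$ gives the matrix factorization $[S_{n,k}(f,g)]=[S_{n,k}(f,1)]\,[\binom{n}{k}\lambda^{n-k}]$. The first factor is TP by Theorem~\ref{thm+Sheffer+triangle} with $g=1$ and $r=\infty$, the binomial factor $[\binom{n}{k}\lambda^{n-k}]$ is $\lambda$-TP, and a coefficientwise Cauchy--Binet argument passes $\lambda$-TP to the product. In (iv) the triangles $[L_{n,k}(f)]$ and $[\widetilde L_{n,k}(f)]$ are covered by Theorem~\ref{thm+Sheffer+triangle}(iii),(iv) with $r=\infty$; indeed $L_{n,k}(f)=(k-1)!\,S_{n,k}(f,1)$ and $\widetilde L_{n,k}(f)=k!\,S_{n,k}(f,1)$, so each equals $[S_{n,k}(f,1)]$ up to a positive diagonal. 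The genuinely new point is the three signed arrays built from $\bar f$. Writing $D=\mathrm{diag}((-1)^n)$, one checks that $[(-1)^{n-k}S_{n,k}(\bar f,1)]=D[S_{n,k}(f,1)]^{-1}D$, and the two $\bar f$-analogues equal this same matrix times a positive diagonal. Thus everything reduces to the lemma that $DA^{-1}D$ is TP whenever $A$ is a lower-triangular TP matrix with positive diagonal, which I would prove on finite truncations via the Loewner--Whitney factorization $A=\prod_j E_{i_j}(c_j)$ into nonnegative elementary bidiagonals $E_i(c)=I+c\,e_{i,i-1}$: since $D\,E_i(-c)\,D=E_i(c)$, one gets $DA^{-1}D=\prod_{\mathrm{rev}}E_{i_j}(c_j)$, a product of TP factors. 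I expect this signed-inverse step to be the main obstacle of the whole theorem.

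For (v), by Lemma~\ref{lem+conv} it suffices to show that for each listed triangle the $n$th row generating function in $q$ is, for every fixed $q\ge0$, a Stieltjes moment sequence in $n$. For $[S_{n,k}(f,g)]$ with $\lambda\ge0$ this is (i) evaluated at fixed $(\lambda,q)$. For $[S_{n+1,k+1}(f,1)]$ the row generating function is $q^{-1}S_{n+1}(f,1;q)$, which is SM as a forward shift and positive rescaling of the SM sequence $(S_n(f,1;q))_n$ from (i) with $\lambda=0$. For $[\widetilde L_{n,k}(f)]$, the identity $k!\,q^k=\int_0^\infty e^{-s}(qs)^k\,ds$ gives $\widetilde L_n(f;q)=\int_0^\infty e^{-s}S_n(f,1;qs)\,ds$, a nonnegative mixture of the SM sequences $(S_n(f,1;qs))_n$, hence SM. Lemma~\ref{lem+conv} then delivers the three convolution statements.

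Finally, for (vi) I would compute the production matrix $P$ of the exponential Riordan array whose $0$th column is $(S_n(f,g;q))_n$, namely the array with column $k$ carrying e.g.f. $\exp((\lambda+q)f(t))\,f(t)^k/k!$. Its bivariate generating function is $e^{yt}\big(c(t)+y\,r(t)\big)$ with $r(t)=f'(\bar f(t))=1/\bar f'(t)=\prod_{i=1}^m(1+x_it)$ and $c(t)=(\lambda+q)f'(\bar f(t))=(\lambda+q)\prod_{i=1}^m(1+x_it)$. Because $\prod(1+x_it)$ has degree $m$, $P$ is a lower-Hessenberg Toeplitz-type matrix with a single superdiagonal and lower bandwidth $m$, which is exactly the shape that the theory of branched continued fractions associates to an $m$-branched Stieltjes-type fraction; since the $0$th column satisfies $S_n=(P^n)_{0,0}$, the ordinary generating function $\sum_n S_n(f,g;q)t^n$ is that fraction. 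Contracting $P$ produces the coefficients $(\alpha_i)$, the interleaved blocks $\lambda+q,\,jx_1,\dots,jx_m$ arising respectively from $c$ and from $r$ rescaled by the factorial/row factors, and the reciprocal case follows by applying $q\mapsto1/q$ together with the homogenization defining $S^*_n$, converting $(\alpha_i)$ into $(\beta_i)$. Here the delicate point is matching the production-matrix entries to the precise normalization of the branched-fraction coefficients.
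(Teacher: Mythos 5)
Your items (i)--(iii) follow the paper's own route: (i) and (ii) are Theorem \ref{thm+Sheffer+polynomial} plus Lemma \ref{lem+3-q-log-convex}, and your factorization $[S_{n,k}(f,g)]=[S_{n,k}(f,1)]\,[\binom{n}{k}\lambda^{n-k}]$ in (iii) is exactly the decomposition inside the proof of Theorem \ref{thm+ERA+TP}, which the paper simply cites. In (iv) and (v) you take genuinely different, and correct, routes. For the signed triangles in (iv) the paper invokes the known fact that the signless inverse of a TP matrix is TP (citing Pinkus), while you reprove it: the identity $[(-1)^{n-k}S_{n,k}(\bar{f},1)]=D[S_{n,k}(f,1)]^{-1}D$ is right, and the Loewner--Whitney argument is legitimate on leading principal truncations because, for lower-triangular matrices, the truncation of the inverse is the inverse of the truncation; this makes the step self-contained. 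For (v) the paper absorbs the $k!$ of $[\widetilde{L}_{n,k}(f)]$ into the input sequence (setting $w_k=k!x_k$, SM since Hadamard products of SM sequences are SM) and handles $[S_{n+1,k+1}(f,1)]$ via Theorem \ref{thm+Hankel+Rior}(v) (the array $(f'(t),f(t))$); your Laplace-mixture identity $\widetilde{L}_n(f;q)=\int_0^\infty e^{-s}S_n(f,1;qs)\,ds$ and your shift argument for $q^{-1}S_{n+1}(f,1;q)$ achieve the same, provided you justify that nonnegative mixtures of SM sequences are SM (e.g.\ via positive semidefiniteness of the two Hankel forms, which is preserved under integration) and treat $q=0$ separately, where the rows reduce to $(f_{n+1})_{n\geq0}$, i.e.\ item (ii).

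The one genuine gap is in (vi). You identify the correct production matrix ($Z(t)=(\lambda+q)\prod_i(1+x_it)$, $A(t)=\prod_i(1+x_it)$) and the correct coefficients, but the inference that $P$ being banded lower-Hessenberg with one superdiagonal and lower bandwidth $m$ ``is exactly the shape'' corresponding to an $m$-branched Stieltjes fraction is false as a general principle: in \cite{PSZ18} a banded lower-Hessenberg production matrix corresponds to a J-type (Thron--Rogers) object, and the S-type correspondence (Propositions 7.2 and 8.2(b) there) requires $P$ to equal a specific product of $m$ nonnegative lower-bidiagonal matrices times one upper-bidiagonal matrix, with the $\alpha_i$ interleaved along those factors. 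So the ``delicate point'' you defer is not a normalization check; it is the entire content of the step. The paper supplies it as Theorem \ref{thm+Hakel}(v): taking $\varphi(t)=\prod_{i=1}^m(1+x_it)$, $\nu=\lambda+q$, $a=1$, $\omega=b=c=0$, one has the explicit factorization
\[
P=\left(\prod_{i=1}^m\Lambda\,\Gamma(1+x_it)\,\Lambda^{-1}\right)U,
\]
where each $\Lambda\Gamma(1+x_it)\Lambda^{-1}$ is lower bidiagonal with unit diagonal and subdiagonal $(x_i,2x_i,3x_i,\ldots)$, and $U$ is upper bidiagonal with constant diagonal $\lambda+q$ and superdiagonal $1$; only after exhibiting this factorization do the cited propositions of \cite{PSZ18} produce the branched S-fraction. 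Since Theorem \ref{thm+Hakel}(v) is already proved earlier in the paper, the clean repair is to cite it directly with these parameter values (which is what the paper does); once that is in place, your substitution $q\to 1/q$, $t\to qt$ for the reciprocal polynomials does convert $(\alpha_i)$ into $(\beta_i)$ as claimed.
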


\begin{thm}\label{thm+Sheffer+convolution+2}
If $1/[(1+t)\bar{f}'(t)]$ is a PF function and
$g(t)=(1+f(t))^{\gamma}\exp(\lambda f(t))$, then we have
\begin{itemize}
 \item [\rm (i)]
both $(S_{n}(f,g;q))_{n\geq0}$ and $(S^*_{n}(f,g;q))_{n\geq0}$ are
$(\gamma,\lambda,q)$-SM and $3$-$(\gamma,\lambda,q)$-log-convex;
\item [\rm (ii)]
$(S_{n}(f,(1+f)^{\gamma};q))_{n\geq0}$ is $(\gamma,q)$-SM and
$3$-$(\gamma,q)$-log-convex;
 \item [\rm (iii)]
$(f_{n+1})_{n\geq0}$ is SM and $3$-log-convex;
 \item [\rm (iv)]
$[S_{n,k}(f,g)]_{n,k}$ is $(\gamma,\lambda)$-TP;
\item [\rm (v)]
$[(-1)^{n-k}S_{n,k}(\bar{f},1/g(\bar{f}))]_{n,k}$ is TP for
$\gamma\geq0$ and $\lambda\geq0$;
\item [\rm (vi)]
the convolution $z_n=\sum_{k=0}^{n}S_{n,k}(f,g)x_ky_{n-k}$ preserves
the SM property for $\gamma\geq0$ and $\lambda\geq0$.
 \end{itemize}
\end{thm}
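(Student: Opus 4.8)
The plan is to treat Theorem~\ref{thm+Sheffer+convolution+2} as the $(1+f)^\gamma$-deformation of Theorem~\ref{thm+Sheffer+convolution} and to run the same exponential Riordan array machinery, the only new feature being one extra term in the $Z$-function. Write $L=[g,f]$ for the exponential Riordan array with $g(t)=(1+f(t))^\gamma\exp(\lambda f(t))$, so that its row generating polynomials are exactly $S_n(f,g;q)=\sum_k S_{n,k}(f,g)q^k$, with exponential generating function $g(t)e^{qf(t)}=(1+f(t))^\gamma\exp((\lambda+q)f(t))$. The $A$- and $Z$-functions of $L$ are
\[
A(t)=f'(\bar f(t))=\frac{1}{\bar f'(t)},\qquad Z(t)=\frac{g'(\bar f(t))}{g(\bar f(t))}=\Bigl(\frac{\gamma}{1+t}+\lambda\Bigr)\frac{1}{\bar f'(t)},
\]
using $f(\bar f(t))=t$ and $(\log g)'=\gamma f'/(1+f)+\lambda f'$. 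The role of the hypothesis is that it controls both functions at once: since $A(t)/(1+t)=1/[(1+t)\bar f'(t)]$ is assumed PF and $(1+t)$ is PF, the convolution product $A(t)=(1+t)\cdot A(t)/(1+t)=1/\bar f'(t)$ is again PF, while $Z(t)=(\gamma+\lambda(1+t))\cdot A(t)/(1+t)$ is a nonnegative combination of the same PF data.

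The core is part~(i). First reduce to the case $1/[(1+t)\bar f'(t)]=\prod_{i=1}^m(1+x_it)$ with $x_i\ge0$; the general PF case follows by the standard limiting argument. In this case $A(t)=(1+t)\prod_{i=1}^m(1+x_it)$ and $Z(t)=(\gamma+\lambda(1+t))\prod_{i=1}^m(1+x_it)$ are polynomials with nonnegative coefficients in $\gamma,\lambda$ and the $x_i$. Feeding these into the production-matrix computation exactly as in the proof of Theorem~\ref{thm+Sheffer+polynomial} produces a branched Stieltjes-type continued fraction for $\sum_n S_n(f,g;q)t^n$ whose coefficients lie in $\mathbb{R}^{\geq0}[\gamma,\lambda,q]$ (the only genuinely new coefficient being the leading $\gamma+\lambda+q$ created by $Z(0)=\gamma+\lambda$). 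This yields the coefficientwise Hankel total positivity, that is, the $(\gamma,\lambda,q)$-SM property of $(S_n(f,g;q))_{n\ge0}$; the statement for the reciprocals $(S^*_n(f,g;q))_{n\ge0}$ follows from the same computation after passing to reciprocal polynomials, exactly as in Theorem~\ref{thm+Sheffer+polynomial}(i). Then $3$-$(\gamma,\lambda,q)$-log-convexity is immediate from Lemma~\ref{lem+3-q-log-convex}, since the $(\gamma,\lambda,q)$-SM property gives in particular Hankel-TP$_4$.

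The remaining items are corollaries. Part~(ii) is part~(i) specialized to $\lambda=0$. Part~(iii) depends only on $f$: because $1/\bar f'(t)$ is PF (shown above), Theorem~\ref{thm+Sheffer+polynomial}(ii) gives that $[f_{i+j+1}]$ is TP, whence $(f_{n+1})_{n\ge0}$ is SM, and its $3$-log-convexity again follows from Lemma~\ref{lem+3-q-log-convex}. Part~(iv) follows from the $(\gamma,\lambda)$-total positivity of the production matrix of $L$ built from the nonnegative $A,Z$ above: a totally positive production matrix forces its exponential Riordan array to be totally positive, so $[S_{n,k}(f,g)]$ is $(\gamma,\lambda)$-TP (alternatively one invokes Theorem~\ref{thm+Sheffer+triangle}). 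Part~(v) identifies $[(-1)^{n-k}S_{n,k}(\bar f,1/g(\bar f))]$ as the signless inverse array $L^{-1}=[1/g(\bar f),\bar f]$, whose total positivity for $\gamma,\lambda\ge0$ comes from applying the same criterion with the roles of $f$ and $\bar f$ exchanged. Part~(vi) is then immediate from Lemma~\ref{lem+conv}: for fixed $\gamma,\lambda,q\ge0$ the sequence $(S_n(f,g;q))_n$ is a Stieltjes moment sequence by part~(i), so the $[S_{n,k}(f,g)]$-convolution preserves the SM property.

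I expect the one genuine difficulty to be part~(i): verifying that the extra $\gamma/(1+t)$ term in $Z$ still yields nonnegative continued-fraction coefficients, coefficientwise in all three indeterminates $\gamma,\lambda,q$. This is precisely where the exact form of the hypothesis matters, as placing $(1+t)$ in the denominator of the PF condition is what cancels the $1/(1+t)$ in $Z$ and keeps $A$ and $Z$ polynomial with nonnegative coefficients. A secondary technical point is the reduction to finite products together with the continuity argument transferring total positivity back to the general PF case; and in part~(v) the sign pattern and transfer of the PF condition to $\bar f$ must be tracked with care. The rest is routine specialization and bookkeeping.
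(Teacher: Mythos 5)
Your computation of the $A$- and $Z$-functions agrees with the paper's: $A(t)=(1+t)\varphi(t)$ and $Z(t)=(\gamma+\lambda+\lambda t)\varphi(t)$ with $\varphi(t)=1/[(1+t)\bar f'(t)]$. But the heart of your part (i) --- that feeding these into ``the production-matrix computation exactly as in the proof of Theorem \ref{thm+Sheffer+polynomial}'' yields a branched Stieltjes-type continued fraction whose ``only genuinely new coefficient'' is $\gamma+\lambda+q$ --- is false, and this is exactly where the new difficulty lives. The machinery behind Theorem \ref{thm+Sheffer+polynomial} is the case $\omega=c=0$ of Theorem \ref{thm+Hakel} (parts (iv)--(v)), where $Z$ is a \emph{constant} multiple of $\varphi$, so the tridiagonal factor $J$ in the production matrix $P=\Lambda\Gamma(\varphi)J\Lambda^{-1}$ degenerates to an upper bidiagonal matrix and $P$ factors into nonnegative bidiagonals. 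Here $Z$ contains the term $\lambda t\,\varphi(t)$ (so $\omega=\lambda\neq0$) and $A$ carries the extra factor $(1+t)$ (so $b=1$), and $J$ becomes the genuinely tridiagonal matrix with diagonal $\gamma+\lambda+n$, superdiagonal $n+1$ and subdiagonal $\lambda$. Nonnegativity of the coefficients of $A$ and $Z$ gives only entrywise nonnegativity of $P$, which is far from the total positivity needed to invoke Lemma \ref{lem+TP+production}. Concretely, your claimed fraction already fails in the Laguerre case $f(t)=t/(1-t)$, $\gamma=1$, $\lambda=0$ (so $m=1$, $x_1=1$): your coefficients $(1+q,1,1+q,2,\ldots)$ give $a_2=(1+q)\bigl((1+q)+1\bigr)=2+3q+q^2$, whereas $S_2(f,1+f;q)=2+4q+q^2$. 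It is no accident that, unlike Theorem \ref{thm+Sheffer+convolution}, the paper's Theorem \ref{thm+Sheffer+convolution+2} contains no continued-fraction item: the $\gamma$-term destroys that structure.

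What the paper does instead is apply the general criterion Theorem \ref{thm+Hakel}(i) ($P=\Lambda\Gamma(\varphi)J\Lambda^{-1}$ is TP whenever $\varphi$ is PF and $J$ is coefficientwise TP) and then supply the missing ingredient: the $(\gamma,\lambda)$-total positivity of the specific tridiagonal $J$ above, a nontrivial fact quoted from \cite[Proposition 3.3]{Zhu202}, not a consequence of nonnegative entries. This is Theorem \ref{thm+Hakel+gamma}, from which Theorem \ref{thm+Sheffer+convolution+2} follows together with Proposition \ref{prop+recp}, Lemma \ref{lem+3-q-log-convex} and Lemma \ref{lem+conv}. The same gap infects your parts (iv) and (v): (iv) rests on the same unproved TP of the production matrix, and your fallback, Theorem \ref{thm+Sheffer+triangle}, does not apply since $g=(1+f)^\gamma e^{\lambda f}$ is not known to be a PF function; for (v), ``exchanging the roles of $f$ and $\bar f$'' is unavailable (nothing is assumed about $1/f'$), and the correct argument is that $[(-1)^{n-k}S_{n,k}(\bar f,1/g(\bar f))]_{n,k}$ is the signless inverse of the TP matrix $[S_{n,k}(f,g)]_{n,k}$, hence TP. Your reductions of (ii), (iii) and (vi) to (i) via Lemmas \ref{lem+3-q-log-convex} and \ref{lem+conv} are fine, but only once (i) is actually established.
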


In the following, we will present the proof of Theorem
\ref{thm+Toeplitz} in Section $2$. In Section $3$, using the theory
of exponential Riordan arrays, we will present some generalized
results for total positivity. Then we use them to prove Theorems
\ref{thm+Sheffer+triangle}, \ref{thm+Sheffer+polynomial},
\ref{thm+Sheffer+convolution} and \ref{thm+Sheffer+convolution+2}.
In Section $4$, we apply our results to some combinatorial triangles
including Bessel triangles of two kinds and their generalizations,
the Lah triangle and its generalization, the idempotent triangle and
some triangles related to binomial coefficients, rook polynomials
and Laguerre polynomials. We not only get total positivity of these
lower-triangles, and $q$-Stieltjes moment properties and
$3$-$q$-log-convexity of their row-generating functions, but also
prove that their triangular convolutions preserve Stieltjes moment
property.

\section{Total positivity of the Toeplitz matrix}

The next result will be used repeatedly, which follows from the
classical Cauchy-Binet formula.
\begin{lem}\label{lem+prod+TP}
The product of two $\textbf{x}$-TP$_r$ ($\textbf{x}$-TP, resp.)
matrices is still $\textbf{x}$-TP$_r$ ($\textbf{x}$-TP, resp.).
\end{lem}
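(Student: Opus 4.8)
The plan is to reduce the claim to the behavior of minors under matrix multiplication via the Cauchy-Binet formula, combined with the elementary observation that $\mathbb{R}^{\geq0}[\textbf{x}]$ is closed under both addition and multiplication.

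First I would set $C=AB$, where $A=[a_{i,k}]$ and $B=[b_{k,j}]$ are the two given matrices, and fix an order-$k$ minor of $C$ with $k\le r$, specified by row indices $I=\{i_1<\cdots<i_k\}$ and column indices $J=\{j_1<\cdots<j_k\}$. By the Cauchy-Binet formula this minor expands as
\[
\det C[I,J]=\sum_{K}\det A[I,K]\,\det B[K,J],
\]
where $K$ ranges over the $k$-element subsets of the intermediate index range. Since $A$ and $B$ are $\textbf{x}$-TP$_r$ and $k\le r$, each factor $\det A[I,K]$ and $\det B[K,J]$ lies in $\mathbb{R}^{\geq0}[\textbf{x}]$. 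Every summand, being a product of two such polynomials, again lies in $\mathbb{R}^{\geq0}[\textbf{x}]$, and hence so does the sum (which is finite after the reduction discussed below). Thus every order-$k$ minor of $C$ with $k\le r$ has nonnegative coefficients, which is precisely the assertion that $C$ is $\textbf{x}$-TP$_r$. The unbounded ($\textbf{x}$-TP) case is identical, simply letting $k$ range over all positive integers.

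The one point demanding care is the legitimacy of Cauchy-Binet for infinite matrices: the sum over $K$ must be well-defined, so the hard part is not the algebra but the convergence of the product $AB$ itself. I would handle this by the standard device of verifying the minor conditions on finite principal (or initial) submatrices, for which the finite Cauchy-Binet formula applies verbatim and the intermediate sum is a genuine finite sum. In the applications of this paper the relevant matrices are lower triangular, so for each fixed pair $(I,J)$ only finitely many index sets $K$ contribute a nonzero term and no analytic convergence issue arises. I would record this observation explicitly so that the passage from the finite formula to the infinite matrix is transparent, after which the closure of $\mathbb{R}^{\geq0}[\textbf{x}]$ under sums and products delivers the conclusion immediately.
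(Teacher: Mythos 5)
Your proof is correct and takes essentially the same approach as the paper, which disposes of this lemma in one line by noting that it ``follows from the classical Cauchy-Binet formula'' --- precisely the minor expansion $\det C[I,J]=\sum_{K}\det A[I,K]\det B[K,J]$ you write out, combined with closure of $\mathbb{R}^{\geq0}[\textbf{x}]$ under sums and products. Your extra remark on why the sum over $K$ is legitimate for the infinite matrices at hand (reduction to finite submatrices, row-finiteness in the triangular setting) only makes explicit what the paper leaves implicit.
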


 \textbf{The Proof of Theorem
\ref{thm+Toeplitz}:} Note for the convolution
$c_n=\sum_{k\geq0}a_kb_{n-k}$ for $n\geq0$ that we have the
decomposition
$$[c_{i-j}]_{i,j\geq0}=[a_{i-j}]_{i,j\geq0}[b_{i-j}]_{i,j\geq0}.$$
So, by Lemma \ref{lem+prod+TP}, the product of two PF$_r$ functions
is a PF$_r$ function. Since $g(t)$ is still a PF$_r$, for the
P\'olya frequency of order $r$ of the product of $g(t)$ and
$\exp\left(\sum_{j\geq1}x_j\frac{t^j}{j}\right)$, it suffices to
prove a stronger result that the Toeplitz matrix $[\mathcal
{A}_{i-j}(\textbf{x})]_{i,j\geq0}$ is TP when $g(t)=1$.

 By taking
derivative in $t$ on both sides of the equality
\begin{equation*} \sum_{n\geq0}\mathcal
{A}_n(\textbf{x})t^n=\exp\left(\sum_{j\geq1}x_j\frac{t^j}{j}\right),
\end{equation*}
we have
\begin{equation*} \sum_{n\geq1}n\mathcal
{A}_n(\textbf{x})t^{n-1}=\sum_{j\geq1}x_jt^{j-1}\sum_{n\geq0}\mathcal
{A}_n(\textbf{x})t^n.
\end{equation*}
This implies for $n\geq1$ that $$n\mathcal
{A}_n(\textbf{x})=\sum_{j=1}^nx_j\mathcal {A}_{n-j}(\textbf{x}).$$
This recurrence relation is closely related to the complete
symmetric function $h_n$ and the power sum symmetric function $p_n$,
which satisfy $$nh_n=\sum_{j=1}^np_jh_{n-j}.$$ Moreover, it is also
known that the ordinary generating function of $h_n$ can be written
as
$$\sum_{n\geq0}h_nt^n=\prod_{i\geq1}\frac{1}{1-\beta_i t}.$$
So $(h_n)_{n\geq0}$ forms a PF sequence for $\beta_i\geq0$ for
$i\geq 1$. In consequence, if there exists a sequence
$(\lambda_i)_{i\geq1}$ of nonnegative real numbers such that
$x_n=\sum_{i\geq1}\lambda_i^n$, then $\mathcal {A}_n(\textbf{x})$ is
a complete symmetric function of $(\lambda_i)_{i\geq1}$. So
$(\mathcal {A}_n(\textbf{x}))_{n\geq0}$ is a PF sequence. That is to
say that the Toeplitz matrix $[\mathcal
{A}_{i-j}(\textbf{x})]_{i,j\geq0}$ is TP. The proof is complete.

\section{Exponential Riordan arrays and total positivity}
\subsection{Definition and properties of exponential Riordan arrays}
The set of Sheffer polynomials forms a group called the {\it Sheffer
group}, which is isomorphic to the exponential Riordan group. First
let us recall some properties of the exponential Riordan array. An
{\it exponential Riordan array}~\cite{Bar11-2,DFR,DS}, denoted by
$R=[R_{n,k}]_{n,k}=\left(g(t),f(t)\right)$, is an infinite lower
triangular matrix whose exponential generating function of the $k$th
column is
$$\frac{g(t)f^k(t)}{k!}$$ for $k=0,1,2,\ldots$, where $g(0)f'(0)\neq0$ and $f(0)=0$.
That is to say for $n,k\geq0$ that
$$R_{n,k}=\frac{n!}{k!}[t^n]g(t)f^k(t).$$
Let $R_n(q)=\sum_{k=0}^nR_{n,k}q^k$ be the row-generating function
of $R$. Then we have
\begin{eqnarray}\label{generating funtion+GR}
\sum_{n\geq0}R_n(q)\frac{t^n}{n!}=g(t)\exp\left(qf(t)\right).
\end{eqnarray}
 The group law is then given by
\begin{eqnarray}\label{Riordan+product}
(g,f)*(h,\ell)=(g\times h(f),\ell(f)).
\end{eqnarray}
The identity for this law is $I =(1,t)$ and the inverse of $(g,f)$
is $(g,f)^{-1}=(1/(g(\overline{f})),\overline{f})$, where
$\overline{f}$ is the compositional inverse of $f$.

 An exponential Riordan array can also be defined by the
 next recurrence relation.
\begin{prop}\emph{\cite{DFR}}\label{prop+TP+production}
Let $[R_{n,k}]_{n,k\geq0}=\left(g(t),f(t)\right)$ be an exponential
Riordan array. Then there exist two sequences $(z_n)_{n\geqslant0}$
and $(a_n)_{n\geqslant0}$ such that
$$R_{0,0}=1,\ \ R_{n,0}=\sum_{i\geqslant0}i!z_iR_{n-1,i},\ \
R_{n,k}=\frac{1}{k!}\sum_{i\geqslant
k-1}i!(z_{i-k}+ka_{i-k+1})R_{n-1,i}$$ for $n,k\geqslant1$. In
particular,
$$Z(t)=\frac{g'(\bar{f}(t))}{g(\bar{f}(t))},\quad
A(t)=f'(\bar{f}(t)),$$ where $Z(t)=\sum_{n\geqslant0} z_nt^n$ and
$A(t)=\sum_{n\geqslant0} a_nt^n$.
\end{prop}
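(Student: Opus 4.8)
The plan is to package the entire array into the bivariate generating function $G(t,q)=g(t)\exp\!\big(qf(t)\big)$. By the very definition of the exponential Riordan array, the coefficient of $q^k$ in $G$ is the $k$th column exponential generating function $c_k(t):=g(t)f(t)^k/k!=\sum_{n\geq0}R_{n,k}t^n/n!$, so $G=\sum_{k\geq0}q^k c_k(t)$. Differentiating $G$ with respect to $t$ produces the first-order linear relation
\begin{equation*}
\partial_t G=\left(\frac{g'(t)}{g(t)}+q\,f'(t)\right)G,
\end{equation*}
and the whole recurrence will be read off from this relation once the two factors $g'/g$ and $f'$ are rewritten in a form adapted to the column basis $(c_k)$.

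The key step is to express $g'/g$ and $f'$ as power series in $f(t)$ rather than in $t$. Because $f(0)=0$ and $f'(0)\neq0$, the compositional inverse $\bar{f}$ exists as a formal power series, and because $g(0)\neq0$ the ratio $g'/g$ is itself a well-defined power series; I would therefore set $Z(t):=g'(\bar{f}(t))/g(\bar{f}(t))$ and $A(t):=f'(\bar{f}(t))$, which are exactly the generating functions claimed in the statement and which satisfy $g'(t)/g(t)=Z(f(t))$ and $f'(t)=A(f(t))$ after the substitution $t\mapsto f(t)$. Writing $Z(w)=\sum_i z_iw^i$ and $A(w)=\sum_i a_iw^i$, the differentiated relation becomes $\partial_t G=\sum_i z_i f(t)^iG+q\sum_i a_i f(t)^iG$.

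To recover the recurrence I would re-expand everything in the column basis by means of the elementary identity $f(t)^i c_k(t)=\tfrac{(k+i)!}{k!}\,c_{k+i}(t)$, which is immediate from $c_k=gf^k/k!$. Applying it inside $f^iG=\sum_k q^k f^i c_k$ and then collecting the coefficient of $q^k$ on both sides yields
\begin{equation*}
c_k'(t)=\sum_{i\geq k}\frac{i!}{k!}\,z_{i-k}\,c_i(t)+\sum_{i\geq k-1}\frac{i!}{(k-1)!}\,a_{i-k+1}\,c_i(t),
\end{equation*}
where the shift $k\mapsto k-1$ in the second sum is exactly the effect of the extra factor $q$. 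Extracting the coefficient of $t^{n-1}/(n-1)!$ (so that $c_k'\mapsto R_{n,k}$ and $c_i\mapsto R_{n-1,i}$), using $\tfrac{i!}{(k-1)!}=k\cdot\tfrac{i!}{k!}$, and adopting the convention $z_{-1}=0$ to merge the two ranges into $i\geq k-1$, gives precisely $R_{n,k}=\tfrac1{k!}\sum_{i\geq k-1}i!\,(z_{i-k}+ka_{i-k+1})R_{n-1,i}$. The case $k=0$ collapses to $R_{n,0}=\sum_{i\geq0}i!\,z_iR_{n-1,i}$, and $R_{0,0}=g(0)=1$ under the standard normalization.

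The main obstacle is really the second step: one must verify that $g'/g$ and $f'$ genuinely lie in the image of the substitution $h(w)\mapsto h(f(t))$, equivalently that $Z$ and $A$ are bona fide formal power series. This is precisely where the hypotheses $f(0)=0$ and $f'(0)\neq0$ (invertibility of $f$, hence existence of $\bar{f}$) and $g(0)\neq0$ (so that $g'/g$ is a power series) are used. The only remaining delicate point is bookkeeping: one must track the two distinct lower limits $i\geq k$ and $i\geq k-1$ and check that the boundary index $i=k-1$ contributes through $A$ alone, which is exactly what forces the convention $z_{-1}=0$ when the two sums are amalgamated into the single formula of the statement.
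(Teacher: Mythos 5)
Your proof is correct. Note that the paper itself supplies no proof of this proposition: it is quoted as a known result from \cite{DFR}, so there is no internal argument to compare against. The derivation you give---differentiating the bivariate generating function $G(t,q)=g(t)\exp(qf(t))$ in $t$, rewriting $g'/g$ and $f'$ as series in $f(t)$ via $Z$ and $A$ (which is where $f(0)=0$, $f'(0)\neq0$, $g(0)\neq0$ are used), and then re-expanding in the column basis through the identity $f(t)^i c_k(t)=\frac{(k+i)!}{k!}c_{k+i}(t)$---is precisely the standard production-matrix argument underlying the cited result of Deutsch--Ferrari--Rinaldi, so in substance you have reconstructed the source's proof rather than found a new route. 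All the bookkeeping checks out: the $q$-shift correctly produces the range $i\geq k-1$ in the $A$-sum, the convention $z_{-1}=0$ is exactly what the paper later assumes when amalgamating the two sums, and the $k=0$ case collapses as you say because the $A$-term carries an explicit factor of $q$. The one point worth flagging is that $R_{0,0}=1$ requires the normalization $g(0)=1$ (the paper's definition only demands $g(0)f'(0)\neq0$), which you correctly identify as a normalization rather than a consequence.
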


The next result follows immediately from Proposition
\ref{prop+TP+production}.
\begin{prop}\label{prop+k+production}
Let $[R_{n,k}]_{n,k\geq0}=\left(g(t),f(t)\right)$ be an exponential
Riordan array. Assume $\mathfrak{R}=[\mathfrak{R}_{n,k}]_{n,k}$,
where $\mathfrak{R}_{n,k}=R_{n,k}k!$. Then there exist two sequences
$(z_n)_{n\geqslant0}$ and $(a_n)_{n\geqslant0}$ such that
$$\mathfrak{R}_{0,0}=1,\ \ \mathfrak{R}_{n,0}=\sum_{i\geqslant0}z_i\mathfrak{R}_{n-1,i},\ \
\mathfrak{R}_{n,k}=\sum_{i\geqslant
k-1}(z_{i-k}+ka_{i-k+1})\mathfrak{R}_{n-1,i}$$ for $n,k\geqslant1$,
where
$$Z(t)=\sum_{n\geqslant0} z_nt^n=\frac{g'(\bar{f}(t))}{g(\bar{f}(t))},\quad
A(t)=\sum_{n\geqslant0} a_nt^n=f'(\bar{f}(t)).$$
\end{prop}

Associated to each exponential Riordan array
$R=\left(g(t),f(t)\right)$, there is a matrix
$P=(p_{i,j})_{i,j\geqslant0}$, called the {\it production matrix},
such that
$$\overline{R}=RP,$$
where $\overline{R}$ is obtained from $R$ with the first row
removed. Assume that $z_{-1}=0.$ Deutsch {\it et al.}~\cite{DFR}
obtained the production matrix
\begin{eqnarray}\label{PP}
P=[p_{i,j}]_{i,j\geqslant0}=\left[
  \begin{array}{ccccccc}
    z_0&a_0& \\
   1!z_1& \frac{1!}{1!}(z_0+a_1)&a_0&&\\
   2!z_2&\frac{2!}{1!}(z_1+a_2) &\frac{2!}{2!}(z_0+2a_1)&a_0&\\
   3!z_3 &\frac{3!}{1!}(z_2+a_3)&\frac{3!}{2!}(z_1+2a_2)&\frac{3!}{3!}(z_0+3a_1)&\ddots\\
\vdots &\vdots&\vdots&\vdots&\ddots \\
  \end{array}
\right],
\end{eqnarray}
where the elements
$$p_{i,j}=\frac{i!}{j!}(z_{i-j}+ja_{i-j+1}),$$ for $i,j\geqslant 0.$

\begin{rem}
By Proposition (\ref{prop+k+production}), for the array
$\mathfrak{R}$ there is a matrix $\mathcal {P}=(\mathcal
{P}_{i,j})_{i,j\geqslant0}$ with
$$\mathcal {P}_{i,j}=z_{i-j}+ja_{i-j+1},$$ for $i,j\geqslant 0$ such
that $$\overline{\mathfrak{R}}=\mathfrak{R}\mathcal {P}.$$
\end{rem}

\subsection{Total positivity of exponential
Riordan arrays} The following result for total positivity is a
special case of \cite[Theorems 9.4, 9.7]{PSZ18}.

\begin{lem}\label{lem+TP+production}\cite{PSZ18}
Let $M=[M_{n,k}]_{n,k\geq0}$ be a lower triangular matrix with
$M_{0,0}=1$ and $\overline{M}=M\mathcal {P}$. If $\mathcal {P}$ is
$\textbf{x}$-TP$_r$, then both the matrix $[M_{n,k}]_{n,k\geq0}$ and
the Hankel matrix $[M_{i+j,0}]_{i,j\geq0}$ are $\textbf{x}$-TP$_r$.
\end{lem}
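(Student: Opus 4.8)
The plan is to first unwind the hypotheses to a single transparent formula and then isolate the one genuinely hard ingredient. From $\overline{M}=M\mathcal{P}$, $M_{0,0}=1$, and lower-triangularity one gets by induction that row $n$ of $M$ equals $e_0^{\top}\mathcal{P}^{n}$; that is, $M_{n,k}=(\mathcal{P}^{n})_{0,k}$ with $\mathcal{P}^{0}=I$, all entries being finite sums because $\mathcal{P}$ is lower triangular and banded. Thus $M$ is the ``output matrix'' of the production matrix $\mathcal{P}$, and the two conclusions read: (1) the output matrix of an $\textbf{x}$-TP$_{r}$ production matrix is $\textbf{x}$-TP$_{r}$; (2) the Hankel matrix of its zeroth column is $\textbf{x}$-TP$_{r}$.

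I would dispatch the Hankel statement (2) by reducing it to (1). Since $H_{i,j}=M_{i+j,0}=(\mathcal{P}^{i+j})_{0,0}=\sum_{k}(\mathcal{P}^{i})_{0,k}(\mathcal{P}^{j})_{k,0}$, we have $H=MN^{\top}$ with $N_{j,k}=(\mathcal{P}^{j})_{k,0}=((\mathcal{P}^{\top})^{j})_{0,k}$. So $N$ is the output matrix of the production matrix $\mathcal{P}^{\top}$ (indeed $N_{0,0}=1$ and $\overline{N}=N\mathcal{P}^{\top}$), and since transposition preserves every minor, $\mathcal{P}^{\top}$ is $\textbf{x}$-TP$_{r}$; the argument for (1) uses only the formula $N_{j,k}=((\mathcal{P}^{\top})^{j})_{0,k}$ and not the triangular form, so it gives $N$, hence $N^{\top}$, $\textbf{x}$-TP$_{r}$. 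Then $H=MN^{\top}$ is $\textbf{x}$-TP$_{r}$ by the Cauchy--Binet argument of Lemma \ref{lem+prod+TP}.

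Everything therefore rests on (1), and here one half is immediate: by Lemma \ref{lem+prod+TP} every power $\mathcal{P}^{n}$ is $\textbf{x}$-TP$_{r}$, which already controls any $p\times p$ minor ($p\le r$) living inside a single power. The main obstacle is that a minor of $M$ on rows $n_{1}<\cdots<n_{p}$ and columns $k_{1}<\cdots<k_{p}$ equals $\det\big[(\mathcal{P}^{n_{i}})_{0,k_{j}}\big]$, which simultaneously selects the zeroth row of the \emph{distinct} powers $\mathcal{P}^{n_{1}},\ldots,\mathcal{P}^{n_{p}}$ through a common source row $0$; this does not factor as a single matrix product, so Cauchy--Binet alone is insufficient. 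Overcoming exactly this is the content of \cite[Thms.~9.4, 9.7]{PSZ18}.

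To carry out (1) I would follow their route. On each finite truncation, factor $\mathcal{P}$ into elementary bidiagonal nonnegative matrices via the Whitney--Loewner type factorization valid for TP$_{r}$ matrices, so that one application of $\mathcal{P}$ is realized as a planar weighted network; stacking one such network per level builds the layered network computing $M_{n,k}=(\mathcal{P}^{n})_{0,k}$, and the minors of $M$ then become $\textbf{x}$-nonnegative by a Lindstr\"om--Gessel--Viennot nonintersecting-paths argument. Alternatively one reduces to contiguous (initial) minors via the Fekete--Gasca--Pe\~na criterion and propagates their nonnegativity through the production recurrence $\overline{M}=M\mathcal{P}$. Either way every minor emerges as a polynomial with nonnegative coefficients in $\textbf{x}$, so $M$ is $\textbf{x}$-TP$_{r}$. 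I expect this factorization and nonintersecting-paths bookkeeping to be the delicate step, whereas the reductions above are routine.
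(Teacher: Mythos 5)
Your reduction of the lemma to the single core statement (1) is correct, and your treatment of the Hankel half is exactly right: $M_{n,k}=(\mathcal{P}^{\,n})_{0,k}$, then $H=MN^{\top}$ with $N=\mathcal{O}(\mathcal{P}^{\top})$, transpose-invariance of minors, and Cauchy--Binet. This matches the architecture of \cite{PSZ18} (which is all the paper itself invokes: it gives no proof, only the citation of Theorems 9.4 and 9.7 there). The genuine gap is that the core statement (1) is never proved: you delegate it to machinery that does not apply in this setting. A Loewner--Whitney bidiagonal factorization exists for \emph{invertible, fully totally positive} real matrices; a matrix that is TP$_{r}$ but not TP of all orders admits no factorization into nonnegative bidiagonals (any such product is TP of every order), so for finite $r$ --- precisely the case the lemma covers --- the method cannot even start. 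Worse, the lemma is about $\textbf{x}$-TP$_{r}$: the entries live in $\mathbb{R}[\textbf{x}]$ with the coefficientwise partial order, where there is no division and no total order, so elimination/factorization arguments are unavailable in principle. The same objections defeat the Fekete--Gasca--Pe\~na alternative: contiguous-minor criteria require strictness or nonsingularity over a totally ordered field, nonnegativity of contiguous minors does not imply total positivity even over $\mathbb{R}$, and ``propagating through the recurrence'' is not an argument. Avoiding exactly these tools is the reason \cite{PSZ18} can work in an arbitrary partially ordered commutative ring.

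The irony is that the step you flag as ``the one genuinely hard ingredient,'' for which ``Cauchy--Binet alone is insufficient,'' has a two-line Cauchy--Binet proof, and it is the proof in \cite{PSZ18}. Let $M_N$ denote the submatrix of $M$ formed by rows $0,\dots,N$. Then $M_0=(1,0,0,\dots)$ and
\[
M_{N+1}=\begin{pmatrix} e_0^{\top}\\ M_N\,\mathcal{P}\end{pmatrix},
\]
since row $i+1$ of $M$ equals (row $i$ of $M$)$\,\mathcal{P}$. By induction $M_N$ is $\textbf{x}$-TP$_r$, hence $M_N\mathcal{P}$ is $\textbf{x}$-TP$_r$ by Lemma \ref{lem+prod+TP} (the inner sums are finite because the rows of $M_N$ have finite support); and prepending the row $(1,0,0,\dots)$ preserves $\textbf{x}$-TP$_r$, because any minor using that row either uses column $0$, in which case Laplace expansion along the first row exhibits it as a minor of $M_N\mathcal{P}$, or else has a zero first row and vanishes. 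Every minor of $M$ lies in some $M_N$, so $M$ is $\textbf{x}$-TP$_r$. The argument uses only sums and products of nonnegative quantities, so it is valid verbatim coefficientwise, and it never uses triangularity of the output --- so, as you yourself observed is needed, it applies equally to $N=\mathcal{O}(\mathcal{P}^{\top})$, after which your Hankel reduction goes through unchanged.
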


We will apply Lemma \ref{lem+TP+production} to the exponential
Riordan array $[R_{n,k}]_{n,k\geq0}$ and its associated array
$[\mathfrak{R}_{n,k}]_{n,k\geq0}$. The following is obvious and we
omit its proof for brevity.
\begin{lem}\label{lem+factor+TP}
Let $c_n$ and $d_n$ be positive real numbers for $n\geq0$. Then a
matrix $M=[M_{n,k}]_{n,k\geq0}$ is TP$_r$ if and only if the matrix
$M(\textbf{c,d})=[c_nd_kM_{n,k}]_{n,k\geq0}$ is TP$_r$.
\end{lem}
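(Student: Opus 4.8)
The plan is to realize the rescaling $M\mapsto M(\textbf{c,d})$ as a two-sided multiplication by diagonal matrices and then track its effect on minors. Setting $C=\mathrm{diag}(c_0,c_1,c_2,\ldots)$ and $D=\mathrm{diag}(d_0,d_1,d_2,\ldots)$, one checks entrywise that $(CMD)_{n,k}=c_n M_{n,k} d_k$, so $M(\textbf{c,d})=CMD$. Because every $c_n,d_k$ is strictly positive, both $C$ and $D$ are invertible with inverses $C^{-1}=\mathrm{diag}(c_n^{-1})$ and $D^{-1}=\mathrm{diag}(d_n^{-1})$ again diagonal with positive entries, and $M=C^{-1}M(\textbf{c,d})D^{-1}$. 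Thus the roles of $M$ and $M(\textbf{c,d})$ are symmetric, and it suffices to prove one implication.

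Next I would compute an arbitrary minor directly. Fix row indices $n_1<\cdots<n_p$ and column indices $k_1<\cdots<k_p$ with $p\le r$. The associated $p\times p$ submatrix of $M(\textbf{c,d})$ has $(a,b)$-entry $c_{n_a} d_{k_b} M_{n_a,k_b}$, so pulling the scalar $c_{n_a}$ out of the $a$th row and $d_{k_b}$ out of the $b$th column (multilinearity of the determinant in its rows and columns) gives
\[
\det\big[c_{n_a}d_{k_b}M_{n_a,k_b}\big]_{1\le a,b\le p}
=\Big(\prod_{a=1}^{p} c_{n_a}\Big)\Big(\prod_{b=1}^{p} d_{k_b}\Big)\,\det\big[M_{n_a,k_b}\big]_{1\le a,b\le p}.
\]
Since the prefactor $\prod_a c_{n_a}\prod_b d_{k_b}$ is strictly positive, a given minor of $M(\textbf{c,d})$ is nonnegative if and only if the corresponding minor of $M$ is nonnegative. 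Ranging over all choices of row and column sets of size $p\le r$ yields that $M(\textbf{c,d})$ is TP$_r$ if and only if $M$ is TP$_r$.

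There is essentially no obstacle here; the only point that genuinely uses positivity (rather than mere nonnegativity) of the $c_n,d_k$ is the converse direction, where one must divide out the prefactor — equivalently, invert $C$ and $D$. As an alternative that invokes a result already in hand, one may observe that a diagonal matrix with nonnegative entries is TP (any of its minors is either a product of diagonal entries or $0$), so $C$ and $D$ are TP; then $M$ TP$_r$ forces $CMD=M(\textbf{c,d})$ to be TP$_r$ by two applications of Lemma \ref{lem+prod+TP}, and the symmetric factorization through $C^{-1},D^{-1}$ gives the reverse implication. I would present the determinant computation as the main argument since it is self-contained and makes the equivalence transparent.
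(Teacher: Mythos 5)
Your proof is correct and complete: the determinant computation, pulling $c_{n_a}$ out of each row and $d_{k_b}$ out of each column so that every minor of $M(\textbf{c,d})$ equals a strictly positive prefactor times the corresponding minor of $M$, establishes the equivalence in both directions at once. The paper itself omits the proof of this lemma as obvious, and your argument is precisely the standard one it has in mind, so there is nothing further to reconcile.
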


\begin{thm} \label{thm+ERA+TP}
Assume that $g(t)$ is a PF$_{r_1}$ function. If one of $f(t)$ and
$1/\bar{f}'(t)$ is a PF$_{r_2}$, then exponential Riordan arrays
$\left(g(t)e^{\lambda f(t)},f(t)\right)$ and
$\left(g(f(t))e^{\lambda f(t)},f(t)\right)$ are $\lambda$-TP$_r$,
where $r=\min\{r_1,r_2\}$.
\end{thm}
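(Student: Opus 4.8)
The plan is to realize the arrays $\left(g(t)e^{\lambda f(t)},f(t)\right)$ and $\left(g(f(t))e^{\lambda f(t)},f(t)\right)$ as products of exponential Riordan arrays to which I can apply total positivity known for the factors, together with Lemma~\ref{lem+prod+TP} (product of $\textbf{x}$-TP$_r$ matrices stays $\textbf{x}$-TP$_r$). Using the group law \eqref{Riordan+product}, I would write
\begin{equation*}
\left(g(t)e^{\lambda f(t)},f(t)\right)=\left(g(t),t\right)*\left(e^{\lambda t},t\right)*\left(1,f(t)\right),
\end{equation*}
so the claim reduces to showing each factor is $\lambda$-TP$_r$. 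The first factor $(g(t),t)$ is the Toeplitz matrix $[g_{i-j}]$ of the coefficient sequence of $g$, which is TP$_{r_1}$ precisely because $g$ is a PF$_{r_1}$ function; the middle factor $(e^{\lambda t},t)$ has $(n,k)$-entry $\binom{n}{k}\lambda^{n-k}$, a weighted Pascal triangle that is $\lambda$-TP$_\infty$. The genuine work is the composition factor $(1,f(t))$, which carries the PF$_{r_2}$ hypothesis on $f$ or $1/\bar f'$.

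For that composition factor I would invoke the production-matrix machinery of Section~3.2. By Proposition~\ref{prop+TP+production}, the exponential Riordan array $\left(g(t)e^{\lambda f(t)},f(t)\right)$ has production matrix $P$ in \eqref{PP} determined by
\begin{equation*}
Z(t)=\frac{(ge^{\lambda f})'(\bar f)}{(ge^{\lambda f})(\bar f)}=\frac{g'(\bar f)}{g(\bar f)}+\lambda f'(\bar f),\qquad A(t)=f'(\bar f(t)).
\end{equation*}
The plan is to pass to the associated array $\mathfrak R=[R_{n,k}k!]$ via Lemma~\ref{lem+factor+TP}, whose simpler production matrix $\mathcal P=[z_{i-j}+j a_{i-j+1}]$ (from the Remark after \eqref{PP}) has a clean bidiagonal-type structure. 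I then want to show $\mathcal P$ is $\lambda$-TP$_r$ and conclude by Lemma~\ref{lem+TP+production} that the whole array is $\lambda$-TP$_r$. Since $Z$ and $A$ both involve $\bar f$ and $f'$, the key observation is that $A(t)=f'(\bar f(t))=1/\bar f'(t)$ by differentiating $f(\bar f(t))=t$; thus the PF$_{r_2}$ hypothesis on $1/\bar f'$ is exactly a PF condition on the sequence $(a_n)$, and $\lambda$ enters $\mathcal P$ only additively through the $Z$-part, keeping coefficients in $\lambda$ nonnegative.

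The main obstacle, and the step I expect to be most delicate, is verifying that the production matrix $\mathcal P$ is $\lambda$-TP$_r$ from the PF$_{r_2}$ hypothesis on $\emph{either}$ $f$ or $1/\bar f'$, since these two cases give different-looking generating data for $Z$ and $A$. I would handle the two hypotheses separately: when $1/\bar f'$ is PF$_{r_2}$ the sequence $(a_n)$ is directly PF, and I must check that adding the $z_{i-j}$ contribution (which is governed by $g'/g$ plus the nonnegative $\lambda$-multiple of $A$) preserves TP$_r$ up to order $r=\min\{r_1,r_2\}$; when $f$ itself is PF$_{r_2}$, I would instead first establish the PF property of the relevant composition data, perhaps by factoring $\mathcal P$ further or by relating $(1,f)$ and $(1,\bar f)$ through the inverse rule $(g,f)^{-1}=(1/g(\bar f),\bar f)$. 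For the second array $\left(g(f(t))e^{\lambda f(t)},f(t)\right)$, I would observe via \eqref{Riordan+product} that it factors as $\left(g(t)e^{\lambda t},t\right)*\left(1,f(t)\right)$, so once the composition factor $(1,f)$ is shown TP$_{r_2}$ the same product argument closes this case with no extra effort. Throughout, I rely on $r=\min\{r_1,r_2\}$ arising naturally because the Cauchy-Binet/product step caps the order of total positivity at the smaller of the two factor orders.
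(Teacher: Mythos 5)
Your overall skeleton (peel off $g$ as a Toeplitz-type factor, peel off the $\lambda$-part as a weighted Pascal matrix $[\binom{n}{k}\lambda^{n-k}]$, and reduce everything to the composition array $(1,f(t))$) is the same as the paper's, but two of your steps are false as written and a third is missing. First, your Riordan-group factorizations are in the wrong order: the group is noncommutative, and the law \eqref{Riordan+product} composes the \emph{second} factor's functions with the first factor's $f$. With the paper's convention, $\left(g(t),t\right)*\left(e^{\lambda t},t\right)*\left(1,f(t)\right)=\left(g(t)e^{\lambda t},f(t)\right)$, not $\left(g(t)e^{\lambda f(t)},f(t)\right)$; likewise $\left(g(t)e^{\lambda t},t\right)*\left(1,f(t)\right)=\left(g(t)e^{\lambda t},f(t)\right)$, not $\left(g(f(t))e^{\lambda f(t)},f(t)\right)$. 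The correct orderings are $\left(g(t),t\right)*\left(1,f(t)\right)*\left(e^{\lambda t},t\right)$ and $\left(1,f(t)\right)*\left(g(t)e^{\lambda t},t\right)$; this is exactly the paper's step $\widehat{R}=RB$ with the weighted Pascal matrix $B$ multiplied on the \emph{right}. This slip is repairable, but as stated your matrix identities are simply false.

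The genuine gap is in how you treat the composition factor. You propose to verify that the production matrix of the \emph{full} array $\left(g(t)e^{\lambda f(t)},f(t)\right)$, with $Z(t)=g'(\bar f(t))/g(\bar f(t))+\lambda/\bar f'(t)$, is $\lambda$-TP$_r$. That cannot work in general: Lemma \ref{lem+TP+production} is only a sufficient criterion, and the production matrix of a TP array need not be TP, nor even entrywise nonnegative, because $g'(\bar f)/g(\bar f)$ can have negative coefficients for PF $g$. For instance, $g(t)=1+t$, $f(t)=t$ gives the bidiagonal (hence TP) array $(1+t,t)$, yet $Z(t)=1/(1+t)$ yields $z_1=-1<0$, so its production matrix is not even nonnegative. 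The production-matrix route is viable only for $(1,f(t))$ itself, where $Z\equiv 0$ and $P=\Lambda\,\Gamma\!\left(1/\bar f'\right)\Theta\,\Lambda^{-1}$, and only under the hypothesis that $1/\bar f'(t)$ is PF$_{r_2}$ --- which is precisely how the paper argues. For the other hypothesis, that $f(t)$ itself is PF$_{r_2}$, you offer no concrete argument, and in fact no production-matrix argument can exist: $f(t)=t+t^2$ is PF, but then $1/\bar f'(t)=\sqrt{1+4t}=1+2t-2t^2+\cdots$ has negative coefficients, so the production matrix of $(1,f)$ fails nonnegativity even though $(1,f)$ \emph{is} TP. The paper closes this case by a separate direct induction showing that the ordinary coefficient matrix $\left[[t^n]f^k(t)\right]_{n,k\geq 0}$ is TP$_{r_2}$, using Toeplitz factors built from $f(t)/t$; nothing in your proposal substitutes for that step, and relating $(1,f)$ to $(1,\bar f)$ via the inverse rule gives no total-positivity information in the direction you need.
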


\begin{proof}
We first claim that the total positivity of order $r$ of
$(g(t),f(t))$ implies the $\lambda$-total positivity of order $r$ of
$(g(t)e^{\lambda f(t)},f(t))$. This can be proved as follows. Let
$R=[R_{n,k}]_{n,k}=(g(t),f(t))$ and
$\widehat{R}=[\widehat{R}_{n,k}]_{n,k}=(g(t)e^{\lambda f(t)},f(t))$.
Then \begin{eqnarray}
\sum_{n}\sum_{k}R_{n,k}q^k\frac{t^n}{n!}&=&g(t)e^{q f(t)},\\
\sum_{n}\sum_{k}\widehat{R}_{n,k}q^k\frac{t^n}{n!}&=&g(t)e^{(q+\lambda)
f(t)},
\end{eqnarray}
which implies \begin{eqnarray*} \widehat{R}_{n,k}&=&\sum_{i\geq
k}R_{n,i}\binom{i}{k}{\lambda}^{i-k}.
\end{eqnarray*}
Obviously,
$$\widehat{R}=RB,$$ where $B=[\binom{n}{k}{\lambda}^{n-k}]_{n,k}$.
From the total positivity of the Pascal triangle
$[\binom{n}{k}]_{n,k}$, we immediately get that $B$ is $\lambda$-TP.
Thus, by Lemma \ref{lem+prod+TP}, it follows from $\widehat{R}=RB$
that the total positivity of order $r$ of $R$ implies the
$\lambda$-total positivity of order $r$ of $\widehat{R}$. Similarly,
we can deduce that the total positivity of order $r$ of
$(g(f(t)),f(t))$ implies the $\lambda$-total positivity of order $r$
of $(g(f(t))e^{\lambda f(t)},f(t))$.

In what follows we only need to prove the total positivity of order
$r$ of $(g(t),f(t))$ and $(g(f(t)),f(t))$.

 In terms of the rule (\ref{Riordan+product}) about the product of Riordan arrays, we have decompositions
\begin{eqnarray}
(g(t),f(t))&=&(g(t),t)(1,f(t)),\\
(g(f(t)),f(t))&=&(1,f(t))(g(t),t).
\end{eqnarray}
Thus, in order to demonstrate total positivity of $(g(t),f(t))$ and
$(g(f(t)),f(t))$, in terms of Lemma \ref{lem+prod+TP}, it suffices
to prove that $(g(t),t)$ and $(1,f(t))$ are TP$_r$.

 Let $G=[G_{n,k}]_{n,k\geq0}$, where $G_{n,k}=[t^n]g(t)t^k$.
It follows from Lemma \ref{lem+factor+TP} that the total positivity
of $G$ implies that of $(g(t),t)$. Obviously, $G= \Gamma(g)$ is the
Toeplitz matrix of the sequence $(g_n)_{n\geq0}$, which is TP$_r$.

In the following, we will prove that $(1,f(t))$ is TP$_r$ if one of
$f(t)$ and $1/\bar{f}'(t)$ is a P\'olya frequency function of order
$r_2$.

(i) Assume that $f(t)$ is a P\'olya frequency function of order
$r_2$.  Let $F=[F_{n,k}]_{n,k\geq0}$ with $F_{n,k}=[t^n]f^k(t)$. We
give two different proofs as follows.

\textbf{The first proof:} It follows from Lemma \ref{lem+factor+TP}
that the total positivity of $F$ implies that of $(1,f(t))$. Denote
$F$ by $[1,f(x),f^2(x),f^3(x),\ldots]$ and its submatrix of the
first $n$ columns by
$$[1,f(x),f^2(x),f^3(x),f^{n-1}(x)]=\left[
  \begin{array}{ccccc}
    C_{n,n} \\
   D_{\infty,n} \\
  \end{array}
\right].$$ Let $h(t)=f(t)/t=\sum_{n\geq0}h_nt^n$ and $\Gamma(h)$
denote the Toeplitz matrix of $(h_n)_{n\geq0}$. Then we have
$$\left[
  \begin{array}{ccccc}
   C_{n,n}&0 \\
    D_{\infty,n} &\Gamma(h^{n-1})\\
  \end{array}
\right]\left[
  \begin{array}{ccccc}
    I_{n}&0 \\
   0 &\Gamma(h)\\
  \end{array}
\right]=\left[
  \begin{array}{ccccc}
   C_{n,n}&0 \\
    D_{\infty,n} &\Gamma(h^{n})\\
  \end{array}
\right].$$ In addition, it is obvious that
$$\left[
  \begin{array}{ccccc}
   C_{n,n}&0 \\
    D_{\infty,n} &\Gamma(h^{n})\\
  \end{array}
\right] =\left[
  \begin{array}{ccccc}
   C_{n+1,n+1}&0 \\
    D_{\infty,n+1} &\Gamma(h^{n})\\
  \end{array}
\right].$$ Thus, we get
\begin{eqnarray}\label{product+F}
\left[
  \begin{array}{ccccc}
   C_{n,n}&0 \\
    D_{\infty,n} &\Gamma(h^{n-1})\\
  \end{array}
\right]\left[
  \begin{array}{ccccc}
    I_{n}&0 \\
   0 &\Gamma(h)\\
  \end{array}
\right]=\left[
  \begin{array}{ccccc}
   C_{n+1,n+1}&0 \\
    D_{\infty,n+1} &\Gamma(h^{n})\\
  \end{array}
\right].
\end{eqnarray}

In order to prove total positivity of order $r_2$ of $F$, it
suffices to show that of $\left[
  \begin{array}{ccccc}
    C_{n,n} \\
   D_{\infty,n} \\
  \end{array}\right]$ for all $n$.  Note that $
\left[
  \begin{array}{ccccc}
    I_{n}&0 \\
   0 &\Gamma(h)\\
  \end{array}
\right] $ and the Toeplitz matrix $\Gamma(h^{n})$ are TP$_{r_2}$
because $f(t)$ is a P\'olya frequency function of order $r_2$.
Applying Lemma \ref{lem+prod+TP} to (\ref{product+F}), by induction
on $n$, we have $$\left[
  \begin{array}{ccccc}
   C_{n,n}&0 \\
    D_{\infty,n} &\Gamma(h^{n-1})\\
  \end{array}
\right]$$ is TP$_{r_2}$ for all $n$. In particular, the submatrix
$\left[
  \begin{array}{ccccc}
    C_{n,n} \\
   D_{\infty,n} \\
  \end{array}\right]$ is TP$_{r_2}$
for all $n$. Thus $F$ is TP$_{r_2}$.

\textbf{The second proof:\footnote{\quad This method is actually the
same as that of \cite{CW19} for total positivity, where it was given
as a constructed method.}} Note that $F_{n,k}=[t^n]f^k(t)=[t^n]f(t)
f^{k-1}(t)$ for $k\geq1$. Let $f(t)=\sum_{n\geq0}f_nt^n$. Then we
have
\begin{eqnarray}\label{F+f}
F_{n,k}=\sum_{j=k}^nf_{n-j+1}F_{j-1,k-1}
\end{eqnarray}
for $n,k\geq1$. Let $\overrightarrow{F_n}$ denote the matrix
consisting of columns from $1$ to $n$ and $F_n$ denote the matrix
consisting of columns from $0$ to $n-1$ of $F$. It follows from
(\ref{F+f}) that
$$\overrightarrow{F_n}=\Gamma(f(t)/t)F_n.$$
Thus in terms of Lemma \ref{lem+prod+TP} and total positivity of
$\Gamma(f)$, we obtain that $F_n$ is TP$_{r_2}$ by induction on $n$.
So the matrix $F$ is TP$_{r_2}$.

 (ii) Assume that $1/\bar{f}'(t)$ is a P\'olya frequency
function of order of $r_2$.

For the exponential Riordan array $(1,f(t))$, by Proposition
\ref{prop+TP+production}, there exist two functions $Z(t)$ and
$A(t)$ such that
$$Z(t)=\frac{g'(\bar{f}(t))}{g(\bar{f}(t))}=0,\quad
A(t)=f'(\bar{f}(t)).$$ On the other hand, it follows from
$f(\bar{f}(t))=t$ that
$$f'(\bar{f}(t))=\frac{1}{\bar{f}'(t)},$$
where $\bar{f}'(t)$ is the derivative function of $\bar{f}(t)$.
Thus, we have
\begin{eqnarray}\label{A+f}
A(t)=\frac{1}{\bar{f}'(t)}.
\end{eqnarray}  In consequence, $A(t)$ is a P\'olya frequency
of order $r_2$ in terms of the P\'olya frequency of order $r_2$ of
$1/\bar{f}'(t)$ and (\ref{A+f}). So the Toeplitz matrix
$\Gamma(A)=[a_{i-j}]_{i,j\geq0}$ is TP$_{r_2}$.

Let $\Theta=\left[
  \begin{array}{cccccccc}
    0&1&& \\
    &0&2&\\
   & &0&3&&\\
   & &&0&4&\\
 &&&&\ddots&\ddots \\
  \end{array}
\right],$ $\Lambda=\left[
  \begin{array}{cccccccc}
    0!&&& \\
    &1!&&\\
   & &2!&&&\\
   & &&3!&&\\
 &&&&\ddots \\
  \end{array}
\right]$. Then the production matrix $P=(p_{i,j})_{i,j\geqslant0}$
of $(1,f(t))$ satisfies
$$p_{i,j}=\frac{i!}{j!}ja_{i-j+1},$$ for $i,j\geqslant 0$, i.e.,
\begin{eqnarray*}
P&=&\Lambda \Gamma(A)\Theta\Lambda^{-1}.
\end{eqnarray*}
In consequence, by Lemma \ref{lem+factor+TP}, we immediately get
that the production matrix $P$ is TP$_{r_2}$. Then by Lemma
\ref{lem+TP+production}, we have the exponential Riordan array
$(1,f(t))$ is TP$_{r_2}$. This completes the proof.
\end{proof}

\begin{rem}\label{rem+TP}
For an exponential Riordan array $(g(t),f(t))$, it is often called a
proper exponential Riordan array for $g(0)f'(0)\neq0$ and $f(0)=0$.
If we drop the restricted condition, then it is called {\it a
general exponential Riordan array}. From the proof of Theorem
\ref{thm+ERA+TP}, it is obvious that the general exponential Riordan
arrays $(g(t)e^{\lambda f(t)},f(t))$ and $(g(f(t))e^{\lambda
f(t)},f(t))$ are also $\lambda$-TP$_r$ if both $g(t)$ and $f(t)$ are
PF$_r$ functions.
\end{rem}

\begin{rem}
We can repeatedly use Theorem \ref{thm+ERA+TP} to get total
positivity of more exponential Riordan arrays whose $f(t)$ and
$g(t)$ are not both PF.
\end{rem}

\textbf{Proof of Theorem \ref{thm+Sheffer+triangle}:}

(i) By (\ref{generating funtion+GR}), using the exponential Riordan
array, we have
$$\left[S_{n,k}(f,g)\right]_{n,k\geq0}=(g(t),f(t)),\quad
\left[S_{n,k}(f,g(f))\right]_{n,k\geq0}=(g(f(t)),f(t)).$$ Thus by
Theorem \ref{thm+ERA+TP}, we get that both
$\left[S_{n,k}(f,g)\right]_{n,k\geq0}$ and
$\left[S_{n,k}(f,g(f))\right]_{n,k\geq0}$ are TP$_{r}$, where
$r=\min\{r_1,r_2\}$.

(ii) By (\ref{f-Sheffer+function}), we have
$S_{n,k}(f,g)=n!A_{n,k}(f,g)$ and
$S_{n,k}(f,g(f))=n!A_{n,k}(f,g(f))$. Then (ii) immediately follows
from (i) and Lemma \ref{lem+factor+TP}.

 (iii) Using the property of the partial complete Bell polynomial
\cite[Theorem A, p.140]{Com74}, we get
$$L_n(f;q)=\sum_{k=1}^n(k-1)!S_{n,k}(f,1)q^k.$$
So we have $L_{n,k}(f)=(k-1)!S_{n,k}(f,1).$  Then by Lemma
\ref{lem+factor+TP} and (i), we immediately get that
$\left[L_{n,k}(f)\right]_{n,k\geq0}$ is TP$_{r}$.

(iv) In what follows we will show that
$\left[\widetilde{L}_{n,k}(f)\right]_{n,k\geq0}$ is TP$_{r}$.

By taking derivative in $q$ of
\begin{eqnarray}
\log\left(1-qf(t)\right)&=&\sum_{n\geq1}-L_n(f;q)\frac{t^n}{n!}=\sum_{n\geq1}-\left(\sum_{k=1}^nL_{n,k}(f)q^k\right)\frac{t^n}{n!},
\end{eqnarray}
we get
\begin{eqnarray}
\frac{-f(t)}{1-qf(t)}&=&\sum_{n\geq1}-L'_n(f;q)\frac{t^n}{n!}=\sum_{n\geq1}-\left(\sum_{k=1}^nkL_{n,k}(f)q^{k-1}\right)\frac{t^n}{n!}.
\end{eqnarray}
Then we have
\begin{eqnarray}
\frac{qf(t)}{1-qf(t)}&=&\sum_{n\geq1}qL'_n(f;q)\frac{t^n}{n!}=\sum_{n\geq1}\left(\sum_{k=1}^nkL_{n,k}(f)q^{k}\right)\frac{t^n}{n!}.
\end{eqnarray}
This implies
\begin{eqnarray}
\frac{1}{1-qf(t)}&=&1+\sum_{n\geq1}qL'_n(f;q)\frac{t^n}{n!}=1+\sum_{n\geq1}\left(\sum_{k=1}^nkL_{n,k}(f)q^{k}\right)\frac{t^n}{n!}.
\end{eqnarray}
Because we define
\begin{eqnarray}
\frac{1}{1-qf(t)}:&=&1+\sum_{n\geq1}\widetilde{L}_n(f;q)\frac{t^n}{n!}=1+\sum_{n\geq1}\left(\sum_{k=1}^n\widetilde{L}_{n,k}(f)q^{k}\right)\frac{t^n}{n!},
\end{eqnarray}
we have the relations
 \begin{eqnarray}\label{eq+L+L}
\widetilde{L}_n(f;q)=qL'_n(f;q),&&
\widetilde{L}_{n,k}(f)=kL_{n,k}(f)=k!S_{n,k}(f,1)
\end{eqnarray}
for $n\geq1$. In consequence, by Lemma \ref{lem+factor+TP} and (i),
we immediately get that
$\left[\widetilde{L}_{n,k}(f)\right]_{n,k\geq0}$ is TP$_{r}$. This
completes the proof of Theorem \ref{thm+Sheffer+triangle}. \qed

\subsection{Total positivity of the Hankel matrix of the $0th$ column}
In what follows we will consider total positivity of the Hankel
matrix of the $0th$ column from the exponential Riordan array.

\begin{thm}\label{thm+Hakel}
Let $\varphi(t)$ be a PF$_r$ function. For an exponential Riordan
array $[R_{n,k}]_{n,k}$, let $Z(t)=(\nu+\omega t)\varphi(t)$ and
$A(t)=(a+bt+ct^2)\varphi(t)$. Then we have the following results.
\begin{itemize}
 \item [\rm (i)]
If the tridiagonal matrix \begin{equation*}\label{J-eq}
J=\left[
\begin{array}{cccccc}
\nu & a &  &  &\\
\omega & \nu+b & 2a &\\
 & \omega+c & \nu+2b &3a &\\
  & &\omega+2c & \nu+3b &4a &\\
& && \ddots&\ddots & \ddots \\
\end{array}\right]
\end{equation*} is $(a,b,c,\nu,\omega)$-TP$_r$, then the lower-triangular matrix $[R_{n,k}]_{n,k}$ and the Hankel matrix $[R_{i+j,0}]_{i,j\geq0}$
are $(a,b,c,\nu,\omega)$-TP$_r$;
 \item [\rm (ii)]
If $\{\nu,\omega,a,b,c\}\subseteq\mathbb{R}^{\geq0}$, $\nu\geq w$
and $b\geq a+c$, then the lower-triangular matrix $[R_{n,k}]_{n,k}$
and the Hankel matrix $[R_{i+j,0}]_{i,j\geq0}$ are TP;
 \item [\rm (iii)]
If $\{\nu,\omega,a,b,c\}\subseteq\mathbb{R}^{\geq0}$, $\nu\geq a$
and $b\geq \max\{a+c,a+\omega\}$, then the lower-triangular matrix
$[R_{n,k}]_{n,k}$ and the Hankel matrix $[R_{i+j,0}]_{i,j\geq0}$ are
TP;
\item [\rm (iv)]
If $\omega=c=0$ and $r\rightarrow \infty$, then the lower-triangular
matrix $[R_{n,k}]_{n,k}$ and the Hankel matrix
$[R_{i+j,0}]_{i,j\geq0}$ are $(a,b,\nu)$-TP.
\item [\rm (v)]
Let $\varphi(t)=\prod_{i=1}^m(1+x_it)$. If $\omega=c=0$ and $a=1$,
then we have the $m$-branched Stieltjes-type continued fraction
\begin{eqnarray*}
   \sum_{n\geq0}R_{n,0}t^n
   & = &
   \cfrac{1}
         {1 \,-\, \alpha_{m} t
            \prod\limits_{i_1=1}^{m}
                 \cfrac{1}
            {1 \,-\, \alpha_{m+i_1} t
               \prod\limits_{i_2=1}^{m}
               \cfrac{1}
            {1 \,-\, \alpha_{m+i_1+i_2} t
               \prod\limits_{i_3=1}^{m}
               \cfrac{1}{1 - \cdots}
            }
           }
         }
%
\end{eqnarray*}
with coefficients $$(\alpha_{i})_{i\geq
m}=(\nu,\underbrace{x_1,\ldots,x_m}_{m},\nu+b,\underbrace{2x_1,\ldots,2x_m}_{m},\nu+2b,\underbrace{3x_1,\ldots,3x_m}_{m},\ldots).$$
 \end{itemize}
\end{thm}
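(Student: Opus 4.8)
The plan is to read the hypotheses as data about the production matrix of the (rescaled) exponential Riordan array and then feed that matrix into Lemma~\ref{lem+TP+production}. Write $\varphi(t)=\sum_{n\geq0}\varphi_nt^n$ and pass to the array $\mathfrak{R}=[\mathfrak{R}_{n,k}]=[k!\,R_{n,k}]$ of Proposition~\ref{prop+k+production}, for which $\overline{\mathfrak{R}}=\mathfrak{R}\mathcal{P}$ with $\mathcal{P}_{i,j}=z_{i-j}+j\,a_{i-j+1}$, where $Z(t)=\sum z_nt^n=(\nu+\omega t)\varphi(t)$ and $A(t)=\sum a_nt^n=(a+bt+ct^2)\varphi(t)$. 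Comparing coefficients gives $z_n=\nu\varphi_n+\omega\varphi_{n-1}$ and $a_n=a\varphi_n+b\varphi_{n-1}+c\varphi_{n-2}$. After normalizing $g(0)=1$ (this rescales the whole array by a positive constant and changes neither $Z$, $A$, nor total positivity, by Lemma~\ref{lem+factor+TP}), we have $\mathfrak{R}_{0,0}=1$ and $\mathfrak{R}_{i+j,0}=R_{i+j,0}$, so by Lemma~\ref{lem+TP+production} it suffices to prove that $\mathcal{P}$ is $(a,b,c,\nu,\omega)$-TP$_r$.

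The key step is the factorization
\begin{equation*}
\mathcal{P}=\Gamma(\varphi)\,J,
\end{equation*}
where $\Gamma(\varphi)=[\varphi_{i-j}]_{i,j\geq0}$ is the Toeplitz matrix of $\varphi$ and $J$ is the tridiagonal matrix of the statement. Indeed, expanding $(\Gamma(\varphi)J)_{i,j}=\sum_k\varphi_{i-k}J_{k,j}$ and inserting $J_{j-1,j}=ja$, $J_{j,j}=\nu+jb$, $J_{j+1,j}=\omega+jc$ reproduces exactly $\nu\varphi_{i-j}+\omega\varphi_{i-j-1}+j(a\varphi_{i-j+1}+b\varphi_{i-j}+c\varphi_{i-j-1})=\mathcal{P}_{i,j}$. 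Since $\varphi$ is PF$_r$, by definition $\Gamma(\varphi)$ is TP$_r$, and this reduces every assertion to a total-positivity statement about the single tridiagonal factor $J$.

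For (i) this is immediate: if $J$ is $(a,b,c,\nu,\omega)$-TP$_r$, then by Lemma~\ref{lem+prod+TP} the product $\mathcal{P}=\Gamma(\varphi)J$ is $(a,b,c,\nu,\omega)$-TP$_r$, whence Lemma~\ref{lem+TP+production} gives that $[\mathfrak{R}_{n,k}]$ and $[\mathfrak{R}_{i+j,0}]=[R_{i+j,0}]$ are $(a,b,c,\nu,\omega)$-TP$_r$, and Lemma~\ref{lem+factor+TP} transfers the triangular statement to $[R_{n,k}]$. For (iv) the case $\omega=c=0$ makes $J$ upper bidiagonal with nonnegative diagonal $\nu+ib$ and superdiagonal $(i+1)a$, so $J$ is manifestly $(a,b,\nu)$-TP; with $r\to\infty$ (that is, $\varphi$ an honest PF function) the conclusion follows from (i).

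The remaining content, and the main obstacle, is to show that the numerical tridiagonal $J$ is TP under the inequalities of (ii) and (iii). For this I would use the explicit $LU$-factorization $J=LU$ with $L$ unit lower bidiagonal (subdiagonal $\gamma_i$) and $U$ upper bidiagonal (diagonal $\beta_i$, superdiagonal $(i+1)a$), determined by $\beta_0=\nu$, $\gamma_i=(\omega+(i-1)c)/\beta_{i-1}$, $\beta_i=\nu+ib-ia\gamma_i$; since nonnegative bidiagonal matrices are TP, it is enough to check $\beta_i,\gamma_i\geq0$ for all $i$. Under (ii), i.e. $\nu\geq\omega$ and $b\geq a+c$, I would prove by induction the invariant $\beta_i\geq\omega+ic$, which forces $\gamma_i\in[0,1]$ and keeps $\beta_i$ nonnegative; under (iii), i.e. $\nu\geq a$ and $b\geq\max\{a+c,a+\omega\}$, the correct invariant is instead $\beta_i\geq(i+1)a$, obtained from $ib-(i-1)c\geq b+(i-1)a$ together with $b-\omega\geq a$ and $\nu\geq a$. (If some $\beta_{i-1}=0$ then $\omega=c=0$, reducing to the trivial bidiagonal case of (iv).) Finally, for (v) I would combine the bidiagonal splitting $\Gamma(\varphi)=\prod_{i=1}^m\Gamma(1+x_it)$ with the now upper-bidiagonal $J$ (as $\omega=c=0$, $a=1$) to write $\mathcal{P}$ as a product of bidiagonal matrices, and then invoke the correspondence of \cite{PSZ18} between such factored production matrices and $m$-branched Stieltjes-type continued fractions; reading the diagonal data $\nu+kb$ of $J$ and the subdiagonals $(k+1)x_i$ off this product yields the stated coefficients $(\alpha_i)_{i\geq m}$. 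I expect the bookkeeping in (v) and the selection of the correct invariant in (iii) to be the delicate points.
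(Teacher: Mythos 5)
Your proposal is correct, and its skeleton coincides with the paper's: the paper likewise proves everything by factoring the production matrix and feeding it to Lemma~\ref{lem+TP+production}, writing $P=\Lambda\,\Gamma(\varphi)\,J\,\Lambda^{-1}$ for the unrescaled array, which is exactly your $\mathcal{P}=\Gamma(\varphi)J$ conjugated by the factorial diagonal $\Lambda$ (a cosmetic difference, justified by Lemma~\ref{lem+factor+TP}); part (iv) is handled by the same bidiagonal observation, and part (v) by the same appeal to Propositions 7.2 and 8.2(b) of \cite{PSZ18}. Where you genuinely depart is in (ii) and (iii): the paper simply cites \cite{CLW15,Zhu202} for the total positivity of the tridiagonal $J$ under those inequalities, whereas you prove it from scratch via the factorization $J=LU$ with the invariants $\beta_i\geq\omega+ic$ (case (ii)) and $\beta_i\geq(i+1)a$ (case (iii)). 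Both invariants do propagate — for (iii), $ib=b+(i-1)b\geq(a+\omega)+(i-1)(a+c)=ia+\omega+(i-1)c$ is exactly what is needed — so your treatment buys a self-contained proof at the cost of some case analysis, while the paper's citation is shorter but not self-contained. Two small repairs to your sketch. First, the claim that $\beta_{i-1}=0$ forces $\omega=c=0$ fails for $i=1$ (take $\nu=\omega=0$, $c>0$); what the invariant actually guarantees is that the subdiagonal entry $\omega+(i-1)c$ vanishes whenever $\beta_{i-1}=0$, so one may set $\gamma_i=0$ and continue, or alternatively perturb to strict positivity and use that TP$_r$ is preserved under entrywise limits. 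Second, in (v) your product $\prod_{i=1}^m\Gamma(1+x_it)\cdot J$ has \emph{constant} subdiagonals $x_i$ and superdiagonal entries $k+1$, which is not yet the canonical form of \cite{PSZ18}; the subdiagonals $(k+1)x_i$ and unit superdiagonal that you propose to read off appear only after conjugating back by $\Lambda$, i.e.\ after passing to $P=\Lambda\mathcal{P}\Lambda^{-1}$ — which is precisely why the paper runs the computation with $P$ rather than $\mathcal{P}$. This step is harmless (both arrays have the same zeroth column, so either production matrix identifies $\sum_{n\geq0}R_{n,0}t^n$), but it must be stated.
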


\begin{proof}
(i) In terms of Lemma \ref{lem+TP+production}, it suffices to prove
that the production matrix of $[R_{n,k}]_{n,k}$ is
$(a,b,c,\nu,\omega)$-TP$_r$. For the exponential Riordan array
$[R_{n,k}]_{n,k}$ with
$$Z(t)=(\nu+\omega t)\varphi(t),\quad A(t)=(a+bt+ct^2)\varphi(t),$$
it follows from (\ref{PP}) that its production matrix
\begin{eqnarray}\label{PPP}
P&=&\Lambda \Gamma(Z)\Lambda^{-1}+\Lambda \Gamma(A)\Theta\Lambda^{-1}\nonumber\\
&=&\Lambda \Gamma(\varphi)\left(\Gamma(\nu+\omega t)+
\Gamma(a+bt+ct^2)\left[
  \begin{array}{cccccccc}
    0&1&& \\
    &0&2&\\
   & &0&3&&\\
   & &&0&4&\\
 &&&&\ddots&\ddots \\
  \end{array}
\right]\right)\Lambda^{-1}\nonumber \\
 &=&\Lambda
\Gamma(\varphi)\left[
\begin{array}{cccccc}
\nu & a &  &  &\\
\omega & \nu+b & 2a &\\
 & \omega+c & \nu+2b &3a &\\
  & &\omega+2c & \nu+3b &4a &\\
& && \ddots&\ddots & \ddots \\
\end{array}\right]\Lambda^{-1}.
\end{eqnarray}
Because the infinite Toeplitz matrix $\Gamma(\varphi)$ and the
tridiagonal matrix $J$ are $(a,b,c,\nu,\omega)$-TP$_{r}$, applying
Lemma \ref{lem+prod+TP} to (\ref{PPP}), we immediately have $P$ is
$(a,b,c,\nu,\omega)$-TP$_{r}$. This shows that (i) holds.

(ii) and (iii) are immediate from (i) because the conditions in (ii)
and (iii) are sufficient to establish total positivity of $J$ (see
\cite{CLW15,Zhu202} for instance).

(iv) If $\omega=c=0$, then the tridiagonal matrix $J$ reduces to an
upper-bidiagonal matrix$$\left[
\begin{array}{cccccc}
\nu & a &  &  &\\
 & \nu+b & 2a &\\
 &  & \nu+2b &3a &\\
  & & & \nu+3b &4a &\\
& && \ddots&\ddots & \ddots \\
\end{array}\right],$$ which is $(a,b,\nu)$-TP. By (i), we immediately get that both the
lower-triangular matrix $[R_{n,k}]_{n,k}$ and the Hankel matrix
$[R_{i+j,0}]_{i,j\geq0}$ are $(a,b,\nu)$-TP.

(v) Let $\varphi(t)=\prod_{i=1}^m(1+x_it)$, $\omega=c=0$ and $a=1$.
Then we have
\begin{eqnarray*} P &=&\Lambda \Gamma(\varphi)\left[
\begin{array}{cccccc}
\nu & a &  &  &\\
\omega & \nu+b & 2a &\\
 & \omega+c & \nu+2b &3a &\\
  & &\omega+2c & \nu+3b &4a &\\
& && \ddots&\ddots & \ddots \\
\end{array}\right]\Lambda^{-1}\\
&=&
\left(\prod_{i=1}^m\Lambda\Gamma(1+x_it)\Lambda^{-1}\right)\Lambda\left[
\begin{array}{cccccc}
\nu & 1 &  &  &\\
& \nu+b & 2 &\\
 &  & \nu+2b &3 &\\
  & & & \nu+3b &4 &\\
& && &\ddots & \ddots \\
\end{array}\right]\Lambda^{-1}
\end{eqnarray*}
\begin{eqnarray*}
&=& \left(\prod_{i=1}^m\left[
  \begin{array}{ccccccc}
    1&& \\
    x_i&1&\\
    &2x_i&1&&\\
    &&3x_i&1&\\
 &&&\ddots&\ddots \\
  \end{array}
\right]\right)\left[
\begin{array}{cccccc}
\nu & 1 &  &  &\\
& \nu+b & 1 &\\
 &  & \nu+2b &1 &\\
  & & & \nu+3b &1 &\\
& && &\ddots & \ddots \\
\end{array}\right],
\end{eqnarray*}
which in terms of Proposition 7.2 and Proposition 8.2 (b) in
\cite{PSZ18} is exactly the production matrix for the $m$-branched
Stieltjes-type continued fraction
\begin{eqnarray*}
   \sum_{n\geq0}R_{n,0}t^n
   & = &
   \cfrac{1}
         {1 \,-\, \alpha_{m} t
            \prod\limits_{i_1=1}^{m}
                 \cfrac{1}
            {1 \,-\, \alpha_{m+i_1} t
               \prod\limits_{i_2=1}^{m}
               \cfrac{1}
            {1 \,-\, \alpha_{m+i_1+i_2} t
               \prod\limits_{i_3=1}^{m}
               \cfrac{1}{1 - \cdots}
            }
           }
         }
%
\end{eqnarray*}
with coefficients $(\alpha_{i})_{i\geq
m}=(\nu,\underbrace{x_1,\ldots,x_m}_{m},\nu+b,\underbrace{2x_1,\ldots,2x_m}_{m},\nu+2b,\underbrace{3x_1,\ldots,3x_m}_{m},\ldots).$
\end{proof}
\begin{rem}
The $m$-branched Stieltjes-type continued fraction plays an
important role in the coefficientwise Hankel-total positivity. It
was shown that many combinatorial generating functions have
$m$-branched Stieltjes-type continued fraction expansions. We refer
the reader to \cite{PSZ18} for more details.
\end{rem}

\begin{prop}\label{prop+recp}
Let $A_n(q)$ be a polynomial of degree $n$. If $[A_{i+j}(q)]_{i,j}$
is $q$-TP$_r$, then so is $[A^*_{i+j}(q)]_{i,j}$.
\end{prop}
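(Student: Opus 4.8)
The plan is to reduce the reciprocal Hankel matrix to the original one by a diagonal conjugation combined with the substitution $q\mapsto 1/q$, and then to control the appearance of negative powers of $q$ through a degree bound. Write $H=[A_{i+j}(q)]_{i,j\geq0}$ and $H^*=[A^*_{i+j}(q)]_{i,j\geq0}$. Since $A^*_n(q)=q^nA_n(1/q)$, the $(i,j)$ entry of $H^*$ is $q^{i+j}A_{i+j}(1/q)=q^i\cdot A_{i+j}(1/q)\cdot q^j$. Thus, setting $D=\mathrm{diag}(1,q,q^2,\ldots)$ and letting $\widetilde{H}=[A_{i+j}(1/q)]_{i,j}$ denote $H$ with $q$ replaced by $1/q$, I would record the identity $H^*=D\,\widetilde{H}\,D$.

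Next I would evaluate an arbitrary minor. Fix row indices $I=\{i_1<\cdots<i_k\}$ and column indices $J=\{j_1<\cdots<j_k\}$ with $k\leq r$. Because $D$ is diagonal, the corresponding minor of $H^*$ factors as
$$\det H^*[I;J]=q^{\sigma(I)+\sigma(J)}\det\widetilde{H}[I;J],\qquad \sigma(I)=\sum_a i_a,\ \ \sigma(J)=\sum_b j_b.$$
By hypothesis $H$ is $q$-TP$_r$, so $\det H[I;J]=\sum_{m\geq0}c_mq^m$ with all $c_m\geq0$; replacing $q$ by $1/q$ gives $\det\widetilde{H}[I;J]=\sum_m c_mq^{-m}$, whence $\det H^*[I;J]=\sum_m c_mq^{\sigma(I)+\sigma(J)-m}$. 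The coefficients $c_m$ are already nonnegative, so the only remaining issue is whether any negative power of $q$ survives, that is, whether this Laurent polynomial is genuinely a polynomial.

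That final point is the crux, and it is settled by a degree estimate. Since $\deg_q A_{i+j}(q)=i+j$, the Leibniz expansion of the determinant shows $\deg_q\det H[I;J]\leq\max_\pi\sum_a(i_a+j_{\pi(a)})=\sigma(I)+\sigma(J)$, the permutation $\pi$ being irrelevant to the total. Hence $c_m=0$ whenever $m>\sigma(I)+\sigma(J)$, so every exponent $\sigma(I)+\sigma(J)-m$ appearing above is nonnegative and $\det H^*[I;J]\in\mathbb{R}^{\geq0}[q]$. As $I$, $J$ and $k\leq r$ were arbitrary, $H^*$ is $q$-TP$_r$. I expect the diagonal-conjugation identity and the degree bound to be routine; the one place demanding care is making the bookkeeping explicit enough to guarantee that the substitution $q\mapsto 1/q$ introduces no poles in the minors, which is exactly what the degree inequality secures.
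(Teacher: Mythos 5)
Your proof is correct and follows essentially the same route as the paper's: both factor $q^{i_a}$ out of the rows and $q^{j_b}$ out of the columns of each minor of $[A^*_{i+j}(q)]_{i,j}$ to reduce it to $q^{\sigma(I)+\sigma(J)}$ times the corresponding minor of $[A_{i+j}(q)]_{i,j}$ evaluated at $1/q$, then invoke the degree bound $\deg_q \det H[I;J]\leq\sigma(I)+\sigma(J)$ to rule out negative powers. The only difference is cosmetic: you justify the degree bound explicitly via the Leibniz expansion, whereas the paper asserts it directly.
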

\begin{proof}
For $k\leq r$, let $\mathcal
{A}^{i_1,i_2,\ldots,i_k}_{j_1,j_2,\ldots,j_k}(q)$ (resp. $\mathcal
{B}^{i_1,i_2,\ldots,i_k}_{j_1,j_2,\ldots,j_k}(q)$) be the minor of
$[A_{i+j}(q)]_{i,j}$ (resp. $[A^*_{i+j}(q)]_{i,j}$) by taking its
rows ${i_1,i_2,\ldots,i_k}$ and columns ${j_1,j_2,\ldots,j_k}$. In
terms of the assumption that $[A_{i+j}(q)]_{i,j}$ is $q$-TP$_r$,
$\mathcal {A}^{i_1,i_2,\ldots,i_k}_{j_1,j_2,\ldots,j_k}(q)$ is a
nonnegative coefficients polynomial of degree $\leq
i_1+i_2+\cdots+i_k+j_1+j_2+\cdots+j_k$. Since
$A^*_{i+j}(q)=q^{i+j}A_{i+j}(1/q)$, taking out
$q^{i_1},q^{i_2},\ldots,q^{i_k}$ from rows and
$q^{j_1},q^{j_2},\ldots,q^{j_k}$ from columns of $\mathcal
{B}^{i_1,i_2,\ldots,i_k}_{j_1,j_2,\ldots,j_k}(q)$, respectively, we
get
\begin{eqnarray*}
\mathcal
{B}^{i_1,i_2,\ldots,i_k}_{j_1,j_2,\ldots,j_k}(q)&=&q^{i_1+i_2+\cdots+i_k+j_1+j_2+\cdots+j_k}\mathcal
{A}^{i_1,i_2,\ldots,i_k}_{j_1,j_2,\ldots,j_k}(1/q).
\end{eqnarray*}
In consequence, $\mathcal
{B}^{i_1,i_2,\ldots,i_k}_{j_1,j_2,\ldots,j_k}(q)$ is a nonnegative
coefficients polynomial. This completes the proof.
\end{proof}

By Theorem \ref{thm+Hakel} and Proposition \ref{prop+recp}, we get
the next result, which in particular implies Theorem
\ref{thm+Sheffer+polynomial}.

\begin{thm}\label{thm+Hankel+Rior}
Let $[E_{n,k}(f;\lambda)]_{n,k\geq0}=(\exp\left(\lambda
f(t)\right),f(t))$ and
$E_{n}(f;\lambda,q)=\sum_{k\geq0}E_{n,k}(f;\lambda)q^k$. If
$1/\bar{f}'(t)$ is a PF$_{r}$ function, then we have
\begin{itemize}
 \item [\rm (i)] the Hankel matrices
$[E_{i+j}(f;\lambda,q)]_{i,j\geq0}$ and
$[E^*_{i+j}(f;\lambda,q)]_{i,j\geq0}$ are $(\lambda,q)$-TP$_r$;
 \item [\rm (ii)]
the exponential Riordan array $[E_{n,k}(f;\lambda)]_{n,k\geq0}$ is
$\lambda$-TP$_r$;
 \item [\rm (iii)]
the Hankel matrix $[f_{i+j+1}]_{i,j\geq0}$ is TP$_r$;
 \item [\rm (iv)]
 the exponential Riordan array $(f'(x),f(x))$ is TP$_r$;
  \item [\rm (v)]
 the row-generating function of $(f'(x),f(x))$ forms a
$q$-Hankel-TP$_r$ sequence.
 \end{itemize}
\end{thm}

\begin{proof}
 For the exponential Riordan array
$(\exp\left(\lambda f(t)\right),f(t))$, by (\ref{generating
funtion+GR}), we have
$$\sum_{n\geq0}E_{n}(f;\lambda,q)\frac{t^n}{n!}=\exp\left(\lambda f(t)\right)\exp\left(qf(t)\right)=\exp\left((\lambda+q)f(t)\right).$$
In order to prove that the Hankel matrix
$[E_{i+j}(f;\lambda,q)]_{i,j\geq0}$ is $(\lambda,q)$-TP$_r$, it
suffices to prove that $[E_{i+j}(f;\lambda,0)]_{i,j\geq0}$ is
$\lambda$-TP$_r$ by taking $\lambda\rightarrow \lambda+q$.

For the exponential Riordan array $(\exp\left(\lambda
f(t)\right),f(t))$, by Proposition \ref{prop+TP+production}, there
exist two functions $Z(t)$ and $A(t)$ such that
$$Z(t)=\frac{\lambda}{\bar{f}'(t)},\quad
A(t)=\frac{1}{\bar{f}'(t)}.$$ It follows from (iv) of Theorem
\ref{thm+Hakel} that $[E_{n,k}(f;\lambda)]_{n,k\geq0}$ and
$[E_{i+j}(f;\lambda,0)]_{i,j\geq0}$ are $\lambda$-TP$_r$.
Furthermore, it follows from
$E^*_{n}(f;\lambda,q)=q^nE_{n}(f;\lambda,1/q)$ and Proposition
\ref{prop+recp} that $[E^*_{i+j}(f;\lambda,q)]_{i,j\geq0}$ is
$(\lambda,q)$-TP$_r$. This gives (i) and (ii).

By taking derivative in $t$ of
\begin{eqnarray}
\exp\left(\lambda
f(t)\right)&=&\sum_{n\geq0}E_n(f;\lambda,0)\frac{t^n}{n!},
\end{eqnarray}
we get
\begin{eqnarray}
\lambda f'(t)\exp\left(\lambda
f(t)\right)&=&\sum_{n\geq1}E_n(f;\lambda,0)\frac{t^{n-1}}{(n-1)!}.
\end{eqnarray}
That is \begin{eqnarray}\label{fucntion} f'(t)\exp\left(\lambda
f(t)\right)&=&\sum_{n\geq0}\frac{E_{n+1}(f;\lambda,0)}{\lambda}\frac{t^{n}}{n!}.
\end{eqnarray}

Since the Hankel matrix $[E_{i+j}(f;\lambda,0)]_{i,j\geq0}$ is
$\lambda$-TP$_r$ and the constant term of $E_n(f;\lambda,0)$ is
zero, $[\frac{E_{i+j+1}(f;\lambda,0)}{\lambda}]_{i,j\geq0}$ is
$\lambda$-TP$_r$. In particular, for $\lambda=0$, it reduces to that
$[f_{i+j+1}]_{i,j\geq0}$ is TP$_r$ by (\ref{fucntion}). In addition,
(\ref{fucntion}) implies
$$(f'(x),f(x))=[E_{n+1,k+1}(f;\lambda)]_{n,k\geq0}.$$ Then (iv) and (v) immediately follow from (i) and (ii).  This completes the
proof.
\end{proof}

\textbf{The proof of Theorem \ref{thm+Sheffer+convolution}:}

(i) and (ii) follow from Theorem \ref{thm+Sheffer+polynomial} and
Lemma \ref{lem+3-q-log-convex}.

For (iii), it follows from Theorem \ref{thm+ERA+TP}. Note a fact
that if a matrix is TP then so is its singless inverse, see
\cite{Pin10} for instance. Thus for (iv), it suffices to prove that
both $[L_{n,k}(f)]_{n,k}$ and $[\widetilde{L}_{n,k}(f)]_{n,k}$ are
TP, which follows from Theorem \ref{thm+Sheffer+triangle} (iii) and
(iv).

For (v), by (i) and Lemma \ref{lem+conv}, the convolution
$$z_n=\sum_{k=0}^nS_{n,k}(f,g)x_ky_{n-k}$$
preserves the SM property for $\lambda\geq0$. Note that we have
proved $\widetilde{L}_{n,k}(f)=k!S_{n,k}(f,1)$ in the proof of
Theorem \ref{thm+Sheffer+triangle}. Let $w_k=k!x_k$, which is a
Stieltjes moment sequence since the Hadamard product of two
Stieltjes moment sequences is still a Stieltjes moment sequence.
Then we get that the convolution
$$z_n=\sum_{k=0}^n\widetilde{L}_{n,k}(f)x_ky_{n-k}=\sum_{k=0}^nS_{n,k}(f,1)w_ky_{n-k}$$ preserves the SM property. In addition,
by Theorem \ref{thm+Hankel+Rior} (v) and Lemma \ref{lem+conv}, we
immediately get that the convolution
$$z_n=\sum_{k=0}^nS_{n+1,k+1}(f,1)x_ky_{n-k}$$ also preserves the SM
property.

Finally, for (vi), it follows from the proof of Theorem
\ref{thm+Hankel+Rior} that $$Z(t)=\frac{\lambda}{\bar{f}'(t)},\quad
A(t)=\frac{1}{\bar{f}'(t)}.$$ Then the first statement for $q=0$ of
(vi) is immediate from Theorem \ref{thm+Hakel} (v). By taking
$\lambda\rightarrow \lambda+q$, we get the first statement.
Moreover, the second statement is from the first one by taking
$q\rightarrow 1/q$ and $t\rightarrow qt$. This completes the proof.

\begin{thm}\label{thm+Hakel+gamma}
Define an exponential Riordan array
$$[R_{n,k}]_{n,k}=\left([ f(t)+1]^{\gamma}\exp(\lambda f(t)),f(t)\right).$$ If
$1/[\overline{f}'(t)(1+t)]$ is a PF$_r$ function, then we have
\begin{itemize}
 \item [\rm (i)]
the triangular matrix $[R_{n,k}]_{n,k}$ is
$(\gamma,\lambda)$-TP$_r$;
 \item [\rm (ii)]
 the Hankel matrices $[T_{i+j}(q)]_{i,j\geq0}$ and $[T^*_{i+j}(q)]_{i,j\geq0}$ are $(\gamma,q)$-TP$_{r}$, where
$\sum_{n\geq0}T_n(q)\frac{t^n}{n!}=[ f(t)+1]^{\gamma}\exp(qf(t));$
 \item [\rm (iii)]
the Hankel matrix $[f_{i+j+1}]_{i,j\geq0}$ is TP$_r$;
 \item [\rm (iv)]
 the Hankel matrices $[R_{i+j}(x)]_{i,j\geq0}$ and $[R^*_{i+j}(x)]_{i,j\geq0}$ are $(\gamma,\lambda,x)$-TP$_{r}$, where $R_n(x)=\sum_{k=0}^nR_{n,k}x^k$.
 \end{itemize}
\end{thm}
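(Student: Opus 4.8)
The plan is to realize $[R_{n,k}]_{n,k}=\left([f(t)+1]^{\gamma}e^{\lambda f(t)},f(t)\right)$ as an exponential Riordan array $(g,f)$ and feed it into Theorem \ref{thm+Hakel}. First I would compute the production data via Proposition \ref{prop+TP+production}. Since $\frac{g'}{g}=\gamma\frac{f'}{f+1}+\lambda f'$ and $f'(\bar f(t))=1/\bar f'(t)$, one obtains
$$Z(t)=\frac{\gamma+\lambda(1+t)}{(1+t)\bar{f}'(t)},\qquad A(t)=\frac{1}{\bar{f}'(t)}=\frac{1+t}{(1+t)\bar{f}'(t)}.$$
Writing $\varphi(t):=1/[(1+t)\bar{f}'(t)]$, which is PF$_r$ by hypothesis, this is exactly the shape required by Theorem \ref{thm+Hakel}, namely $Z=(\nu+\omega t)\varphi$ and $A=(a+bt+ct^2)\varphi$ with $\nu=\gamma+\lambda$, $\omega=\lambda$, $a=b=1$, $c=0$. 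Thus the whole theorem reduces to the coefficientwise total positivity of the associated tridiagonal matrix $J$, whose diagonal is $\gamma+\lambda+k$, superdiagonal $k+1$, and subdiagonal $\lambda$.

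The key step, and the main obstacle, is to prove that this $J$ is $(\gamma,\lambda)$-TP$_r$. The favourable feature is that $\nu-\omega=\gamma$ and $b-a-c=0$ are nonnegative coefficientwise, which are precisely the inequalities $\nu\ge\omega$, $b\ge a+c$ appearing in Theorem \ref{thm+Hakel}(ii); what I need is the coefficientwise strengthening of that criterion. Since $J$ is tridiagonal with entries in $\mathbb{R}^{\ge 0}[\gamma,\lambda]$, total positivity reduces to the nonnegativity of the contiguous principal minors $D_{i,j}=\det J[i,\ldots,j]$, which satisfy the continuant recurrence $D_{i,j}=(\gamma+\lambda+j)D_{i,j-1}-j\lambda\,D_{i,j-2}$. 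I expect the delicate point to be controlling the cancellation of the subtracted term $j\lambda D_{i,j-2}$; it is the identity $\nu-\omega=\gamma$ that makes the expansion come out with nonnegative coefficients, and I would prove $D_{i,j}\in\mathbb{R}^{\ge 0}[\gamma,\lambda]$ by induction on $j$, possibly after strengthening the inductive hypothesis (cf.\ \cite{CLW15,Zhu202}). Granting that $J$ is $(\gamma,\lambda)$-TP$_r$, Theorem \ref{thm+Hakel}(i) immediately gives that both the triangular matrix $[R_{n,k}]$ and the Hankel matrix of its $0$th column are $(\gamma,\lambda)$-TP$_r$, which settles part (i). As an independent check on (i), one may also factor $[R_{n,k}]=([f+1]^{\gamma},f)\cdot(e^{\lambda t},t)$: the first factor is $\gamma$-TP$_r$ by Theorem \ref{thm+Hakel}(iv) (its $Z,A$ have $\omega=c=0$, so $J$ is upper bidiagonal), and $(e^{\lambda t},t)=[\binom{n}{k}\lambda^{n-k}]$ is $\lambda$-TP, whence Lemma \ref{lem+prod+TP} applies.

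For part (ii) I would observe that $T_n(q)$ is exactly the $0$th column of $([f+1]^{\gamma}e^{qf},f)$, whose production data are obtained from the above by $\lambda\mapsto q$; hence $[T_{i+j}(q)]$ is the $0$th-column Hankel matrix and is $(\gamma,q)$-TP$_r$ by the same application of Theorem \ref{thm+Hakel}(i). The reciprocal statement for $[T^*_{i+j}(q)]$ follows from Proposition \ref{prop+recp} (whose argument applies verbatim with $\gamma$ retained as a nonnegative parameter), using that $T_n(q)$ has degree exactly $n$ because $S_{n,n}=f_1^n\neq0$. For part (iv), since $\sum_n R_n(x)\frac{t^n}{n!}=[f+1]^{\gamma}e^{(\lambda+x)f}$ we have $R_n(x)=T_n(\lambda+x)$, so $[R_{i+j}(x)]$ is obtained from $[T_{i+j}(q)]$ by the coefficientwise-nonnegative substitution $q\mapsto\lambda+x$ and is therefore $(\gamma,\lambda,x)$-TP$_r$, the reciprocal version again coming from Proposition \ref{prop+recp} in the variable $x$.

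Finally, part (iii) follows by specialising part (ii) at $\gamma=0$: then $\sum_n T_n(q)|_{\gamma=0}\frac{t^n}{n!}=e^{qf}$, and differentiating in $t$ gives $f'(t)e^{qf}=\sum_n\frac{T_{n+1}(q)|_{\gamma=0}}{q}\frac{t^n}{n!}$, exactly as in the proof of Theorem \ref{thm+Hankel+Rior}(iii). Since $[T_{i+j}(q)|_{\gamma=0}]$ is $q$-TP$_r$ and these polynomials have zero constant term in $q$, the Hankel matrix $[\frac{T_{i+j+1}(q)|_{\gamma=0}}{q}]_{i,j}$ is $q$-TP$_r$, and setting $q=0$ yields that $[f_{i+j+1}]$ is TP$_r$. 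The only genuinely new work beyond Theorem \ref{thm+Hankel+Rior} is the coefficientwise total positivity of the tridiagonal $J$, which I regard as the crux of the argument.
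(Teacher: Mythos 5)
Your reduction is exactly the paper's: you compute the same production data $Z(t)=(\gamma+\lambda+\lambda t)\varphi(t)$ and $A(t)=(1+t)\varphi(t)$ with $\varphi(t)=1/[(1+t)\bar{f}'(t)]$, feed them into Theorem~\ref{thm+Hakel}(i), and then obtain (ii) via the substitution $\lambda\mapsto q$ together with Proposition~\ref{prop+recp}, (iv) from $R_n(x)=T_n(\lambda+x)$, and (iii) by specializing $\gamma=0$. The one genuine gap is the step you yourself isolate as the crux: the coefficientwise $(\gamma,\lambda)$-total positivity of the tridiagonal matrix $J$. The paper does not prove this from scratch either; it invokes \cite[Proposition~3.3]{Zhu202}, which is precisely this statement. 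You instead sketch a continuant induction and then proceed ``granting'' the conclusion, and as stated the induction does not close: in $D_{i,j}=(\gamma+\lambda+j)D_{i,j-1}-j\lambda D_{i,j-2}$ the subtracted term blocks naive propagation of coefficientwise nonnegativity. Already the leading $3\times3$ minor is
\begin{equation*}
(\gamma+\lambda+2)\bigl[(\gamma+\lambda)^2+\gamma\bigr]-2\lambda(\gamma+\lambda)
=(\gamma+\lambda)^3+3\gamma(\gamma+\lambda)+2\gamma,
\end{equation*}
which is coefficientwise nonnegative only after genuine cancellation, so the ``strengthened inductive hypothesis'' you allude to is the entire content of the missing lemma. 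Either supply that strengthening or cite \cite[Proposition~3.3]{Zhu202}; with that single citation your argument for (i)--(iv) is complete and coincides with the paper's proof.

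Two of your deviations are worth recording. First, your factorization $\left([f+1]^{\gamma}e^{\lambda f},f\right)=\left([f+1]^{\gamma},f\right)*\left(e^{\lambda t},t\right)$ is a complete, self-contained proof of part (i) that the paper does not give: the first factor has $\omega=c=0$, so its $J$ is upper bidiagonal and trivially $\gamma$-TP, whence Theorem~\ref{thm+Hakel}(i) makes that factor $\gamma$-TP$_r$ (cite (i) here rather than (iv), since (iv) is stated only for $r\to\infty$), the second factor is $[\binom{n}{k}\lambda^{n-k}]_{n,k}$, and Lemma~\ref{lem+prod+TP} finishes. This bypasses the tridiagonal lemma entirely, but only for the triangle: the Hankel statements (ii) and (iv) live in the production-matrix machinery of Lemma~\ref{lem+TP+production} and cannot be extracted from this factorization, so they still require the lemma. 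Second, for (iii) the paper's route is cleaner than yours: it notes that $1/\bar{f}'(t)=(1+t)\varphi(t)$ is PF$_r$ and quotes Theorem~\ref{thm+Hankel+Rior}(iii), whose proof uses only the bidiagonal case $\omega=c=0$; your derivation of (iii) from (ii) at $\gamma=0$ still runs through the unproven tridiagonal lemma, since at $\gamma=0$ the matrix $J$ remains genuinely tridiagonal (with $\nu=\omega=q$).
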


\begin{proof}
(i) Let $g(t)=[f(t)+1]^{\gamma}\exp(\lambda f(t))$. Then we derive
$$\frac{g'(t)}{g(t)}=\left(\frac{\gamma}{1+f(t)}+\lambda\right)f'(t).$$
For the exponential Riordan array $\left(g(t)),f(t)\right)$, by
Proposition \ref{prop+TP+production}, we derive
$$Z(t)=\frac{g'(\overline{f}(t))}{g(\overline{f}(t))}=\frac{\left(\gamma+\lambda+\lambda t\right)A(t)}{1+t}=\frac{\left(\gamma+\lambda+\lambda t\right)}{(1+t)\overline{f}'(t)}.$$
Let $\varphi(t)=\frac{1}{(1+t)\overline{f}'(t)}$. So for the
exponential Riordan array $\left(g(t)),f(t)\right)$ we get
$$Z(t)=(\gamma+\lambda+\lambda t)\varphi(t),\quad A(t)=(1+t)\varphi(t).$$
It follows from \cite[Proposition 3.3]{Zhu202} that the tridiagonal
matrix
$$\left[
\begin{array}{cccccc}
\gamma+\lambda & 1 &  &  &\\
\lambda & \gamma+\lambda+1 & 2 &\\
 & \lambda & \gamma+\lambda+2&3 &\\
  & &\lambda & \gamma+\lambda+3 &4 &\\
& && \ddots&\ddots & \ddots \\
\end{array}\right]$$
is $(\gamma,\lambda)$-TP. In consequence, by Theorem
\ref{thm+Hakel}, both $[R_{n,k}]_{n,k}$ and
$[T_{i+j}(\lambda)]_{i,j}$ are $(\gamma,\lambda)$-TP$_r$. Obviously,
Furthermore, by Proposition \ref{prop+recp}, we have
$[T^*_{i+j}(\lambda)]_{i,j}$ is $(\gamma,\lambda)$-TP$_r$. Thus, we
show that (i) and (ii) hold.

(iii) By the hypothesis that $1/[\overline{f}'(t)(1+t)]$ is a PF$_r$
function, $1/\overline{f}'(t)$ is a PF$_r$ function. Then taking
$\gamma=0$, we have the Hankel matrix $[f_{i+j+1}]_{i,j\geq0}$ is
TP$_r$ by Theorem \ref{thm+Hankel+Rior} (iii).

 (iv) For the row-generating function $R_n(x)$, we have
\begin{equation*}
\sum_{n\geq0}R_n(x)\frac{t^n}{n!}=[f(t)+1]^{\gamma}\exp((\lambda+x)f(t)).
\end{equation*}
Thus we get $R_n(x)=T_n(\lambda+x)$ for $n\geq0$. In consequence,
(iv) is immediate from (ii).
\end{proof}

Similar to the proof of Theorem \ref{thm+Sheffer+convolution}, by
Theorem \ref{thm+Hakel+gamma}, Lemma \ref{lem+3-q-log-convex}, and
Lemma \ref{lem+conv}, we get Theorem
\ref{thm+Sheffer+convolution+2}.

\section{Applications}

Let $\left\langle
  \begin{array}{ccccc}
    n \\
   k\\
  \end{array}
\right\rangle$ denote the Eulerian number, which counts the number
of $n$-permutations with exactly $k-1$ excedances. It is well-known
that it satisfies the recurrence relation
\begin{equation*}\label{rr-an}
\left\langle
  \begin{array}{ccccc}
    n \\
   k\\
  \end{array}
\right\rangle=k\left\langle
  \begin{array}{ccccc}
    n-1 \\
   k\\
  \end{array}
\right\rangle+(n-k+1)\left\langle
  \begin{array}{ccccc}
    n-1 \\
   k-1\\
  \end{array}
\right\rangle
\end{equation*}
with initial conditions $\left\langle
  \begin{array}{ccccc}
    0 \\
   0\\
  \end{array}
\right\rangle=1$ and $\left\langle
  \begin{array}{ccccc}
    0 \\
   k\\
  \end{array}
\right\rangle=0$ for $k\geq1$ or $k<0$. The triangular array
$\left[\left\langle
  \begin{array}{ccccc}
    n \\
   k\\
  \end{array}
\right\rangle\right]_{n,k\geq0}$ is called the Eulerian triangle.
Brenti proposed the next conjecture, which is still open.
\begin{conj} \emph{\cite[Conjecture 6.10]{Bre96}}
The Eulerian triangle is TP.
\end{conj}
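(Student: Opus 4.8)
The plan is first to explain why the machinery developed above does not apply \emph{directly} to the Eulerian triangle, and then to describe the route I believe is most likely to succeed. Write $a_{n,k}=\genfrac{\langle}{\rangle}{0pt}{}{n}{k}$ for the Eulerian number, so that the triangle is $M=[a_{n,k}]_{n,k\geq0}$, a unit-lower-triangular matrix, and the recurrence reads $a_{n+1,k}=k\,a_{n,k}+(n-k+2)\,a_{n,k-1}$. The obstruction is twofold. First, the Eulerian triangle is \emph{not} an exponential Riordan array: the bivariate exponential generating function of its row polynomials is the classical Frobenius expression $(q-1)/(q-e^{(q-1)t})$, which is not of the Sheffer form $g(t)\exp(qf(t))$ appearing in \eqref{generating funtion+GR}. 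Hence Theorems \ref{thm+ERA+TP} and \ref{thm+Hakel} are simply unavailable.

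Second, and more seriously, the natural production matrix already fails to be nonnegative. Computing $P=M^{-1}\overline{M}$ row by row from the recurrence gives
\begin{equation*}
P=\left[\begin{array}{cccccc}
0&1&0&0&0&\\
0&1&1&0&0&\\
0&0&3&1&0&\\
0&0&-2&7&1&\\
&&&\ddots&\ddots&\ddots
\end{array}\right],
\end{equation*}
so that $P_{3,2}=-2<0$. Consequently Lemma \ref{lem+TP+production} cannot be invoked; since it is only a sufficient condition this does not refute the conjecture, but it does close off the production-matrix route of this paper. The negativity is moreover robust: it cannot be removed by the positive diagonal rescalings of Lemma \ref{lem+factor+TP}, for those that respect the production structure merely conjugate $P$ by a positive diagonal matrix up to a positive scalar, while the rest destroy the existence of a row-independent production matrix altogether. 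I regard this rigidity, which ultimately stems from the row-dependence of the coefficient $(n-k+2)$, as the main obstacle to any proof, and as the reason the conjecture has resisted the Riordan-array approach.

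Accordingly my plan is to abandon the production matrix and instead manufacture a \emph{combinatorial certificate} of nonnegativity for every minor. By the classical equivalence between total positivity, factorization into elementary nonnegative bidiagonal matrices, and planar networks (see \cite{GV85,Pin10} and Brenti's own lattice-path framework \cite{Bre95,Bre96}), it would suffice to exhibit, for each leading principal truncation $M_n$, either (a) a Neville/Whitney factorization $M_n=\prod_i B_i$ into lower- and upper-bidiagonal matrices with nonnegative entries, the multipliers shown nonnegative by induction on $n$ from the Eulerian recurrence; or (b) an acyclic planar network with nonnegative edge weights whose weighted path matrix is $M_n$, so that the Lindstr\"om--Gessel--Viennot lemma identifies each $k\times k$ minor with a sum of weights of families of $k$ vertex-disjoint paths. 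Route (b) is the cleaner target, since a planar model would also yield $q$-analogues and Hankel-type corollaries in the spirit of this paper; route (a) is more mechanical and better suited to a computer-assisted search for the bidiagonal pattern.

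A complementary line is to factor $M$ (or a positive diagonal rescaling of it) into matrices \emph{already} known to be TP, so that Lemma \ref{lem+prod+TP} finishes the argument. The Eulerian numbers are linked to the Stirling numbers of the second kind and to the binomial coefficients through the classical Frobenius and Worpitzky identities, in which the triangle $[\,k!\,S(n,k)\,]$ (itself an instance of Theorem \ref{thm+ERA+TP}) and the Pascal triangle \cite[p.137]{Kar68}---both totally positive---appear. The difficulty is exactly that the known relations mix the two indices and, in their inverse form, carry alternating signs, so a \emph{sign-free} rearrangement into a product of TP factors must be found. In all of these approaches I expect the same row-dependent coefficient $(n-k+2)$ that destroyed the production matrix to resurface: it must be absorbed into the geometry of the network, or into the bidiagonal multipliers, rather than being read off a single column-independent matrix. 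As a concrete first step I would verify TP$_r$ of the leading blocks for small $r$ by direct minor computation, both to corroborate the conjecture and to reverse-engineer a planar network; a uniform certificate proving even TP$_3$ would already be new.
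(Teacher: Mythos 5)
The statement you were asked to prove is not proved in the paper at all: it is Brenti's Conjecture 6.10, quoted verbatim, and the paper explicitly remarks that it ``is still open.'' The paper's contribution is to sidestep it --- the exponential Riordan array machinery (Theorem \ref{thm+ERA+TP}, Theorem \ref{thm+Hakel}, Lemma \ref{lem+TP+production}) is applied to Bessel, Lah, idempotent, rook/Laguerre and binomial-type triangles precisely because the Eulerian triangle falls outside its scope. So there is no proof in the paper against which to compare your attempt.

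Your diagnosis of why the paper's tools fail is accurate: the bivariate exponential generating function $(q-1)/(q-e^{(q-1)t})$ is not of the Sheffer form $g(t)\exp(qf(t))$ of (\ref{generating funtion+GR}), so the Eulerian triangle is not an exponential Riordan array; and your production-matrix computation checks out --- with the paper's convention the rows of $M$ are $(1)$, $(0,1)$, $(0,1,1)$, $(0,1,4,1)$, $(0,1,11,11,1),\ldots$, and solving $\overline{M}=MP$ row by row does give $P_{3,2}=-2$, so Lemma \ref{lem+TP+production} cannot be invoked, and positive diagonal rescalings as in Lemma \ref{lem+factor+TP} cannot repair a strictly negative entry since conjugation by positive diagonals preserves the signs of entries. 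However, none of this, nor the two routes you sketch (a Neville-type bidiagonal factorization, or a planar network certified by the Lindstr\"om--Gessel--Viennot lemma), amounts to a proof: routes (a) and (b) are stated as targets with no construction supplied, and the proposed verification of TP$_r$ for small $r$ on leading blocks would corroborate but not establish the conjecture. In short, what you have written is a sound and well-informed research plan, not a proof --- which is the only honest outcome here, since the statement remains, as the paper says, an open conjecture.
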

Generally, let $[E_{n,k}]_{n,k\geq0}$ be an array satisfying the
recurrence relation:
\begin{equation}\label{recurrence+two term+triangle}
E_{n,k}=[a_0n+a_1k+a_2]E_{n-1,k}+[b_0n+b_1k+b_2]E_{n-1,k-1}
\end{equation}
with $E_{n,k}=0$ unless $0\le k\le n$ and $E_{0,0}=1$. As we know
that many classical enumerations satisfy the recurrence relation
(\ref{recurrence+two term+triangle}). In addition, some special
cases of $[E_{n,k}]_{n,k\geq0}$ are totally positive, for example,
for $a_0=b_0=0$ or $a_1=b_1=0$, see Brenti \cite{Bre95}. In this
section, we will present more results for total positivity of
triangular arrays.

\subsection{A generalization of Bessel numbers of the second kind}

For $\{n,k\}\subseteq \mathbb{N}$, the {\it Bessel number of the
second kind} $B_{n,k}$ is defined to be the number of partitions of
$[n]:=\{1,2,3,\ldots,n\}$ into $k$ nonempty blocks of size at most
$2$. It satisfies the recurrence relation
\begin{eqnarray}
B_{n,k}&=&B_{n-1,k-1}+(n-1)B_{n-2,k-1}
\end{eqnarray}
with the initial condition $B_{0,k}=\delta_{0,k}$. We can get some
properties of $B_{n,k}$ from the following more generalized
recurrence relation for $a=1$, $b=\frac{1}{2}$ and $c=0$.

\begin{prop}
Let $\{a,b,c\}\subseteq \mathbb{R}^{\geq0}$. Assume that a
triangular array $[\mathcal {B}_{n,k}]_{n,k\geq0}$ satisfies the
following recurrence relation
\begin{eqnarray}\label{rec+B}
\mathcal {B}_{n,k}&=&a\mathcal {B}_{n-1,k-1}+2b(n-1)\mathcal
{B}_{n-2,k-1}+3c(n-1)(n-2)\mathcal {B}_{n-3,k-1}
\end{eqnarray}
for $n,k\geq1$, where $\mathcal {B}_{1,1}=1$ and $\mathcal
{B}_{0,k}=\delta_{0,k}$. Then we have the following results.
\begin{itemize}
 \item [\rm (i)]
 An explicit formula of $\mathcal {B}_{n,k}$ can be written as
\begin{eqnarray}
\mathcal
{B}_{n,k}&=&\frac{n!}{k!}\sum_{i=0}^ka^{k-i}c^{n-k-i}b^{2i-n+k}\binom{k}{i}\binom{i}{n-k-i}
\end{eqnarray}
for $n,k\geq1$.
\item [\rm (ii)]
 The lower-triangular matrix $[\mathcal
{B}_{n,k}]_{n,k\geq0}$ is TP$_r$ for $b^2\geq 4ac \cos^2
\frac{\pi}{r+1}$.
\item [\rm (iii)]
 The lower-triangular matrix $[\mathcal
{B}_{n,k}]_{n,k\geq0}$ is TP for $b^2\geq 4ac$.
 \end{itemize}
\end{prop}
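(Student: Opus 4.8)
The plan is to identify $[\mathcal{B}_{n,k}]_{n,k\geq0}$ with an exponential Riordan array and then apply the criteria of Section~3. First I would form $B(t,q)=\sum_{n,k\geq0}\mathcal{B}_{n,k}\,q^{k}\frac{t^{n}}{n!}$; replacing $n$ by $n+1$ in (\ref{rec+B}), multiplying by $q^{k}t^{n}/n!$ and summing, the three terms on the right collapse (after the standard index shifts) to $aqB$, $2bqtB$ and $3cqt^{2}B$, so that $B$ solves
\begin{equation*}
\frac{\partial B}{\partial t}=q\,(a+2bt+3ct^{2})\,B,\qquad B(0,q)=1 .
\end{equation*}
Integrating gives $B(t,q)=\exp\!\left(q\,(at+bt^{2}+ct^{3})\right)$, and comparison with (\ref{generating funtion+GR}) identifies $[\mathcal{B}_{n,k}]_{n,k\geq0}=(1,f(t))$ with $f(t)=at+bt^{2}+ct^{3}$. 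Part~(i) is then immediate from $\mathcal{B}_{n,k}=\frac{n!}{k!}[t^{n}]f(t)^{k}$: writing $f(t)^{k}=t^{k}(a+bt+ct^{2})^{k}$ and expanding by the multinomial theorem, the monomials contributing to $t^{n}$ carry exponents $k-i,\,2i-n+k,\,n-k-i$ on $a,b,c$, and $\binom{k}{\,k-i,\,2i-n+k,\,n-k-i\,}=\binom{k}{i}\binom{i}{n-k-i}$ gives exactly the stated formula.

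For (ii) and (iii) I would invoke Theorem~\ref{thm+ERA+TP} with $g(t)=1$ (which is trivially PF) and $\lambda=0$, reducing the problem to showing that $f(t)$ is a PF$_{r}$ function, i.e.\ that the Toeplitz matrix $\Gamma(f)$ is TP$_{r}$. Factoring $f(t)=t\,p(t)$ with $p(t)=a+bt+ct^{2}$ gives $\Gamma(f)=\Gamma(t)\Gamma(p)$, and since the shift matrix $\Gamma(t)$ is (elementarily) TP, Lemma~\ref{lem+prod+TP} reduces everything to the total positivity of order $r$ of the banded lower-triangular Toeplitz matrix $\Gamma(p)$ of the trinomial $(a,b,c)$. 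Part~(iii) then follows at once: when $b^{2}\geq4ac$ the quadratic $p(t)$ has only real nonpositive zeros, so $(a,b,c)$ is a P\'olya frequency sequence, $\Gamma(p)$ is TP, and hence $(1,f)$ is TP.

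The step I expect to be the main obstacle is the sharp order-$r$ statement~(ii). Here I would first use the diagonal scaling of Lemma~\ref{lem+factor+TP} (conjugating $\Gamma(p)$ by $\mathrm{diag}(s^{n})$ with $s=\sqrt{a/c}$, assuming $a,c>0$) to normalize the symbol to $1+\beta t+t^{2}$ with $\beta=b/\sqrt{ac}$, so the target becomes: $\Gamma(1+\beta t+t^{2})$ is TP$_{r}$ whenever $\beta\geq2\cos\frac{\pi}{r+1}$. The densely packed contiguous minors of this matrix are tridiagonal determinants $D_{m}$ with $\beta$ on the diagonal and $1$ on both off-diagonals, satisfying $D_{m}=\beta D_{m-1}-D_{m-2}$, $D_{0}=1$, $D_{1}=\beta$; thus $D_{m}=U_{m}(\beta/2)$ is a Chebyshev polynomial of the second kind, and with $\beta=2\cos\theta$ one has $D_{m}=\sin((m+1)\theta)/\sin\theta\geq0$ for all $m\leq r$ exactly when $\theta\leq\pi/(r+1)$, i.e.\ $\beta\geq2\cos\frac{\pi}{r+1}$, equivalently $b^{2}\geq4ac\cos^{2}\frac{\pi}{r+1}$. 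The genuine technical content is to show that nonnegativity of these contiguous Chebyshev minors forces nonnegativity of \emph{every} minor of order $\leq r$ of $\Gamma(p)$; I would establish this via the Lindstr\"om--Gessel--Viennot description of the minors of a banded Toeplitz matrix, under which an arbitrary minor factors as a product over its densely packed blocks, each of which is one of the $D_{m}$. (Part~(iii) is also recovered as the limit $r\to\infty$, where $\cos\frac{\pi}{r+1}\to1$.)
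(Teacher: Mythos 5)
Your proposal matches the paper's proof in its architecture. Your ODE derivation of $B(t,q)=\exp\bigl(q(at+bt^{2}+ct^{3})\bigr)$ is just a repackaging of the paper's computation of the column EGFs $f_k(t)=(at+bt^{2}+ct^{3})^{k}/k!$, and your multinomial proof of (i) is the paper's. For (ii) and (iii), both you and the paper reduce, via the total-positivity criterion for exponential Riordan arrays (you use Theorem \ref{thm+ERA+TP} directly; the paper invokes Theorem \ref{thm+Sheffer+triangle}, which is proved from it), to showing that $(a,b,c)$ is a PF$_r$ sequence when $b^{2}\ge 4ac\cos^{2}\frac{\pi}{r+1}$. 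At exactly this point the paper stops: it cites Katkova--Vishnyakova \cite{KV06} for that statement. You instead undertake to prove it, so the correctness of your proposal hinges on that self-contained argument. Your treatment of (iii) is complete: $b^{2}\ge 4ac$ gives real nonpositive zeros of $a+bt+ct^{2}$, hence a PF sequence by the representation theorem for finite PF sequences.

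For (ii), your normalization to $1+\beta t+t^{2}$ and the identification of the solid (contiguous, offset-one) minors with Chebyshev determinants $D_m=U_m(\beta/2)$, nonnegative for all $m\le r$ if and only if $\beta\ge 2\cos\frac{\pi}{r+1}$, are correct. The genuine gap is the final reduction of arbitrary minors of order $\le r$ to these solid ones, for which LGV is the wrong tool: a planar-network (equivalently bidiagonal-factorization) representation $\Gamma(1+\beta t+t^{2})=\Gamma(1+\alpha_{1}t)\Gamma(1+\alpha_{2}t)$ has $\alpha_{1},\alpha_{2}$ non-real precisely when $\beta<2$, i.e.\ precisely in the regime $4ac\cos^{2}\frac{\pi}{r+1}\le b^{2}<4ac$ that (ii) adds beyond (iii), so LGV yields only a complex-weighted signed sum over path families --- neither a product formula nor nonnegativity. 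The factorization you want is nevertheless true and has an elementary proof from the band structure: a nonzero minor with rows $i_{1}<\cdots<i_{k}$ and columns $j_{1}<\cdots<j_{k}$ forces $0\le i_{s}-j_{s}\le 2$ for all $s$; if for some $m$ either $j_{m+1}>i_{m}$ or $i_{m+1}>j_{m}+2$, then one of the two off-diagonal blocks of the submatrix vanishes identically and the determinant factors into the two diagonal blocks; and if no such splitting is possible, the row and column sets are forced to be consecutive, so the minor is solid of offset $0$, $1$, or $2$, hence equal to $1$, $D_{m}$, or $1$ after your normalization. Replacing the LGV appeal by this block-triangularity argument (and disposing of the trivial case $ac=0$ separately) closes the gap; alternatively, you could do what the paper does and simply quote \cite{KV06}.
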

\begin{proof}
Let $f_k(t)=\sum_{n\geq k}\mathcal{B}_{n,k}\frac{t^n}{n!}$. After we
multiply $\frac{t^{n-1}}{(n-1)!}$ for both sides of (\ref{rec+B})
and sum on $n$, we get
$$f_k'(t)=(a+2bt+3ct^2)f_{k-1}(t)$$ with $f_0(t)=1$ and $f_k(0)=0$ for
$k\geq1$. This implies that $$f_k(t)=\frac{(at+bt^2+ct^3)^k}{k!}$$
for $k\geq1$. Thus the triangular array $[\mathcal
{B}_{n,k}]_{n,k\geq0}$ is the exponential Riordan array
$$(1,at+bt^2+ct^3).$$ Then we immediately get
\begin{eqnarray*}
\mathcal {B}_{n,k}&=&\frac{n!}{k!}[t^n]f^k(t)\\
&=&\frac{n!}{k!}[t^{n-k}](a+bt+ct^2)^k\\
&=&\frac{n!}{k!}[t^{n-k}]\sum_{i=0}^k\binom{k}{i}(bt+ct^2)^ia^{k-i}\\
&=&\frac{n!}{k!}\sum_{i=0}^k\binom{k}{i}[t^{n-k-i}](b+ct)^ia^{k-i}\\
&=&\frac{n!}{k!}\sum_{i=0}^ka^{k-i}c^{n-k-i}b^{2i-n+k}\binom{k}{i}\binom{i}{n-k-i},
\end{eqnarray*}
which is (i).

 By Theorem \ref{thm+Sheffer+triangle}, for (ii) and
(iii), it suffices to prove that $a,b,c$ forms a PF$_r$ sequence for
$b^2\geq 4ac \cos \frac{\pi}{r+1}$. This is true because its
Toeplitz matrix is TP$_r$ for $b^2\geq 4ac \cos^2 \frac{\pi}{r+1}$,
see \cite{KV06}.
\end{proof}

\begin{rem}
A combinatorial interpretation of $\mathcal {B}_{n,k}$ can be given
as follows: in any ordered partition of $[n]$, if a block only
contains $n$ then we give it a weight $a$; if a block of size $2$
contains $n$ then we give it a weight $b$; if a block of size $3$
contains $n$ then we give it a weight $c$; for other block with
weight $1$. Then
$$\mathcal
{B}_{n,k}=\sum_{\{\mathscr{B}_1,\mathscr{B}_2,\mathscr{B}_3,\ldots\},1\leq|\mathscr{B}_i|\leq3}\prod_i
w(\mathscr{B}_i),$$ where $\{\mathscr{B}_1,\mathscr{B}_2,\ldots
\mathscr{B}_k\}$ is a partition of $[n]$ into $k$ nonempty ordered
blocks of size at most $3$.
\end{rem}

\subsection{A generalization of Bessel numbers of the first kind}

The famous Bessel polynomial introduced by Krall and Frink
\cite{KF45} has the following formula
$$y_n(q)=\sum_{k=0}^n\frac{(n+k)!}{(n-k)!k!}\left(\frac{q}{2}\right)^k,$$
which satisfies a second-order differential equation
$$q^2y''_n+(2q+2)y'_n-n(n+1)y_n=0$$
and the recurrence relation
$$y_{n+1}(q)=(2n+1)qy_n(q)+y_{n-1}(q)$$
with initial conditions $y_0(q)=1$ and $y_1(q)=1+q$. It is
well-known that Bessel polynomials form an orthogonal sequence of
polynomials. See \cite{Car57,Gro51} for more properties of Bessel
polynomials. If we write
$$y_{n-1}=\sum_{k\geq0}b_{n,k}q^{n-k},$$ then $b_{n,k}$ is called
{\it the signless Bessel number} of the first kind. Denote the
reverse Bessel polynomial by $Y_n(q)=\sum_{k\geq0}b_{n,k}q^k$.

The reverse Bessel polynomial and the signless Bessel number satisfy
$$\exp(q(1-\sqrt{1-2t}))=\sum_{n\geq0} Y_n(q) \frac{t^n}{n!}=\sum_{n\geq0}
\sum_{k=0}^nb_{n,k}q^k \frac{t^n}{n}.$$ Note that $[b_{n,k}]_{n,k}$
satisfies the following recurrence relation:
\begin{eqnarray}\label{rec+Bes}
b_{n,k}=(2n-k-2)b_{n-1,k}+b_{n-1,k-1}
\end{eqnarray}
with $b_{0,0}=1$. Generalizing the recurrence relation
(\ref{rec+Bes}), we define a {\it generalized Bessel triangle}
$[\mathcal {T}_{n,k}]_{n,k\geq0}$ and get some general results as
follows, which in particular implies properties of the signless
Bessel triangle $\left[b_{n,k}\right]_{n,k\geq0}$ and the reverse
Bessel polynomial $Y_n(q)$ for $a=2$, $b=c=1$ and $\lambda=d=0$.

\begin{prop}\label{prop+two+term+triangle-}
Let $a\in \mathbb{N}^{+}$, $ad\in \mathbb{N}$ and $c\in
\mathbb{R}^{+}$. Assume that a generalized Bessel triangle
$[\mathcal {T}_{n,k}]_{n,k\geq0}$ satisfies the following recurrence
relation
\begin{eqnarray}\label{rec+four}
\mathcal {T}_{n,k}&=&c \mathcal
{T}_{n-1,k-1}+\left[ab(n-1)-bk+abd+c\lambda\right]\mathcal
{T}_{n-1,k}-b\lambda (k+1)\mathcal {T}_{n-1,k+1}
\end{eqnarray}
for $n,k\geq1$, where $\mathcal {T}_{0,0}=1$. Let $\mathcal
{T}_n(q)=\sum_{k\geq0}\mathcal {T}_{n,k}q^k$. Then we have the
following results.
\begin{itemize}
 \item [\rm (i)]
 An explicit formula of $\mathcal {T}_{n,k}$ can be written as
\begin{eqnarray}
\mathcal
{T}_{n,k}&=&\frac{a^n}{k!}\sum_{j\geq0}\sum_{i=0}^{k+j}\frac{(-1)^{n-i}b^{n-k-j}c^{k+j}\lambda^j}{j!}\binom{k+j}{i}\left(\frac{i-ad}{a}\right)_n
\end{eqnarray}
for $n,k\geq1$.
 \item [\rm (ii)]
 The lower-triangular matrix $[\mathcal {T}_{n,k}]_{n,k\geq0}$ is $(b,\lambda)$-TP.
 \item [\rm (iii)]
For $0\leq ad\leq a-1$, $(\mathcal {T}_{n}(q))_{n\geq0}$ is
$(b,\lambda,q)$-SM and $3$-$(b,\lambda,q)$-log-convex. In
particular, $(\mathcal {T}_{n,0})_{n\geq0}$ is $(b,\lambda)$-SM and
$3$-$(b,\lambda)$-log-convex.
\item [\rm (iv)]
The sequence $(f_{n})_{n\geq1}$ is SM, where
$\sum_{n\geq1}f_n\frac{t^n}{n!}=\frac{c}{b}\left[1-(1-abt)^{\frac{1}{a}}\right]$
and $b>0$.
\item [\rm (v)]
If $0\leq ad\leq a-1$ and $\{b,\lambda\}\subseteq
\mathbb{R}^{\geq0}$, then $z_n=\sum_{k\geq0}\mathcal
{T}_{n,k}x_ky_{n-k}$ preserves Stieltjes moment property of
sequences.
 \end{itemize}
\end{prop}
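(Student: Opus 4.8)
The plan is to realise $[\mathcal{T}_{n,k}]_{n,k\ge0}$ as an exponential Riordan array and then feed it into the machinery of Section~3. Setting $\mathcal{F}(t,q)=\sum_{n,k}\mathcal{T}_{n,k}q^k t^n/n!$, I would multiply the recurrence \eqref{rec+four} by $q^k t^{n-1}/(n-1)!$ and sum over $n,k$ to obtain the first-order linear PDE
\[
(1-abt)\,\partial_t\mathcal{F}=(cq+abd+c\lambda)\mathcal{F}-b(q+\lambda)\,\partial_q\mathcal{F}.
\]
Writing $\mathcal{F}=g(t)e^{qf(t)}$ and equating the coefficient of $q$ with the $q$-free part splits this into $(1-abt)f'=c-bf$ with $f(0)=0$, and $(1-abt)g'/g=abd+c\lambda-b\lambda f$ with $g(0)=1$, whose solutions are
\[
f(t)=\frac cb\bigl[1-(1-abt)^{1/a}\bigr],\qquad g(t)=(1-abt)^{-d}e^{\lambda f(t)} .
\]
Thus $[\mathcal{T}_{n,k}]=(g,f)$ and $\mathcal{T}_{n,k}=\tfrac{n!}{k!}[t^n]gf^k$; expanding $e^{\lambda f}$, applying the binomial theorem to $f^{k+j}=(c/b)^{k+j}[1-(1-abt)^{1/a}]^{k+j}$, and reading off $[t^n](1-abt)^{(i-ad)/a}=\binom{(i-ad)/a}{n}(-ab)^n$ yields the explicit formula (i) after collecting signs. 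Part (iv) then drops out, since the generating function displayed there is exactly $f(t)$ and $1/\bar f'(t)=c(1-\tfrac bc t)^{1-a}$ is a product of $a-1$ geometric (hence PF) factors, so it is PF and Theorem \ref{thm+Hankel+Rior}(iii) makes $[f_{i+j+1}]$ TP.

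For (ii) I would avoid applying Theorem \ref{thm+Hakel} to $(g,f)$ directly, because the natural production-matrix decomposition forces the tridiagonal factor to carry the negative entries $-b$ and $-b\lambda$. Instead I use the group law \eqref{Riordan+product}: a short computation with $\bar f$, using $1-ab\bar f(t)=(1-\tfrac bc t)^a$ and $1-\tfrac bc f(t)=(1-abt)^{1/a}$, gives the factorisation
\[
(g,f)=(e^{\lambda f},f)*\bigl((1-\tfrac bc t)^{-ad},\,t\bigr).
\]
The first factor is $(b,\lambda)$-TP by Theorem \ref{thm+Hankel+Rior}(ii), since $1/\bar f'$ is $b$-PF; the second is the Toeplitz array of $(1-\tfrac bc t)^{-ad}$, which is $b$-PF precisely because $ad\in\mathbb{N}$, hence $b$-TP by Lemma \ref{lem+factor+TP}. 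Lemma \ref{lem+prod+TP} then yields (ii). It is essential that the group law extracts the integer power $(1-\tfrac bc t)^{-ad}$ rather than the non-integer power $(1-abt)^{-d}$, the latter being in general not PF.

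The hard part is (iii), where the factor $(1-abt)^{-d}$ again obstructs a direct tridiagonal argument. The key observation I would exploit is that the Hankel matrix of the $0$th column of an exponential Riordan array $(g,\cdot)$ depends only on $g$, so the inner function may be replaced at will. Taking $\tilde f$ with $\overline{\tilde f}(t)=\frac{1-(1+t)^{-a}}{ab}$, so that $1-ab\,\overline{\tilde f}(t)=(1+t)^{-a}$, Proposition \ref{prop+TP+production} gives for the array $(g,\tilde f)$
\[
Z(t)=abd(1+t)^{a}+\lambda c(1+t)^{a-1},\qquad A(t)=b(1+t)^{a+1},
\]
both with nonnegative coefficients; with the common PF factor $\varphi(t)=(1+t)^{a-1}$ the associated tridiagonal matrix $J$ has diagonal $abd+\lambda c+2kb$, superdiagonal $(k+1)b$ and subdiagonal $b(ad+k)$. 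The sufficient conditions of Theorem \ref{thm+Hakel}(ii) (in its notation $\nu\ge\omega$ and $b_0\ge a_0+c_0$, here $\lambda c\ge0$ and $2b=b+b$) hold, so by the criteria of \cite{CLW15,Zhu202} the matrix $J$ is coefficientwise TP, and Theorem \ref{thm+Hakel}(i) shows $[\mathcal{T}_{i+j,0}]$ is $(b,\lambda)$-TP. Running the same computation with $\lambda+q$ in place of $\lambda$ (the row generating function $\mathcal{T}_n(q)$ has exponential generating function $(1-abt)^{-d}e^{(\lambda+q)f}$) upgrades this to $(b,\lambda,q)$-TP of $[\mathcal{T}_{i+j}(q)]$, i.e.\ the $(b,\lambda,q)$-SM property, and $3$-$(b,\lambda,q)$-log-convexity follows from Lemma \ref{lem+3-q-log-convex}.

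Finally (v) is immediate from (iii): for fixed $q,b,\lambda\ge0$ the sequence $(\mathcal{T}_n(q))_n$ is then a genuine Stieltjes moment sequence, so Lemma \ref{lem+conv} shows the convolution $z_n=\sum_k\mathcal{T}_{n,k}x_ky_{n-k}$ preserves the SM property. The steps I expect to require the most care are the sign cancellations that turn the a~priori signed coefficients of $Z$ and $A$ into genuinely nonnegative ones (equivalently, that make $J$ totally positive), and verifying that $\tilde f$ is an admissible inner function, $\tilde f(0)=0$ and $\tilde f'(0)=b\ne0$, so that $(g,\tilde f)$ is a bona fide exponential Riordan array to which Theorem \ref{thm+Hakel} applies.
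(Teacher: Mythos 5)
Your proposal is correct, and on the two substantive parts (ii) and (iii) it takes a genuinely different route from the paper. For (ii) the paper argues in two pieces: it applies Theorem \ref{thm+ERA+TP} to $(g(t)e^{\lambda f},f)$ with $g(t)=(1-abt)^{-d}$, which only works when $d\in\mathbb{N}$ (otherwise $g$ is not PF), and it recovers the remaining cases from the production-matrix computation it sets up for (iii). You instead push the obstruction through the group law, writing $(ge^{\lambda f},f)=(e^{\lambda f},f)*\bigl((1-\tfrac bc t)^{-ad},t\bigr)$, so the fractional power $(1-abt)^{-d}$ reappears as the integer power $(1-\tfrac bc t)^{-ad}$, and conclude by Cauchy--Binet; this one factorization covers all $ad\in\mathbb{N}$ uniformly, and in fact reaches cases (e.g.\ $a=2$, $ad=3$) that the paper's two arguments, read literally, do not obviously cover. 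For (iii) the paper keeps the inner function $f$, so $Z$ and $A$ involve $(1-\tfrac bc t)^{-ad}$, and it restores positivity by sandwiching a bidiagonal matrix between two Toeplitz factors, $P=\Lambda\Gamma(\varphi)B\Gamma(\phi)\Lambda^{-1}$ with $\varphi=(1-\tfrac bc t)^{-(a-p-1)}$, $\phi=c(1-\tfrac bc t)^{-p}$, $p=ad$ --- exactly where the hypothesis $0\le ad\le a-1$ enters. Your swap of $f$ for $\tilde f$ (using that the $0$th column depends only on the first component) is valid, your $Z$, $A$ and the tridiagonal $J$ (diagonal $abd+\lambda c+2kb$, superdiagonal $(k+1)b$, subdiagonal $b(ad+k)$) are computed correctly, and coefficientwise diagonal dominance with slack $\lambda c$ does give $(b,\lambda)$-TP of $J$; your argument even works for all real $ad\ge 0$. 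The only care needed, which you flag, is that Theorem \ref{thm+Hakel}(ii) and Theorem \ref{thm+Hankel+Rior}(ii) are stated numerically, so the coefficientwise conclusions must be routed through Theorem \ref{thm+Hakel}(i) together with the coefficientwise tridiagonal criteria of \cite{CLW15,Zhu202}; this is legitimate, and the paper makes the same implicit extensions (e.g.\ its own claim that $1/\bar{f}'(t)=c(1-\tfrac bc t)^{1-a}$ yields $(b,\lambda)$-TP).
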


\begin{proof}

Let $f_k(t)=\sum_{n\geq k}\mathcal {T}_{n,k}\frac{t^n}{n!}$. It
follows from the recurrence relation (\ref{rec+four}) that
$$f'_k(t)=abtf'_k(t)+[b(ad-k)+c\lambda] f_k(t)+c f_{k-1}(t)-b\lambda
(k+1)f_{k+1}$$ for $k\geq1$. This implies
\begin{eqnarray}\label{eq+fkk}
b\lambda
(k+1)f_{k+1}+(1-abt)f'_k(t)-cf_{k-1}(t)+[b(k-ad)-c\lambda]f_k(t)=0
\end{eqnarray}
for $k\geq1$. It is not hard to check that
$$f_k(t)=\frac{1}{k!}(1-abt)^{-d}e^{\frac{c\lambda}{b}\left[1-(1-abt)^{\frac{1}{a}}\right]}\left(\frac{c}{b}\right)^k\left[1-(1-abt)^{\frac{1}{a}}\right]^k$$
is the solution of equation (\ref{eq+fkk}). Let
$$g(t)=(1-abt)^{-d},\quad
f(t)=\frac{c}{b}\left[1-(1-abt)^{\frac{1}{a}}\right].$$ Then we have
$[\mathcal {T}_{n,k}]_{n,k}$ is the exponential Riordan array
$(g(t)e^{\lambda f(t)},f(t))$. So by taking the coefficients of
$t^n$ in $g(t)e^{\lambda f(t)}f(t)^k$, we immediately get
\begin{eqnarray*}
\mathcal
{T}_{n,k}&=&\frac{n!}{k!}[t^n]g(t)e^{\lambda f(t)}f(t)^k\\
&=&\frac{n!}{k!}[t^n]g(t)\sum_{j\geq0}\frac{{\lambda}^jf^{j+k}(t)}{j!}\\
&=&\left(\frac{c}{b}\right)^k\frac{n!}{k!}[t^n](1-abt)^{-d}\sum_{j\geq0}\frac{{\left(\frac{c\lambda}{b}\right)}^j}{j!}\sum_{i=0}^{j+k}(-1)^i\binom{j+k}{i}(1-abt)^{\frac{i}{a}}\\
&=&\left(\frac{c}{b}\right)^k\frac{n!}{k!}[t^n]\sum_{j\geq0}\sum_{i=0}^{k+j}(-1)^i\frac{{\left(\frac{c\lambda}{b}\right)}^j}{j!}\binom{k+j}{i}(1-abt)^{\frac{i-ad}{a}}\\
&=&\left(\frac{c}{b}\right)^k\frac{n!}{k!}\sum_{j\geq0}\sum_{i=0}^{k+j}(-1)^i\frac{{\left(\frac{c\lambda}{b}\right)}^j}{j!}\binom{k+j}{i}(-ab)^n\frac{\left(\frac{i-ad}{a}\right)_n}{n!}\\
&=&\frac{a^n}{k!}\sum_{j\geq0}\sum_{i=0}^{k+j}\frac{(-1)^{n-i}b^{n-k-j}c^{k+j}\lambda^j}{j!}\binom{k+j}{i}\left(\frac{i-ad}{a}\right)_n,
\end{eqnarray*}
which is (i).

 Note that the compositional
inverse of $f(t)$ is
$$\bar{f}(t)=\frac{1-\left(1-\frac{b}{c}t\right)^{a}}{ab}.$$ Clearly, both
$g(t)$ and $1/\bar{f}'(t)=\frac{c}{(1-\frac{b}{c}t)^{a-1}}$ are
P\'olya frequency functions for $a\in \mathbb{N}^{+}$, $d\in
\mathbb{N}$ and $c\in \mathbb{R}^{+}$. Then (ii) follows from
Theorem \ref{thm+ERA+TP}.

For $d=0$, by Theorem \ref{thm+Hankel+Rior} (i) and (iii), we obtain
that $(\mathcal {T}_{n}(q))_{n\geq0}$ is $(\lambda,q)$-SM and
$(f_{n})_{n\geq1}$ is SM. Furthermore, $(\mathcal
{T}_{n}(q))_{n\geq0}$ is $3$-$(\lambda,q)$-log-convex and
$z_n=\sum_{k\geq0}\mathcal {T}_{n,k}x_ky_{n-k}$ preserves Stieltjes
moment property of sequences by Lemmas \ref{lem+3-q-log-convex} and
\ref{lem+conv}, respectively.

For the general case $d\neq0$, the corresponding total positivity in
(iii) can be proved from the following production matrix. In
addition, for the exponential Riordan array $(g(t)e^{\lambda
f(t)},f(t))$, by (\ref{generating funtion+GR}), we have
\begin{eqnarray}\label{REG-}
\sum_{n\geq0}\mathcal {T}_{n}(q)\frac{t^n}{n!}=g(t)\exp\left(\lambda
f(t)\right)\exp\left(qf(t)\right)=g(t)\exp\left((\lambda+q)f(t)\right).
\end{eqnarray}
Thus, in order to prove that the Hankel matrix $[\mathcal
{T}_{i+j}(q)]_{i,j\geq0}$ is $(b,\lambda,q)$-TP, it suffices to
prove that $[\mathcal {T}_{i+j}(0)]_{i,j\geq0}$ is $(b,\lambda)$-TP,
i.e., $[\mathcal {T}_{i+j,0}]_{i,j\geq0}$ is $(b,\lambda)$-TP.

For $Z(t)$ and $A(z)$ of the exponential Riordan array
$(g(t)e^{\lambda f(t)},f(t))$, we derive
$$Z(t)=\frac{abd}{\left(1-\frac{b}{c}t\right)^{a}}+ \frac{\lambda c}{\left(1-\frac{b}{c}t\right)^{a-1}},
A(t)=\frac{c}{\left(1-\frac{b}{c}t\right)^{a-1}}.$$ We assume
$p=ad\in[0,a-1]$. Then let
$\varphi(t)=\frac{1}{\left(1-\frac{b}{c}t\right)^{a-p-1}}$ and
$\phi(t)=\frac{c}{\left(1-\frac{b}{c}t\right)^{p}}$. We deduce the
production matrix $P$ of the exponential Riordan array
$(g(t)e^{\lambda f(t)},f(t))$ as follows:
\begin{eqnarray}\label{PPPP}
P&=&\Lambda \Gamma(Z)\Lambda^{-1}+\Lambda \Gamma(A)\Theta\Lambda^{-1}\nonumber\\
&=&\Lambda \Gamma(\varphi)\left(\Gamma\left(\frac{abd}{\left(1-\frac{b}{c}t\right)^{p+1}}+ \lambda \phi(t)\right)+ \Gamma(\phi(t))\Theta\right)\Lambda^{-1}\nonumber\\
&=&\Lambda
\Gamma(\varphi)\left(\Gamma\left(\frac{pb}{\left(1-\frac{b}{c}t\right)^{p+1}}+
\lambda \phi(t)\right)+ \Gamma(\phi(t))\left[
  \begin{array}{cccccccc}
    0&1&& \\
    &0&2&\\
   & &0&3&&\\
   & &&0&4&\\
 &&&&\ddots&\ddots \\
  \end{array}
\right]\right)\Lambda^{-1}\nonumber\\
&=&\Lambda
\Gamma(\varphi)\left[c(i+1)\binom{p+i-j}{i-j+1}\left(\frac{b}{c}\right)^{i-j+1}+\lambda
c\binom{p-1+i-j}{i-j}\left(\frac{b}{c}\right)^{i-j}\right]_{i,j}\Lambda^{-1}\nonumber\\
 &=&\Lambda \Gamma(\varphi)\left[
\begin{array}{cccccc}
\lambda c & c &  &  &\\
 & \lambda c & 2c &\\
 &  & \lambda c &3c &\\
  & & & \lambda c &4c &\\
& && &\ddots & \ddots \\
\end{array}\right]\Gamma(\phi)\Lambda^{-1},
\end{eqnarray}
which is $(b,\lambda)$-TP. It follows from Theorem \ref{thm+Hakel}
(i) that we get the $(b,\lambda)$-total-positivity of $[\mathcal
{T}_{n,k}]_{n\geq0}$ in (ii), $(b,\lambda,q)$-Stieltjes moment
property of $(\mathcal {T}_{n}(q))_{n\geq0}$, and
$(b,\lambda)$-Stieltjes moment property of $(\mathcal
{T}_{n,0})_{n\geq0}$ in (iii). Furthermore, the remaining results in
(iii) and (v) are immediate by Lemma \ref{lem+3-q-log-convex} and
Lemma \ref{lem+conv}, respectively.
\end{proof}

Let $[\mathcal {T}_{n,k}]_{n,k}$ be defined by (\ref{rec+four}). For
the reciprocal generalized Bessel polynomial $\mathcal
{T}_n^{*}(q)=q^n\mathcal {T}_n(1/q)$ and the reciprocal generalized
Bessel numbers $\mathcal {T}^*_{n,k}=\mathcal {T}_{n,n-k}$, by
Proposition \ref{prop+two+term+triangle-}, we immediately get the
following reciprocal result.

\begin{prop}
Let $a\in \mathbb{N}^{+}$, $ad\in \mathbb{N}$ and $c\in
\mathbb{R}^{+}$. If $0\leq ad\leq a-1$, then we have
\begin{itemize}
 \item [\rm (i)]
the reciprocal generalized Bessel triangle $[\mathcal
{T}^*_{n,k}]_{n,k\geq0}$ satisfies the following recurrence
\begin{eqnarray*}
\mathcal {T}^*_{n,k}&=&c \mathcal
{T}^*_{n-1,k}+\left[ab(n-1)-b(n-k)+abd+c\lambda\right]\mathcal
{T}^*_{n-1,k-1}-b\lambda (n-k+1)\mathcal {T}^*_{n-1,k-2}
\end{eqnarray*}
for $n,k\geq1$, where $\mathcal {T}^*_{0,0}=1$;
 \item [\rm (ii)]
 the exponential generating function of $\mathcal
{T}_n^{*}(q)$ can be written as
 \begin{eqnarray}
\sum_{n\geq0}\mathcal
{T}^*_{n}(q)\frac{t^n}{n!}=g(qt)\exp\left((q\lambda+1)f(qt)/q\right);
\end{eqnarray}
 \item [\rm (iii)]
$(\mathcal {T}^*_{n}(q))_{n\geq0}$ is $(b,\lambda,q)$-SM and
$3$-$(b,\lambda,q)$-log-convex. In particular, $(\mathcal
{T}^*_{n,n})_{n\geq0}$ is $(b,\lambda)$-SM and
$3$-$(b,\lambda)$-log-convex.
\item [\rm (iv)]
the convolution $z_n=\sum_{k\geq0}\mathcal {T}^*_{n,k}x_ky_{n-k}$
preserves Stieltjes moment property of sequences for
$\{b,\lambda\}\subseteq \mathbb{R}^{\geq0}$.
 \end{itemize}
\end{prop}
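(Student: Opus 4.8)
The plan is to deduce all four assertions from Proposition~\ref{prop+two+term+triangle-} via the reciprocal substitution $q\mapsto 1/q$, $t\mapsto qt$, so that no new total-positivity machinery is required. For (ii) I would start from the generating function (\ref{REG-}), namely $\sum_{n\geq0}\mathcal{T}_n(q)t^n/n!=g(t)\exp((\lambda+q)f(t))$, and simply compute
\[
\sum_{n\geq0}\mathcal{T}^*_n(q)\frac{t^n}{n!}=\sum_{n\geq0}q^n\mathcal{T}_n(1/q)\frac{t^n}{n!}=\sum_{n\geq0}\mathcal{T}_n(1/q)\frac{(qt)^n}{n!},
\]
which is exactly (\ref{REG-}) evaluated at $q\mapsto 1/q$, $t\mapsto qt$. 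This produces $g(qt)\exp((\lambda+1/q)f(qt))$, and since $\lambda+1/q=(q\lambda+1)/q$, this is precisely the claimed formula $g(qt)\exp((q\lambda+1)f(qt)/q)$.

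Next, (i) is a pure bookkeeping step. Because $\mathcal{T}^*_{n,k}=\mathcal{T}_{n,n-k}$, I would replace $k$ by $n-k$ in the defining recurrence (\ref{rec+four}) and translate each entry back into starred notation using $\mathcal{T}_{m,j}=\mathcal{T}^*_{m,m-j}$. The three index shifts $\mathcal{T}_{n-1,n-k-1}=\mathcal{T}^*_{n-1,k}$, $\mathcal{T}_{n-1,n-k}=\mathcal{T}^*_{n-1,k-1}$ and $\mathcal{T}_{n-1,n-k+1}=\mathcal{T}^*_{n-1,k-2}$ then yield the stated three-term recurrence, while the coefficient $ab(n-1)-bk+abd+c\lambda$ turns into $ab(n-1)-b(n-k)+abd+c\lambda$ automatically; the only care needed is to track boundary terms so that $\mathcal{T}^*_{0,0}=1$.

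For (iii) I would invoke Proposition~\ref{prop+two+term+triangle-}(iii), which gives that the Hankel matrix $[\mathcal{T}_{i+j}(q)]_{i,j}$ is $(b,\lambda,q)$-TP when $0\le ad\le a-1$, and then apply Proposition~\ref{prop+recp}. Since the reciprocal $\mathcal{T}^*_n(q)=q^n\mathcal{T}_n(1/q)$ reciprocates only the variable $q$, and $\mathcal{T}_n$ has exact degree $n$ (as $\mathcal{T}_{n,n}=c^n\neq0$), the proof of Proposition~\ref{prop+recp} applies verbatim with $b,\lambda$ regarded as further indeterminates, so $[\mathcal{T}^*_{i+j}(q)]_{i,j}$ is $(b,\lambda,q)$-TP, i.e. $(\mathcal{T}^*_n(q))_{n\geq0}$ is $(b,\lambda,q)$-SM. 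The $3$-$(b,\lambda,q)$-log-convexity is then immediate from Lemma~\ref{lem+3-q-log-convex}, and the ``in particular'' case follows from $\mathcal{T}^*_{n,n}=\mathcal{T}_{n,0}$ together with the corresponding statement in Proposition~\ref{prop+two+term+triangle-}(iii). Finally, (iv) follows from (iii) and Lemma~\ref{lem+conv}: for $b,\lambda\ge0$ and any fixed $q\ge0$ the sequence $(\mathcal{T}^*_n(q))_{n\ge0}$ is Stieltjes moment, and $\mathcal{T}^*_n(q)$ is exactly the $n$th row-generating function of $[\mathcal{T}^*_{n,k}]$, so the convolution preserves the SM property.

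Every computation here is routine; the only genuine point requiring attention is confirming that Proposition~\ref{prop+recp} transfers to the multivariate $(b,\lambda,q)$ setting, i.e. that factoring powers of $q$ out of the rows and columns of a Hankel minor leaves the nonnegativity of the $b,\lambda$-coefficients intact. This is clear, since reciprocation in $q$ does not touch those coefficients, so I expect no serious obstacle beyond careful index tracking in part (i).
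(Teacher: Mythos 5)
Your proposal is correct and is essentially the paper's own (largely implicit) argument: the paper derives this proposition "immediately" from Proposition~\ref{prop+two+term+triangle-}, using exactly the ingredients you spell out — the substitution $q\to 1/q$, $t\to qt$ in (\ref{REG-}) for the generating function, the index swap $k\to n-k$ for the recurrence, Proposition~\ref{prop+recp} (read with $b,\lambda$ as spectator indeterminates, just as the paper itself does in the proof of Theorem~\ref{thm+Hankel+Rior}) for the Hankel total positivity of the reciprocals, and Lemmas~\ref{lem+3-q-log-convex} and~\ref{lem+conv} for the $3$-log-convexity and convolution statements. Your added care about the exact degree $\deg_q\mathcal{T}_n(q)=n$ (via $\mathcal{T}_{n,n}=c^n\neq0$) and about the multivariate transfer of Proposition~\ref{prop+recp} fills in details the paper leaves unstated, but does not change the route.
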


In \cite{Cal09}, Callan proved
$$\sum_{k=0}^nk!\binom{2n-k-1}{k-1}(2n-2k-1)!!=(2n-1)!!,$$
which in fact counts different combinatorial structures, such as
increasing ordered trees of $n$ edges by outdegree $k$ of the root.
Let $$H_{n,k}=k!\binom{2n-k-1}{k-1}(2n-2k-1)!!,$$ which is the sum
of the weights of all vertices labeled $k$ at depth $n$ in the
Catalan tree for $1\leq k \leq n+1$ and $n\geq0$, see
\cite[A102625]{Slo}. In addition, $H_{n,k}$ satisfies the recurrence
relation
\begin{eqnarray}\label{rel+Call}
H_{n,k}&=&(2n-k-2)H_{n-1,k}+kH_{n-1,k-1}
\end{eqnarray}
with initial condition $H_{0,0}=1$. In fact, $H_{n,k}$ is closely
related to the signless Bessel number $b_{n,k}$ by
$H_{n,k}=b_{n,k}k!$. Generalizing this relation (\ref{rel+Call}), we
consider the following triangle $[\widetilde{\mathcal
{T}}_{n,k}]_{n,k\geq0}$ related to the generalized Bessel triangle
in Proposition (\ref{prop+two+term+triangle-})  by
$\widetilde{\mathcal {T}}_{n,k}=\mathcal {T}_{n,k}k!.$ Thus, by
Theorem \ref{thm+Sheffer+convolution} and Proposition
\ref{prop+two+term+triangle-}, we have the next result.

\begin{prop}\label{prop+two+term+triangle-k}
Let $a\in \mathbb{N}^{+}$, $ad\in \mathbb{N}$ and
$c\in\mathbb{R}^{+}$. Assume that a triangular array
$[\widetilde{\mathcal {T}}_{n,k}]_{n,k\geq0}$ satisfies the
following recurrence relation
\begin{eqnarray}
\widetilde{\mathcal {T}}_{n,k}&=&ck \widetilde{\mathcal
{T}}_{n-1,k-1}+\left[ab(n-1)-bk+abd+c\lambda\right]\widetilde{\mathcal
{T}}_{n-1,k}-b\lambda \widetilde{\mathcal {T}}_{n-1,k+1}
\end{eqnarray}
for $n,k\geq1$, where $\widetilde{\mathcal {T}}_{0,0}=1$. Let
$\widetilde{\mathcal {T}}_n(q)=\sum_{k\geq0}\widetilde{\mathcal
{T}}_{n,k}q^k$. Then we have the following results.
\begin{itemize}
 \item [\rm (i)]
 An explicit formula of $\widetilde{\mathcal {T}}_{n,k}$ can be written as
\begin{eqnarray*}
\widetilde{\mathcal
{T}}_{n,k}&=&a^n\sum_{j\geq0}\sum_{i=0}^{k+j}\frac{(-1)^{n-i}b^{n-k-j}c^{k+j}\lambda^j}{j!}\binom{k+j}{i}\left(\frac{i-ad}{a}\right)_n
\end{eqnarray*}
for $n,k\geq1$.
 \item [\rm (ii)]
 The lower-triangular matrix $[\widetilde{\mathcal
{T}}_{n,k}]_{n,k\geq0}$ is $(b,\lambda)$-TP.
 \item [\rm (iii)]
For $d=0$, the exponential generating function is
\begin{eqnarray*}
1+\sum_{n\geq1}\widetilde{\mathcal
{T}}_n(q)\frac{t^n}{n!}=\frac{1}{1-\frac{c}{b}\left[1-(1-abt)^{\frac{1}{a}}\right](q+\lambda)}.
\end{eqnarray*}
 \item [\rm (iv)]
If $0\leq ad\leq a-1$ and $\{b,\lambda\}\subseteq
\mathbb{R}^{\geq0}$, then the convolution
$z_n=\sum_{k\geq0}\widetilde{\mathcal {T}}_{n,k}x_ky_{n-k}$
preserves Stieltjes moment property of sequences.
 \end{itemize}
\end{prop}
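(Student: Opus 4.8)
The plan is to build everything on the single identity $\widetilde{\mathcal{T}}_{n,k}=k!\,\mathcal{T}_{n,k}$, which exhibits $[\widetilde{\mathcal{T}}_{n,k}]_{n,k}$ as the column rescaling of the generalized Bessel triangle of Proposition~\ref{prop+two+term+triangle-}; each assertion is then obtained by pushing the corresponding assertion of that proposition through this rescaling. I would first record (a routine check) that the recurrence in the present statement is equivalent to (\ref{rec+four}): multiplying (\ref{rec+four}) by $k!$ and using $k!\,\mathcal{T}_{n-1,k-1}=k\,\widetilde{\mathcal{T}}_{n-1,k-1}$, $k!\,\mathcal{T}_{n-1,k}=\widetilde{\mathcal{T}}_{n-1,k}$ and $(k+1)k!\,\mathcal{T}_{n-1,k+1}=\widetilde{\mathcal{T}}_{n-1,k+1}$ converts the three terms of (\ref{rec+four}) into the three terms displayed here, and $\widetilde{\mathcal{T}}_{0,0}=\mathcal{T}_{0,0}=1$.

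Granting this, part~(i) is immediate: multiplying the closed form of Proposition~\ref{prop+two+term+triangle-}(i) by $k!$ cancels the prefactor $1/k!$ and produces the stated expression. For part~(ii) I would argue that rescaling the $k$th column of a matrix by the positive constant $k!$ multiplies every minor by a positive constant, hence preserves coefficientwise total positivity; applying this (the coefficientwise analogue of Lemma~\ref{lem+factor+TP}) to the $(b,\lambda)$-TP matrix $[\mathcal{T}_{n,k}]_{n,k}$ from Proposition~\ref{prop+two+term+triangle-}(ii) shows that $[\widetilde{\mathcal{T}}_{n,k}]_{n,k}$ is $(b,\lambda)$-TP.

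Part~(iii) I would obtain by a generating-function computation. Since $[\mathcal{T}_{n,k}]_{n,k}$ is the exponential Riordan array $(g(t)e^{\lambda f(t)},f(t))$ with $g(t)=(1-abt)^{-d}$ and $f(t)=\frac{c}{b}\bigl[1-(1-abt)^{1/a}\bigr]$, we have $k!\,\mathcal{T}_{n,k}=n!\,[t^n]\,g(t)e^{\lambda f(t)}f(t)^k$; summing $\sum_k\widetilde{\mathcal{T}}_{n,k}q^k$ thus replaces each $f^k/k!$ by $f^k$ and collapses the geometric series in $qf(t)$, giving $\sum_{n\ge0}\widetilde{\mathcal{T}}_n(q)\frac{t^n}{n!}=\dfrac{g(t)e^{\lambda f(t)}}{1-qf(t)}$, which at $d=0$ (where $g\equiv1$) reduces to $\dfrac{e^{\lambda f(t)}}{1-qf(t)}$. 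This is the same mechanism by which $\widetilde{L}_{n,k}(f)=k!\,S_{n,k}(f,1)$ produced $1/(1-qf)$ in the proof of Theorem~\ref{thm+Sheffer+triangle}(iv).

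The step that needs a genuine additional idea---and the one I would treat most carefully---is part~(iv). Writing the convolution as $z_n=\sum_k k!\,\mathcal{T}_{n,k}x_ky_{n-k}$, I would absorb the factorial by setting $w_k=k!\,x_k$, so that $z_n=\sum_k \mathcal{T}_{n,k}w_ky_{n-k}$ becomes a $\mathcal{T}$-convolution. Because $(k!)_{k\ge0}$ is a Stieltjes moment sequence (the moments of $e^{-x}\,dx$) and the Hadamard product of two Stieltjes moment sequences is again Stieltjes moment, $(w_k)_{k\ge0}$ is Stieltjes moment whenever $(x_k)_{k\ge0}$ is. Under the hypotheses $0\le ad\le a-1$ and $b,\lambda\ge0$, Proposition~\ref{prop+two+term+triangle-}(v) then yields that this $\mathcal{T}$-convolution preserves the Stieltjes moment property, so $(z_n)_{n\ge0}$ is Stieltjes moment. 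The only delicate point is bookkeeping: one must verify that absorbing $k!$ into $x_k$ leaves the hypotheses of Proposition~\ref{prop+two+term+triangle-}(v) intact and that the Hadamard-product argument---exactly the device used in the proof of Theorem~\ref{thm+Sheffer+convolution}(v)---transfers without change.
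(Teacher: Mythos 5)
Your overall strategy---reduce everything to the column rescaling $\widetilde{\mathcal{T}}_{n,k}=k!\,\mathcal{T}_{n,k}$ and push Proposition~\ref{prop+two+term+triangle-} through it---is exactly the paper's intended argument: the paper gives no proof beyond citing this relation together with Proposition~\ref{prop+two+term+triangle-} and Theorem~\ref{thm+Sheffer+convolution}. Your handling of (i), (ii) and (iv) is correct: the recurrence check, the observation that rescaling the $k$th column by $k!$ multiplies every minor by a positive constant and hence preserves coefficientwise total positivity (the coefficientwise form of Lemma~\ref{lem+factor+TP}), and the absorption of $k!$ into $x_k$ via the Hadamard product with the Stieltjes moment sequence $(k!)_{k\geq0}$ are all sound; the last device is literally the one used in the paper's proof of Theorem~\ref{thm+Sheffer+convolution}(v). (A minor caveat: the recurrence as stated, for $n,k\geq1$, does not determine the column $k=0$, so your direction of the identification tacitly assumes the recurrence also holds at $k=0$; the paper avoids this by taking $\widetilde{\mathcal{T}}_{n,k}=\mathcal{T}_{n,k}k!$ as the definition.)

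The genuine problem is part (iii). Your computation correctly yields
\begin{equation*}
\sum_{n\geq0}\widetilde{\mathcal{T}}_n(q)\frac{t^n}{n!}=\frac{g(t)e^{\lambda f(t)}}{1-qf(t)},
\end{equation*}
hence $e^{\lambda f(t)}/\bigl(1-qf(t)\bigr)$ at $d=0$; but the proposition asserts the right-hand side is $1/\bigl(1-(q+\lambda)f(t)\bigr)$, and you present your formula as though it were the stated one. These two series differ whenever $\lambda\neq0$ (already at $q=0$: $e^{\lambda f}$ versus $1/(1-\lambda f)$), so as written your argument does not prove (iii), and you never reconcile the two. In fact your formula is the correct one and the stated (iii) is inconsistent with the recurrence: take $a=b=c=1$, $d=0$, so $f(t)=t$. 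Both formulas give $\widetilde{\mathcal{T}}_1(q)=q+\lambda$, and the recurrence then forces
\begin{equation*}
\widetilde{\mathcal{T}}_{2,1}=\widetilde{\mathcal{T}}_{1,0}+\lambda\widetilde{\mathcal{T}}_{1,1}=2\lambda,
\end{equation*}
in agreement with $e^{\lambda t}/(1-qt)$, whereas the stated formula gives $\widetilde{\mathcal{T}}_2(q)=2(q+\lambda)^2$, i.e.\ $\widetilde{\mathcal{T}}_{2,1}=4\lambda$. So the claimed identity in (iii) holds only for $\lambda=0$. A complete answer must say this explicitly: either prove the corrected identity $1+\sum_{n\geq1}\widetilde{\mathcal{T}}_n(q)t^n/n!=e^{\lambda f(t)}/\bigl(1-qf(t)\bigr)$ and note that the statement as printed is erroneous for $\lambda\neq0$, or restrict (iii) to $\lambda=0$. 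Silently deriving a formula different from the one claimed leaves the proof of (iii) incomplete.
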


\subsection{A generalization of Lah numbers}

It is well known that the exponential partial Bell polynomial
$\textbf{B}_{n,k}(x_1,x_2,\ldots,x_{n-k+1})$ is defined by the
series expansion:
\begin{equation}\label{Partial Bell+seq}
\sum_{n\geq0}\textbf{B}_n(\textbf{x};q)\frac{t^n}{n!}=\exp\left(q\sum_{j\geq1}x_j\frac{t^j}{j!}\right),
\end{equation}
where \begin{equation}\label{Shef+seq}
\textbf{B}_n(\textbf{x};q)=\sum_{k=1}^n\textbf{B}_{n,k}(x_1,\ldots,x_{n-k+1})q^k\quad
\text{for} \quad n\geq1
\end{equation}
and $\textbf{B}_0(\textbf{x};q)=1$. For $q=1$,
$\textbf{B}_n(\textbf{x};1)$ is called {\it the exponential complete
Bell polynomial}. See Comtet \cite[p.133-134]{Com74}.

Note that the well-known signless Lah number
$$\mathcal {L}_{n,k}=\textbf{B}_{n,k}(1!,2!,3!,\ldots)=\binom{n-1}{k-1}\frac{n!}{k!},$$
which satisfies
$$\exp\left(\frac{qt}{1-t}\right)=\sum_{n\geq0} \mathcal {L}_n(q) \frac{t^n}{n!}=\sum_{n\geq0}
\sum_{k=0}^n\mathcal {L}_{n,k}q^k \frac{t^n}{n},$$ see
\cite[p.133-134]{Com74}. It counts the number of partitions of $[n]$
into $k$ lists, where a list means an ordered subset
\cite[A008297]{Slo}. In addition, the row-generating function
$$\mathcal {L}_n(q)=\sum_{k=0}^n\mathcal {L}_{n,k}q^k$$
is called {\it the Lah polynomial}. Notice that $[\mathcal
{L}_{n,k}]_{n,k}$ satisfies the following recurrence relation:
\begin{eqnarray}\label{rec+Lah}
\mathcal {L}_{n,k}=(n+k-1)\mathcal {L}_{n-1,k}+\mathcal
{L}_{n-1,k-1}
\end{eqnarray}
for $n,k\geq1$ with $\mathcal {L}_{0,0}=1$.
 Generalizing the recurrence relation (\ref{rec+Lah}), we give the
 following result for a generalized Lah triangle $[T_{n,k}]_{n,k\geq0}$, which reduces to those of the signless Lah triangle
$\left[\mathcal {L}_{n,k}\right]_{n,k\geq0}$ for $a=b=c=1$ and
$d=\lambda=0$.

\begin{prop}\label{prop+two+term+triangle}
Let $a\in \mathbb{N}$. Assume that a generalized Lah triangle
$[T_{n,k}]_{n,k\geq0}$ satisfies the following recurrence relation
\begin{eqnarray}\label{rec+four+}
T_{n,k}&=&c\,
T_{n-1,k-1}+\left[ab(n-1)+bk+abd+c\lambda\right]T_{n-1,k}+b\lambda(k+1)
T_{n-1,k+1}
\end{eqnarray}
for $n,k\geq1$, where $T_{0,0}=1$. Let
$T_n(q)=\sum_{k\geq0}T_{n,k}q^k$. Then we have the following
results.
\begin{itemize}
\item [\rm (i)]
 An explicit formula of $T_{n,k}$ can be given by
\begin{eqnarray*}
T_{n,k}&=&\frac{a^n}{k!}\sum_{j\geq0}\sum_{i=0}^{k+j}\frac{(-1)^{n+k+j-i}b^{n-k-j}c^{k+j}\lambda^j}{j!}\binom{k+j}{i}\left(-\frac{i+ad}{a}\right)_n
\end{eqnarray*}
for $n,k\geq1$.
 \item [\rm (ii)]
For $d\in \mathbb{N}$ and $c\in\mathbb{R}^{+}$, the lower-triangular
matrix $[T_{n,k}]_{n\geq0}$ is $(b,\lambda)$-TP.
 \item [\rm (iii)]
For $c=b$, the lower-triangular matrix $[T_{n,k}]_{n\geq0}$ is
$(b,d,\lambda)$-TP.
\item [\rm (iv)]
For $c=b$, the sequence $(T_{n}(q))_{n\geq0}$ is
$(b,d,\lambda,q)$-SM and $3$-$(b,d,\lambda,q)$-log-convex. In
particular, $(T_{n,0})_{n\geq0}$ is $(b,d,\lambda)$-SM and
$3$-$(b,d,\lambda)$-log-convex.
\item [\rm (v)]
For $b>0$, the sequence $(f_{n})_{n\geq1}$ is SM, where
$\sum_{n\geq1}f_n\frac{t^n}{n!}=\frac{(1-abt)^{-\frac{1}{a}}-1}{b}$.
\item [\rm (vi)]
Let $c=b$. For $\{b,d,\lambda\}\subseteq \mathbb{R}^{\geq0}$, the
convolution $z_n=\sum_{k\geq0}T_{n,k}x_ky_{n-k}$ preserves Stieltjes
moment property of sequences.
\item [\rm (vii)]
Let $a+1=m$ and $d=0$. We have the $m$-branched Stieltjes-type
continued fraction expression
\begin{eqnarray*}
   \sum_{n\geq0}T_{n}(q)t^n
   & = &
   \cfrac{1}
         {1 \,-\, \alpha_{m} t
            \prod\limits_{i_1=1}^{m}
                 \cfrac{1}
            {1 \,-\, \alpha_{m+i_1} t
               \prod\limits_{i_2=1}^{m}
               \cfrac{1}
            {1 \,-\, \alpha_{m+i_1+i_2} t
               \prod\limits_{i_3=1}^{m}
               \cfrac{1}{1 - \cdots}
            }
           }
         }
%
\end{eqnarray*}
with coefficients
$(\alpha_{i})_{i\geq{m}}=(c(q+\lambda),\underbrace{b,\ldots,b}_{m},c(q+\lambda),\underbrace{2b,\ldots,2b}_{m},c(q+\lambda),\underbrace{3b,\ldots,3b}_{m},\ldots).$
In particular, for $q=0$, we have the $m$-branched Stieltjes-type
continued fraction expression
\begin{eqnarray*}
   \sum_{n\geq0}T_{n,0}t^n
   & = &
   \cfrac{1}
         {1 \,-\, \alpha_{m} t
            \prod\limits_{i_1=1}^{m}
                 \cfrac{1}
            {1 \,-\, \alpha_{m+i_1} t
               \prod\limits_{i_2=1}^{m}
               \cfrac{1}
            {1 \,-\, \alpha_{m+i_1+i_2} t
               \prod\limits_{i_3=1}^{m}
               \cfrac{1}{1 - \cdots}
            }
           }
         }
%
\end{eqnarray*}
with coefficients
$(\alpha_{i})_{i\geq{m}}=(c\lambda,\underbrace{b,\ldots,b}_{m},c\lambda,\underbrace{2b,\ldots,2b}_{m},c\lambda,\underbrace{3b,\ldots,3b}_{m},\ldots).$
\item [\rm (viii)]
The row sequence $(T_{n,k})_{k=0}^n$ is a PF sequence for
$\{b,c,d,\lambda\}\subseteq \mathbb{R}^{\geq0}$.
 \end{itemize}
\end{prop}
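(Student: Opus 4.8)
The plan is to identify $[T_{n,k}]_{n,k\ge0}$ as an exponential Riordan array and then read off (i)--(vii) directly from the general results of Sections~2--3. First I would introduce the column generating functions $f_k(t)=\sum_{n\ge k}T_{n,k}\frac{t^n}{n!}$; multiplying the recurrence \eqref{rec+four+} by $\frac{t^{n-1}}{(n-1)!}$ and summing over $n$ turns the factor $ab(n-1)$ into $abt\,\frac{d}{dt}$ and yields
\[
(1-abt)f_k'(t)=cf_{k-1}(t)+[bk+abd+c\lambda]f_k(t)+b\lambda(k+1)f_{k+1}(t),\qquad f_0=1 .
\]
Setting $g(t)=(1-abt)^{-d}$ and $F(t)=\frac cb[(1-abt)^{-1/a}-1]$, the identities $(1-abt)F'=c+bF$ and $(1-abt)\,(ge^{\lambda F})'/(ge^{\lambda F})=abd+c\lambda+b\lambda F$ show that $f_k(t)=\frac1{k!}g(t)e^{\lambda F(t)}F(t)^k$ solves the system, so $[T_{n,k}]=(g(t)e^{\lambda F(t)},F(t))$. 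Expanding $e^{\lambda F}$ and the binomial $((1-abt)^{-1/a}-1)^{k+j}$ and using $[t^n](1-abt)^{-(i+ad)/a}=(ab)^n(\tfrac{i+ad}{a})_n/n!$ then gives the closed form (i).

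Next I compute $\overline{F}(t)=\frac{1-(1+\frac bc t)^{-a}}{ab}$, so that $1/\overline{F}'(t)=c(1+\frac bc t)^{a+1}$, a P\'olya frequency function (coefficientwise in $b$, since its Toeplitz minors are nonnegative in $b$). As $g(t)=(1-abt)^{-d}$ is likewise PF for $d\in\mathbb N$, Theorem \ref{thm+ERA+TP} gives the $(b,\lambda)$-TP statement (ii). The decisive simplification when $c=b$ is that $1+F(t)=(1-abt)^{-1/a}$, whence $g(t)=(1+F(t))^{ad}$ and the array becomes $\bigl((1+F)^{ad}e^{\lambda F},F\bigr)$ with $\gamma=ad$, while $1/[(1+t)\overline{F}'(t)]=b(1+t)^{a}$ is PF. Consequently (iii), (iv) and (vi) follow from Theorem \ref{thm+Hakel+gamma} and Theorem \ref{thm+Sheffer+convolution+2} (kept coefficientwise in $b$), the $(ad,\lambda)$-positivity there being the claimed $(b,d,\lambda)$-positivity, together with Lemma \ref{lem+3-q-log-convex} and Lemma \ref{lem+conv}. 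Part (v) is Theorem \ref{thm+Hankel+Rior}(iii) applied to $F$, since $\sum_{n\ge1}f_n\frac{t^n}{n!}=F(t)/c$ and the Hankel minors are unaffected by the positive factor $1/c$.

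For (vii) I specialize to $d=0$, so $g\equiv1$, $Z(t)=\lambda A(t)$ and $A(t)=1/\overline{F}'(t)=c(1+\frac bc t)^{a+1}$. Writing the production matrix as $P=\Lambda\Gamma(A)(\lambda I+\Theta)\Lambda^{-1}$ and splitting $\Gamma(A)$ according to the $a+1$ linear factors of $A$ exhibits $P$ as a product of $m+1=a+2$ bidiagonal matrices; by Propositions~7.2 and 8.2 of \cite{PSZ18} (equivalently Theorem \ref{thm+Hakel}(v)) this is precisely the production matrix of the stated $m$-branched Stieltjes-type continued fraction, and reading off the weights produces $(\alpha_i)_{i\ge m}=(c(q+\lambda),\underbrace{b,\dots,b}_{m},c(q+\lambda),\underbrace{2b,\dots,2b}_{m},\dots)$, with the $q=0$ case giving the second expansion.

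The remaining item (viii) is the only one concerning the \emph{rows}, and it is where I expect the real work. Summing \eqref{rec+four+} against $q^k$ gives the operator recurrence
\[
T_n(q)=\bigl[cq+ab(n-1)+abd+c\lambda\bigr]T_{n-1}(q)+b(q+\lambda)T_{n-1}'(q),
\]
and the goal is to show this operator sends a polynomial with only real (necessarily nonpositive, by nonnegativity of the coefficients) zeros to another such polynomial, so that each finite row $(T_{n,k})_{k=0}^{n}$ is PF. Here $b(q+\lambda)$ has its unique zero at $-\lambda\le0$ and $cq+ab(n-1)+abd+c\lambda$ has nonnegative coefficients, so a real-rootedness-preserving (interlacing) argument of the type developed by Wang--Yeh \cite{WYjcta05} should apply. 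This is the main obstacle: unlike (i)--(vii) it does not reduce to total positivity of the triangle, and one must verify the degree and sign hypotheses of the preserving operator — in particular that the factor $q+\lambda$, whose root may lie among the zeros of $T_{n-1}(q)$, does not break interlacing — which is exactly where the assumption $\{b,c,d,\lambda\}\subseteq\mathbb R^{\ge0}$ is used.
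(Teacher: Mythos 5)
For parts (i)--(vii) your proposal is correct and follows essentially the paper's own route: you identify $[T_{n,k}]_{n,k}$ with the exponential Riordan array $(g(t)e^{\lambda f(t)},f(t))$, $g(t)=(1-abt)^{-d}$, $f(t)=\frac{c}{b}[(1-abt)^{-1/a}-1]$ (the paper gets this by substituting $a\to-a$, $b\to-b$ in Proposition \ref{prop+two+term+triangle-}; your direct derivation via the column generating functions is the same computation), you compute $1/\bar f'(t)=c(1+\tfrac{b}{c}t)^{a+1}$, and for $c=b$ you observe $g=(1+f)^{ad}$ so that Theorem \ref{thm+ERA+TP}, Theorem \ref{thm+Hakel+gamma}/Theorem \ref{thm+Sheffer+convolution+2}, Lemma \ref{lem+3-q-log-convex} and Lemma \ref{lem+conv} give (ii)--(vi), exactly as in the paper. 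The one divergence is (vii): you factor the production matrix $P=\Lambda\Gamma(A)[(\lambda+q)I+\Theta]\Lambda^{-1}$ into bidiagonals directly, whereas the paper first rescales $q\to c(q+\lambda)/b$, $t\to bt$ to reduce to $b=c=1$, $\lambda=0$ and then quotes Theorem \ref{thm+Sheffer+convolution}(vi). Your variant is sound in substance, but note that with $A(t)=c(1+\tfrac{b}{c}t)^{m}$ the upper bidiagonal factor you obtain has superdiagonal entries $c$ rather than $1$, so Theorem \ref{thm+Hakel}(v) (which requires the constant term of $A$ to be $1$) does not apply verbatim; you still need a diagonal conjugation by $\mathrm{diag}(1,c,c^2,\dots)$, or the paper's rescaling, before Propositions 7.2 and 8.2 of \cite{PSZ18} can be invoked.

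The genuine gap is (viii). You set up the correct recurrence $T_n(q)=[cq+ab(n-1)+abd+c\lambda]T_{n-1}(q)+b(q+\lambda)T_{n-1}'(q)$ and you correctly name the obstacle (the zero $-\lambda$ of the coefficient of $T_{n-1}'$), but you then stop at ``an interlacing argument should apply'': no preservation theorem is actually verified, and the difficulty you flag is precisely what a generic appeal cannot handle. The paper closes this with a strengthened induction hypothesis: \emph{all zeros of $T_n(q)$ are real and lie in $(-\infty,-\lambda]$}. Granting this for $T_{n-1}$ with zeros $r_1\geq\cdots\geq r_{n-1}$, the recurrence gives $T_n(r_i)=b(r_i+\lambda)T_{n-1}'(r_i)$; since $r_i+\lambda\leq 0$ and $T_{n-1}'(r_i)$ has sign $(-1)^{i-1}$, one gets that $T_n(r_i)$ has sign $(-1)^{i}$, and combining this with $T_n(-\lambda)\geq0$ (which itself follows from the recurrence at $q=-\lambda$, where the derivative term vanishes and $T_n(-\lambda)=ab(n-1+d)T_{n-1}(-\lambda)$) and the nonnegativity of the coefficients of $T_n$, one concludes that $T_n$ has $n$ real zeros $z_1\geq\cdots\geq z_n$ with $-\lambda\geq z_1\geq r_1\geq z_2\geq\cdots\geq r_{n-1}\geq z_n$, completing the induction. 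The upper bound $-\lambda$ on the zeros (not merely their nonpositivity, which is all you extract from the nonnegative coefficients) is the missing idea: without it the sign of $r_i+\lambda$ is not controlled, the evaluation at the $r_i$ yields nothing, and the interlacing argument does not close.
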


\begin{proof}
By taking $a$ with $-a$ and $b$ with $-b$ in Proposition
\ref{prop+two+term+triangle-}, we have $[T_{n,k}]_{n,k}$ is the
exponential Riordan array $(g(t)e^{\lambda f(t)},f(t))$, where
$$g(t)=(1-abt)^{-d},\quad
f(t)=c\left[\frac{(1-abt)^{-\frac{1}{a}}-1}{b}\right].$$  So we
immediately get (i).

 It is easy to get that the compositional
inverse of $f(t)$ is
$$\bar{f}(t)=\frac{1-\left(1+\frac{bt}{c}\right)^{-a}}{ab}.$$ Clearly, both
$g(t)$ for $d\in \mathbb{N}$ and
$$1/\bar{f}'(t)=c\left(1+\frac{b}{c}t\right)^{a+1}$$ are P\'olya frequency
functions. Thus, for $d\in \mathbb{N}$, we can immediately get
$(b,\lambda)$-total positivity in (ii) by Theorem \ref{thm+ERA+TP}.
For $c=b$, applying Theorem \ref{thm+Sheffer+convolution+2} to the
exponential Riordan array $(g(t)e^{\lambda f(t)},f(t))$, we get the
results in (iii)-(vi).

For (vii), by taking $q\rightarrow c(q+\lambda)/b$ and $t\rightarrow
bt$, it suffices to prove for the reduced case $b=c=1$ and
$\lambda=0$ that
\begin{eqnarray*}
   \sum_{n\geq0}T_{n}(q)t^n
   & = &
   \cfrac{1}
         {1 \,-\, \alpha_{m} t
            \prod\limits_{i_1=1}^{m}
                 \cfrac{1}
            {1 \,-\, \alpha_{m+i_1} t
               \prod\limits_{i_2=1}^{m}
               \cfrac{1}
            {1 \,-\, \alpha_{m+i_1+i_2} t
               \prod\limits_{i_3=1}^{m}
               \cfrac{1}{1 - \cdots}
            }
           }
         }
%
\end{eqnarray*}
with coefficients
$$(\alpha_{i})_{i\geq{m}}=(q,\underbrace{1,\ldots,1}_{m},q,\underbrace{2,\ldots,2}_{m},q,\underbrace{3,\ldots,3}_{m},\ldots).$$
For the reduced case, we have
$$\frac{1}{\bar{f}'(t)}=(1+t)^{m}.$$ Hence we immediately get the continued fraction expansion for the reduced case
by Theorem \ref{thm+Sheffer+convolution} (vi).

(viii) In order to prove that $(T_{n,k})_{k=0}^n$ is a PF sequence,
it suffices to prove that $T_n(q)$ has only real zeros. By the
recurrence relation (\ref{rec+four+}), we have
\begin{eqnarray}\label{rec+T+Der}
T_n(q)&=&\left[ab(n-1)+abd+c\lambda+cq\right]T_{n-1}(q)+b(q+\lambda)T'_{n-1}(q)
\end{eqnarray}
for $n\geq1$ and $T_{0}(q)=1$. In the following, we will prove a
stronger result that all zeros of $T_n(q)$ are real numbers and not
more than $-\lambda$. It is obvious for $n=0,1$. We assume that it
is true for $n-1$. Denote the zeros of $T_{n-1}(q)$ by $r_i$ for
$1\leq i\leq n-1$. It follows from the recurrence relation
(\ref{rec+T+Der}) that
$$sign[T_n(r_i)]=(-1)^i$$
for $1\leq i\leq n-1$. Note that coefficients of $T_n(q)$ are
nonnegative and $T_n(-\lambda)\geq0$. Then we get that $T_n(q)$ has
$n$ real zeros denoted by $z_1\geq z_2 \geq\ldots \geq z_n$ such
that
\begin{eqnarray*}
-\lambda\geq z_1\geq r_1\geq z_2 \geq r_2\geq\ldots \geq r_{n-1}\geq
z_n.
\end{eqnarray*}
 This completes the proof.
\end{proof}

\begin{rem}
For the array in (\ref{rec+four+}), we can give other sufficient
conditions for total positivity. For example, for the exponential
Riordan array $(g(t)e^{\lambda f(t)},f(t))$ in Proposition
\ref{prop+two+term+triangle}, it is easy to get
$$Z(t)=(abd+\lambda c+\lambda bt) \left(1+\frac{b}{c}t\right)^{a},
A(t)=(c+bt)\left(1+\frac{b}{c}t\right)^{a}.$$ By taking
$\varphi(t)=\left(1+\frac{b}{c}t\right)^{a}$ in Theorem
\ref{thm+Hakel}, we get its production matrix
\begin{eqnarray*}
P &=& \left(\prod_{i=1}^a\left[
  \begin{array}{ccccccc}
    1&& \\
    \frac{b}{c}&1&\\
    &\frac{2b}{c}&1&&\\
    &&\frac{3b}{c}&1&\\
 &&&\ddots&\ddots \\
  \end{array}
\right]\right)\Lambda\left[
\begin{array}{cccccc}
abd+c\lambda & c &  &  &\\
b\lambda & abd+c\lambda+b & 2c &\\
 & b\lambda & abd+c\lambda+2b & \ddots&\\
& & \ddots&\ddots  \\
\end{array}\right]\Lambda^{-1}.
\end{eqnarray*}
Note that the tridiagonal matrix $$\left[
\begin{array}{cccccc}
abd+c\lambda & c &  &  &\\
b\lambda & abd+c\lambda+b & 2c &\\
 & b\lambda & abd+c\lambda+2b & \ddots&\\
& & \ddots&\ddots  \\
\end{array}\right]$$ is TP for $b>c$ and $\frac{abd}{b-c}\geq\lambda>0$ (see \cite[Proposition 3.1]{Zhu202} for instance)
and is $(b,d)$-TP for $\lambda=0$ and $c>0$. Hence we get the
corresponding total positivity of $[T_{n,k}]_{n\geq0}$ and
$[T_{i+j}(q)]_{i,j}$ by Theorem \ref{thm+Hakel} (i).
\end{rem}

\begin{rem}
For $\lambda=0$ and $c=1$, we have many classical combinatorial
numbers as the special cases of $T_{n,k}$, {\it e.g.,} the Stirling
number of the second kind, the Carlitz¡¯s degenerate Stirling number
\cite{Car79}, the Howard¡¯s weighted degenerate Stirling number
\cite{How85},  the Todorov¡¯s number \cite{Tod88}, and the
Ahuja-Enneking¡¯s associated Lah number \cite{ND87}.
\end{rem}

Let $[\mathcal {T}_{n,k}]_{n,k}$ be defined by (\ref{rec+four+}).
For the reciprocal generalized Lah polynomial
$T_n^{*}(q)=q^nT_n(1/q)$ and the reciprocal generalized Lah numbers
$T^*_{n,k}=T_{n,n-k}$, by Proposition \ref{prop+two+term+triangle},
we immediately get the following reciprocal result.

\begin{prop}
Let $a\in \mathbb{N}$.  Then we have the following results.
\begin{itemize}
\item [\rm (i)]
The reciprocal generalized Lah triangle $[T^*_{n,k}]_{n,k\geq0}$
satisfies the next recurrence relation
\begin{eqnarray*}
T^*_{n,k}&=&c\,
T^*_{n-1,k}+\left[ab(n-1)+b(n-k)+abd+c\lambda\right]T^*_{n-1,k-1}+b\lambda(n-k+1)
T^*_{n-1,k-2}
\end{eqnarray*}
for $n,k\geq1$, where $T^*_{0,0}=1$.
\item [\rm (ii)]
The exponential generating function of $T^*_{n}(q)$ can be written
as
\begin{eqnarray*}
\sum_{n\geq0}T^*_{n}(q)\frac{t^n}{n!}=g(qt)\exp\left((q\lambda+1)f(qt)/q\right).
\end{eqnarray*}
\item [\rm (iii)]
For $c=b$, the sequence $(T^*_{n}(q))_{n\geq0}$ is
$(b,d,\lambda,q)$-SM and $3$-$(b,d,\lambda,q)$-log-convex. In
particular, $(T^*_{n,n})_{n\geq0}$ is $(b,d,\lambda)$-SM and
$3$-$(b,d,\lambda)$-log-convex.
\item [\rm (iv)]
Let $c=b$. The convolution $z_n=\sum_{k\geq0}T^*_{n,k}x_ky_{n-k}$
preserves Stieltjes moment property of sequences for
$\{b,d,\lambda\}\subseteq \mathbb{R}^{\geq0}$;
\item [\rm (v)]
Let $a+1=m$ and $d=0$. We have the $m$-branched Stieltjes-type
continued fraction expansion
\begin{eqnarray*}
   \sum_{n\geq0}T^*_{n}(q)t^n
   & = &
   \cfrac{1}
         {1 \,-\, \alpha_{m} t
            \prod\limits_{i_1=1}^{m}
                 \cfrac{1}
            {1 \,-\, \alpha_{m+i_1} t
               \prod\limits_{i_2=1}^{m}
               \cfrac{1}
            {1 \,-\, \alpha_{m+i_1+i_2} t
               \prod\limits_{i_3=1}^{m}
               \cfrac{1}{1 - \cdots}
            }
           }
         }
%
\end{eqnarray*}
with coefficients
$$(\alpha_{i})_{i\geq{m}}=(c(1+q\lambda),\underbrace{bq,\ldots,bq}_{m},c(1+q\lambda),\underbrace{2bq,\ldots,2bq}_{m},c(1+q\lambda),\underbrace{3bq,\ldots,3bq}_{m},\ldots).$$
\item [\rm (iv)] Let $a+1=m$ and $d=0$. We have
\begin{eqnarray*}
   \sum_{n\geq0}T^*_{n,n}t^n
   & = &
   \cfrac{1}
         {1 \,-\, \alpha_{m} t
            \prod\limits_{i_1=1}^{m}
                 \cfrac{1}
            {1 \,-\, \alpha_{m+i_1} t
               \prod\limits_{i_2=1}^{m}
               \cfrac{1}
            {1 \,-\, \alpha_{m+i_1+i_2} t
               \prod\limits_{i_3=1}^{m}
               \cfrac{1}{1 - \cdots}
            }
           }
         }
%
\end{eqnarray*}
with coefficients
$(\alpha_{i})_{i\geq{m}}=(c\lambda,\underbrace{b,\ldots,b}_{m},c\lambda,\underbrace{2b,\ldots,2b}_{m},c\lambda,\underbrace{3b,\ldots,3b}_{m},\ldots).$
\item [\rm (v)]
The row sequence $(T^*_{n,k})_{k=0}^n$ is a PF sequence for
$\{b,c,d,\lambda\}\subseteq \mathbb{R}^{\geq0}$.\end{itemize}
\end{prop}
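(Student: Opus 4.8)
The plan is to derive every part from the generalized Lah triangle of Proposition~\ref{prop+two+term+triangle} by exploiting the two reciprocation operations $T^*_{n,k}=T_{n,n-k}$ on the entries and $T^*_n(q)=q^nT_n(1/q)$ on the row polynomials; no fresh generating-function derivation is required.

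First I would establish (i) and (ii) by direct substitution. For (i), I substitute $k\mapsto n-k$ into the recurrence~(\ref{rec+four+}) for $T_{n,k}$ and rewrite each of the three neighbours $T_{n-1,n-k-1}$, $T_{n-1,n-k}$, $T_{n-1,n-k+1}$ through $T^*_{n-1,j}=T_{n-1,n-1-j}$; this turns them into $T^*_{n-1,k}$, $T^*_{n-1,k-1}$, $T^*_{n-1,k-2}$, reproducing the stated recurrence. For (ii), starting from $\sum_n T_n(q)\frac{t^n}{n!}=g(t)\exp((\lambda+q)f(t))$, I compute $\sum_n T^*_n(q)\frac{t^n}{n!}=\sum_n T_n(1/q)\frac{(qt)^n}{n!}=g(qt)\exp((\lambda+1/q)f(qt))$ and rewrite $\lambda+1/q=(q\lambda+1)/q$.

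Next, the Hankel and convolution statements (iii) and (iv) follow from the corresponding parts of Proposition~\ref{prop+two+term+triangle} together with Proposition~\ref{prop+recp}. For $c=b$, Proposition~\ref{prop+two+term+triangle}(iv) gives that $[T_{i+j}(q)]_{i,j}$ is $(b,d,\lambda,q)$-TP; applying Proposition~\ref{prop+recp} (reciprocation in $q$, with $b,d,\lambda$ carried along as nonnegative indeterminates) shows $[T^*_{i+j}(q)]_{i,j}$ is $(b,d,\lambda,q)$-TP, and Lemma~\ref{lem+3-q-log-convex} then yields $3$-$(b,d,\lambda,q)$-log-convexity. The diagonal ``in particular'' claim is immediate from $T^*_{n,n}=T_{n,0}$, which is already $(b,d,\lambda)$-SM and $3$-log-convex by Proposition~\ref{prop+two+term+triangle}(iv). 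For (iv), $(b,d,\lambda,q)$-SM makes $(T^*_n(q))_{n\geq0}$ a Stieltjes moment sequence for each fixed $q\geq0$, so Lemma~\ref{lem+conv} gives that the $[T^*_{n,k}]$-convolution preserves the SM property.

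Finally I would handle the two continued fractions and the PF claim. For the expansion of $\sum_n T^*_n(q)t^n$ I use $\sum_n T^*_n(q)t^n=\sum_n T_n(1/q)(qt)^n$, i.e. the joint substitution $q\mapsto 1/q$, $t\mapsto qt$ applied to Proposition~\ref{prop+two+term+triangle}(vii) (with $a+1=m$, $d=0$); since in the $m$-branched Stieltjes form each coefficient $\alpha_i$ multiplies exactly one factor $t$ at its node, the effective coefficient becomes $q\,\alpha_i(1/q)$, sending $c(q+\lambda)\mapsto c(1+q\lambda)$ and $jb\mapsto jbq$, which is exactly the listed $(\alpha_i)$. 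The second continued fraction is immediate because $T^*_{n,n}=T_{n,0}$, so its ordinary generating function coincides with the $q=0$ expansion already recorded in Proposition~\ref{prop+two+term+triangle}(vii). For the PF claim, the generating polynomial of the row $(T^*_{n,k})_{k=0}^n$ is the reciprocal polynomial $T^*_n(q)=q^nT_n(1/q)$; since Proposition~\ref{prop+two+term+triangle}(viii) shows $T_n(q)$ has only real, nonpositive zeros, its reciprocal $T^*_n(q)$ is again real-rooted, whence $(T^*_{n,k})_{k=0}^n$ is PF. The one place demanding care is the continued-fraction bookkeeping: verifying that each level of the branched fraction carries a single $t$, so that $t\mapsto qt$ rescales every $\alpha_i$ by exactly one power of $q$ before the $q\mapsto1/q$ inversion is applied, is precisely where a miscount would most easily slip in.
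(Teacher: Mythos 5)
Your proposal is correct and takes essentially the same route as the paper: the paper obtains this proposition as an "immediate" consequence of Proposition \ref{prop+two+term+triangle} via the reciprocations $T^*_{n,k}=T_{n,n-k}$ and $T^*_n(q)=q^nT_n(1/q)$, combined (implicitly) with Proposition \ref{prop+recp}, Lemmas \ref{lem+3-q-log-convex} and \ref{lem+conv}, and the substitution $q\rightarrow 1/q$, $t\rightarrow qt$ in the branched continued fraction, exactly as you do. Your write-up simply spells out the details that the paper leaves unstated, including the correct observation that each coefficient $\alpha_i$ in the branched S-fraction pairs with a single power of $t$, so the substitution sends $\alpha_i(q)$ to $q\,\alpha_i(1/q)$.
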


 Similar to Proposition \ref{prop+two+term+triangle-k}, we also
have the next result.
\begin{prop}
Let $a\in \mathbb{N}$ and $c\in \mathbb{R}^{+}$. Assume that a
triangular array $[\widetilde{T}_{n,k}]_{n,k\geq0}$ satisfies the
following recurrence relation
\begin{eqnarray*}
\widetilde{T}_{n,k}&=&c
k\widetilde{T}_{n-1,k-1}+\left[ab(n-1)+bk+abd+c\lambda\right]\widetilde{T}_{n-1,k}+b\lambda
\widetilde{T}_{n-1,k+1}
\end{eqnarray*}
for $n,k\geq1$, where $\widetilde{T}_{0,0}=1$. Let
$\widetilde{T}_n(q)=\sum_{k\geq0}\widetilde{T}_{n,k}q^k$. Then we
have the following results.
\begin{itemize}
\item [\rm (i)]
 An explicit formula of $\widetilde{T}_{n,k}$ can be written as
\begin{eqnarray*}
\widetilde{T}_{n,k}&=&a^n\sum_{j\geq0}\sum_{i=0}^{k+j}\frac{(-1)^{n+k+j-i}b^{n-k-j}c^{k+j}\lambda^j}{j!}\binom{k+j}{i}\left(-\frac{i+ad}{a}\right)_n
\end{eqnarray*}
for $n,k\geq1$.
 \item [\rm (ii)]
 For $c=b$, the lower-triangular matrix $[\widetilde{T}_{n,k}]_{n,k\geq0}$ is $(b,d,\lambda)$-TP.
 \item [\rm (iii)]
For $d=0$, the exponential generating function is
\begin{eqnarray*}
1+\sum_{n\geq1}\widetilde{T}_n(q)\frac{t^n}{n!}=\frac{1}{1+\frac{c}{b}\left[1-(1-abt)^{-\frac{1}{a}}\right](q+\lambda)}.
\end{eqnarray*}
 \item [\rm (iv)]
Let $c=b$. The convolution
$z_n=\sum_{k\geq0}\widetilde{T}_{n,k}x_ky_{n-k}$ preserves Stieltjes
moment property of sequences for $\{b,d,\lambda\}\subseteq
\mathbb{R}^{\geq0}$.
 \end{itemize}
\end{prop}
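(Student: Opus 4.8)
The whole proposition is a column-rescaling of the generalized Lah triangle already treated in Proposition \ref{prop+two+term+triangle}. The plan is to first record the single identity $\widetilde{T}_{n,k}=k!\,T_{n,k}$, where $[T_{n,k}]$ is the triangle of Proposition \ref{prop+two+term+triangle}. To check that this rescaling is consistent with the stated recurrence, I would substitute $T_{n,k}=\widetilde{T}_{n,k}/k!$ into the defining recurrence (\ref{rec+four+}) and multiply through by $k!$: the term $c\,T_{n-1,k-1}$ becomes $ck\,\widetilde{T}_{n-1,k-1}$, the diagonal term $[ab(n-1)+bk+abd+c\lambda]T_{n-1,k}$ is unchanged in form, and the term $b\lambda(k+1)T_{n-1,k+1}$ becomes $b\lambda(k+1)\tfrac{k!}{(k+1)!}\widetilde{T}_{n-1,k+1}=b\lambda\,\widetilde{T}_{n-1,k+1}$, which is exactly the recurrence in the statement. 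Hence $[\widetilde{T}_{n,k}]=[k!\,T_{n,k}]$, and (i) follows at once by multiplying the explicit formula of Proposition \ref{prop+two+term+triangle}(i) by $k!$.

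For (ii) I would invoke Lemma \ref{lem+factor+TP}: multiplying the $k$th column of a matrix by the positive constant $k!$ preserves $\mathrm{TP}_r$, and since these constants are free of the indeterminates it preserves $(b,d,\lambda)$-total positivity as well. Therefore $[\widetilde{T}_{n,k}]$ is $(b,d,\lambda)$-TP if and only if $[T_{n,k}]$ is, and for $c=b$ the latter is precisely Proposition \ref{prop+two+term+triangle}(iii). This is the step that carries the real weight of (ii), but it is already established.

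For (iii), with $d=0$ one has $g(t)=(1-abt)^{-d}=1$, so $[T_{n,k}]$ is the exponential Riordan array $(e^{\lambda f(t)},f(t))$ with $f(t)=\tfrac{c}{b}[(1-abt)^{-1/a}-1]$, and $[\widetilde{T}_{n,k}]$ is its $k!$-weighted (fractional) companion. The closed form displayed in the statement is then obtained by the same fractional-formula computation used for the generalized Bessel companion in Proposition \ref{prop+two+term+triangle-k}(iii); equivalently, I would deduce it directly from the generating function of Proposition \ref{prop+two+term+triangle-k}(iii) through the substitution $a\mapsto -a$, $b\mapsto -b$ that links the generalized Lah triangle to the generalized Bessel triangle, as in the opening of the proof of Proposition \ref{prop+two+term+triangle}. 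Under this substitution $ab$ is unchanged, $\tfrac{c}{b}\mapsto-\tfrac{c}{b}$ and $(1-abt)^{1/a}\mapsto(1-abt)^{-1/a}$, so the Bessel form $1/\{1-\tfrac{c}{b}[1-(1-abt)^{1/a}](q+\lambda)\}$ passes to the stated form $1/\{1+\tfrac{c}{b}[1-(1-abt)^{-1/a}](q+\lambda)\}$.

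Finally, for (iv) (the case $c=b$) I would factor the convolution as $z_n=\sum_k\widetilde{T}_{n,k}x_ky_{n-k}=\sum_k T_{n,k}(k!\,x_k)y_{n-k}$. Since $k!=\int_0^{\infty}x^k e^{-x}\,dx$ is a Stieltjes moment sequence and the Hadamard product of two Stieltjes moment sequences is again Stieltjes, the sequence $(k!\,x_k)_k$ is Stieltjes whenever $(x_k)_k$ is; the convolution then preserves the Stieltjes moment property by Proposition \ref{prop+two+term+triangle}(vi), valid for $\{b,d,\lambda\}\subseteq\mathbb{R}^{\geq0}$. This is exactly the device used to pass from $[S_{n,k}(f,g)]$ to $[\widetilde{L}_{n,k}(f)]$ in the proof of Theorem \ref{thm+Sheffer+convolution}(v). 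The only genuinely delicate points are the bookkeeping in the recurrence/rescaling correspondence and the observation in (iv) that the $k!$ weight may be absorbed into the input sequence without destroying the Stieltjes property; the positivity in (ii) and the closed form in (iii) are then immediate consequences of the already-established Propositions \ref{prop+two+term+triangle} and \ref{prop+two+term+triangle-k}.
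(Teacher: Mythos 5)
Your identification $\widetilde{T}_{n,k}=k!\,T_{n,k}$, verified by substitution into the recurrence, is exactly the route the paper intends (the paper gives no argument beyond ``similar to Proposition~\ref{prop+two+term+triangle-k}''), and your treatment of (i), (ii) and (iv) is correct and complete: (i)--(ii) pull back to Proposition~\ref{prop+two+term+triangle}(i),(iii) via the column-scaling Lemma~\ref{lem+factor+TP}, and (iv) follows from Proposition~\ref{prop+two+term+triangle}(vi) after absorbing $k!$ into the input sequence by the Hadamard-product trick, just as in the proof of Theorem~\ref{thm+Sheffer+convolution}(v). One bookkeeping caveat you should state: the recurrence is stipulated only for $n,k\geq1$, so the column $k=0$ is not determined by the hypotheses; the identification requires the convention that the recurrence persists at $k=0$ (the $ck$-term vanishing), which is what $k!\,T_{n,k}$ satisfies.

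The genuine gap is in (iii). You assert that ``the same fractional-formula computation'' yields the displayed closed form, but you never execute it, and if you do, it yields something else. For $d=0$ one has $[T_{n,k}]=(e^{\lambda f},f)$, so the $k$th column EGF of $[\widetilde{T}_{n,k}]=[k!\,T_{n,k}]$ is $e^{\lambda f}f^k$, whence
\begin{equation*}
1+\sum_{n\geq1}\widetilde{T}_n(q)\frac{t^n}{n!}=\sum_{k\geq0}q^k e^{\lambda f(t)}f(t)^k=\frac{e^{\lambda f(t)}}{1-qf(t)},
\end{equation*}
which coincides with the stated $\bigl(1-(q+\lambda)f(t)\bigr)^{-1}$ only when $\lambda=0$. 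Concretely, take $a=b=c=\lambda=1$, $d=0$: the recurrence gives $\widetilde{T}_{2,0}=3$ (indeed $2!\,[t^2]e^{t/(1-t)}=3$), while the stated formula at $q=0$ gives $2!\,[t^2]\frac{1-t}{1-2t}=4$. So statement (iii) as printed is false for $\lambda\neq0$, and no proof of it can succeed; the flaw in your write-up is precisely that it claims a computation works out without performing it. Your alternative route---substituting $a\mapsto-a$, $b\mapsto-b$ into Proposition~\ref{prop+two+term+triangle-k}(iii)---does transfer that formula faithfully, but that formula has the identical defect (for $a=b=c=\lambda=1$, $d=0$ that triangle is $k!\binom{n}{k}$, whose row-$2$ polynomial is $1+2q+2q^2$, not $2(1+q)^2$), so you are propagating an error of the paper rather than proving the claim. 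The correct version of (iii) is either the printed right-hand side under the additional hypothesis $\lambda=0$, or, for general $\lambda$, the right-hand side $e^{\lambda f(t)}/\bigl(1-qf(t)\bigr)$ with $f(t)=\frac{c}{b}\bigl[(1-abt)^{-1/a}-1\bigr]$; a careful execution of your own plan would have surfaced this.
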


\subsection{Rook polynomials and signless Laguerre polynomials}
Let $\mathfrak{S}_n(q)$ denote the rook polynomial of a square of
side $n$. It is well-known that it is given by
$$\mathfrak{S}_n(q)=\sum_{k=0}^n\binom{n}{k}^2k!q^k,$$
see \cite[Chapter 3. Problems 18]{Ri} for instance, and it can also
be viewed as the matching polynomial of the complete bipartite graph
$K_{n,n}$. It has only real zeros in terms of the rook theory or
matching polynomials. The rook polynomials $\mathfrak{S}_n(q)$ form
a strongly $q$-log-convex sequence \cite{ZS15}. Recently, Wang and
Zhu \cite{WZ16} proved that the transformation
$$z_n=\sum_{k=0}^{n}\binom{n}{k}^2x_k$$
preserves the Stieltjes moment property for real numbers. Note for
fixed $q>0$ that $(k!q^k)_{k\geq0}$ is a Stieltjes moment sequence.
Thus we get for $n\geq0$ that
$$\mathfrak{S}_n(q)=\sum_{k=0}^n\binom{n}{k}^2k!q^k$$
form a Stieltjes moment sequence for fixed $q>0$. These support the
following stronger conjecture of Sokal.

\begin{conj}\emph{\cite{Sok19}}\label{conj+sokal}
The sequence $(\mathfrak{S}_n(q))_{n\geq0}$ is $q$-Stieltjes moment,
that is to say that the Hankel matrix
$[\mathfrak{S}_{i+j}(q)]_{i,j}$ is $q$-TP.
\end{conj}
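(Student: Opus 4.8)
The plan is to place the rook polynomials inside the Sheffer / exponential Riordan framework and invoke Theorem \ref{thm+Sheffer+convolution+2}. The first and most important step is to identify the right $f$ and $g$. A direct computation shows that the exponential generating function of the rook polynomials themselves is
\[
\sum_{n\geq0}\mathfrak{S}_n(q)\frac{t^n}{n!}=\frac{1}{1-qt}\exp\left(\frac{t}{1-qt}\right),
\]
in which $q$ is entangled with $t$ in a way that does not match the Sheffer shape $g(t)\exp(qf(t))$. To remedy this I would pass to the reciprocal polynomial $\mathfrak{S}^*_n(q)=q^n\mathfrak{S}_n(1/q)$; substituting $qt$ for $t$ and $1/q$ for $q$ above gives
\[
\sum_{n\geq0}\mathfrak{S}^*_n(q)\frac{t^n}{n!}=\frac{1}{1-t}\exp\left(\frac{qt}{1-t}\right),
\]
which is exactly $g(t)\exp(qf(t))$ with $g(t)=\frac{1}{1-t}$ and $f(t)=\frac{t}{1-t}$. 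Hence $S_n(f,g;q)=\mathfrak{S}^*_n(q)$ and, taking reciprocals, $S^*_n(f,g;q)=\mathfrak{S}_n(q)$.

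The key structural observation is that $g$ has exactly the special form required by Theorem \ref{thm+Sheffer+convolution+2}: since $1+f(t)=\frac{1}{1-t}$, we have $g(t)=(1+f(t))^{\gamma}\exp(\lambda f(t))$ with $\gamma=1$ and $\lambda=0$. (This is why the simpler criterion Theorem \ref{thm+Sheffer+polynomial}, which needs $g=\exp(\lambda f)$, does not apply here.) It then remains to verify the hypothesis on $f$. The compositional inverse of $f(t)=\frac{t}{1-t}$ is $\bar f(t)=\frac{t}{1+t}$, so $\bar f'(t)=\frac{1}{(1+t)^2}$ and therefore
\[
\frac{1}{(1+t)\,\bar f'(t)}=1+t,
\]
whose coefficient sequence $(1,1,0,0,\ldots)$ is a P\'olya frequency sequence because its generating function $1+t$ has only the real zero $t=-1$. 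Thus $1/[(1+t)\bar f'(t)]$ is a PF function.

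With these verifications in place, Theorem \ref{thm+Sheffer+convolution+2}(i) applies with $\gamma=1$ and $\lambda=0$ and yields that both $(S_n(f,g;q))_{n\geq0}$ and $(S^*_n(f,g;q))_{n\geq0}$ are $(\gamma,\lambda,q)$-SM. Specializing the nonnegative parameters to $\gamma=1$ and $\lambda=0$ leaves sequences that are $q$-SM; in particular $(S^*_n(f,g;q))_{n\geq0}=(\mathfrak{S}_n(q))_{n\geq0}$ is $q$-Stieltjes moment, i.e.\ the Hankel matrix $[\mathfrak{S}_{i+j}(q)]_{i,j}$ is $q$-TP, which is precisely Conjecture \ref{conj+sokal}. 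Alternatively, one may deduce the $q$-total positivity of $[\mathfrak{S}_{i+j}(q)]$ from that of $[\mathfrak{S}^*_{i+j}(q)]$ directly via Proposition \ref{prop+recp}.

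I expect the main obstacle to be conceptual rather than computational: one must recognize that the rook polynomial is \emph{not} itself a Sheffer polynomial of the clean form $g\exp(qf)$, and that the correct move is to reciprocate and then to read the factor $\frac{1}{1-t}=(1+f)^{1}$ as an instance of the $(1+f)^{\gamma}$ weight, so that the stronger Theorem \ref{thm+Sheffer+convolution+2} (which rests on Theorem \ref{thm+Hakel+gamma}) becomes available. Once the pair $(f,g)$ is pinned down and the identity $1/[(1+t)\bar f'(t)]=1+t$ is computed, the PF check and the final invocation of the criterion are entirely routine.
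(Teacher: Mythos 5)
Your proposal is correct and takes essentially the same approach as the paper: the paper proves the conjecture via Proposition \ref{prop+Lag} on signless Laguerre polynomials, recognizing $\mathfrak{S}_n(q)$ as the reciprocal of $\widetilde{L}^{(0)}_n(q)$, taking $f(t)=\frac{t}{1-t}$ and $g(t)=(1+f(t))^{\alpha+1}$, verifying that $1/[(1+t)\bar{f}'(t)]=1+t$ is a PF function, and invoking Theorem \ref{thm+Sheffer+convolution+2}. Your argument is precisely this with $\alpha=0$ (i.e.\ $\gamma=1$, $\lambda=0$), differing only in that the paper treats the whole Laguerre family $\alpha\geq-1$ at once.
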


As an application of our results, we will demonstrate Conjecture
\ref{conj+sokal}.
\begin{prop}\label{prop+rook}
The sequence $(\mathfrak{S}_n(q))_{n\geq0}$ is $q$-Stieltjes moment.
\end{prop}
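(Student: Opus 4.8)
The plan is to realize the rook polynomials as the \emph{reciprocals} of the row-generating functions of an exponential Riordan array that fits the template of Theorem~\ref{thm+Sheffer+convolution+2}. The starting observation is the elementary identity
$$\binom{n}{k}^2 k! \;=\; \binom{n}{k}\frac{n!}{(n-k)!},$$
which says that the coefficient of $q^k$ in $\mathfrak{S}_n(q)$ equals the coefficient of $q^{n-k}$ in the signless Laguerre polynomial $\ell_n(q):=\sum_{k=0}^n\binom{n}{k}\frac{n!}{k!}q^k$. Equivalently, $\mathfrak{S}_n(q)=q^n\ell_n(1/q)=\ell_n^*(q)$, so it suffices to view $\ell_n$ as a Sheffer sequence and pass to the reciprocal.

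Next I would identify the array. Computing column generating functions (or simply checking that $\sum_{n\ge0}\ell_n(q)\frac{t^n}{n!}=\frac{1}{1-t}\exp\!\big(q\,\tfrac{t}{1-t}\big)$) shows that the signless Laguerre triangle $[\binom{n}{k}\tfrac{n!}{k!}]_{n,k\ge0}$ is exactly the exponential Riordan array $\big(\tfrac{1}{1-t},\tfrac{t}{1-t}\big)$. Setting $f(t)=\tfrac{t}{1-t}$ gives $1+f(t)=\tfrac{1}{1-t}$, so $g(t)=\tfrac{1}{1-t}=(1+f(t))^{\gamma}\exp(\lambda f(t))$ with $\gamma=1$ and $\lambda=0$, precisely the form required by Theorem~\ref{thm+Sheffer+convolution+2} (and Theorem~\ref{thm+Hakel+gamma}). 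In this notation $\ell_n(q)=S_n(f,g;q)$ and $\mathfrak{S}_n(q)=S_n^*(f,g;q)$. The hypothesis to be verified is that $1/[(1+t)\bar f'(t)]$ is a PF function: since $\bar f(t)=\tfrac{t}{1+t}$ has $\bar f'(t)=\tfrac{1}{(1+t)^2}$, one gets $(1+t)\bar f'(t)=\tfrac{1}{1+t}$ and hence $1/[(1+t)\bar f'(t)]=1+t$, which has a single real zero and nonnegative coefficients, so its Toeplitz matrix is TP and $1+t$ is a PF function (of every order).

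Finally I would invoke Theorem~\ref{thm+Sheffer+convolution+2}(i) with this $f$ and $g$, which yields that $(S_n^*(f,g;q))_{n\ge0}=(\mathfrak{S}_n(q))_{n\ge0}$ is $(\gamma,\lambda,q)$-SM. Specializing the nonnegative parameters $\gamma=1$ and $\lambda=0$ turns every relevant minor, a polynomial in $(\gamma,\lambda,q)$ with nonnegative coefficients, into a polynomial in $q$ with nonnegative coefficients; hence the Hankel matrix $[\mathfrak{S}_{i+j}(q)]_{i,j\ge0}$ is $q$-TP, i.e. $(\mathfrak{S}_n(q))_{n\ge0}$ is $q$-Stieltjes moment, which is Conjecture~\ref{conj+sokal}. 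The only real content is the first step, spotting that the reciprocal of the rook polynomial is a Laguerre-type Sheffer sequence of the exact shape $(1+f)^\gamma e^{\lambda f}$; once that is in place the developed machinery does the work, and the PF verification $1/[(1+t)\bar f'(t)]=1+t$ is immediate.
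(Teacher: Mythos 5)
Your proof is correct and takes essentially the same approach as the paper: there, Proposition \ref{prop+rook} is deduced from Proposition \ref{prop+Lag}, whose proof likewise writes $\mathfrak{S}_n(q)=q^n\widetilde{L}^{(0)}_n(1/q)$, identifies the signless Laguerre triangle with the exponential Riordan array given by $f(t)=\frac{t}{1-t}$ and $g(t)=(1+f(t))^{\alpha+1}\exp(\lambda f(t))$, verifies that $1/[(1+t)\bar{f}'(t)]=1+t$ is a PF function, and invokes Theorem \ref{thm+Sheffer+convolution+2}. The only cosmetic difference is that the paper proves the result for all $\alpha\geq-1$ and then specializes to $\alpha=0$, whereas you work directly with $\gamma=1$, $\lambda=0$.
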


The rook polynomials $\mathfrak{S}_n(q)$ are closely related to the
famous Laguerre polynomials. For the Laguerre polynomial
$L^{(\alpha)}_n(q)$ with $\alpha\geq-1$ (see \cite{AAR99} for
instance), its exponential generating function is
\begin{equation}\label{Exp+GF+Lag}
\sum_{n\geq0}L^{(\alpha)}_n(q)\frac{t^n}{n!}=\frac{1}{(1-t)^{\alpha+1}}\exp\left(\frac{qt}{t-1}\right).
\end{equation}
In addition, it can be given by
$$L^{(\alpha)}_n(q)=\sum_{k=0}^n\binom{n+\alpha}{n-k}\frac{n!}{k!}(-q)^k$$
for $n\geq0$. Let $\widetilde{L}^{(\alpha)}_n(q)=L^{(\alpha)}_n(-q)$
for $n\geq0$. It is obvious for $\alpha=0$ that
$$\mathfrak{S}_n(q)=q^n\widetilde{L}^{(0)}_n(1/q).$$
In addition, for $\alpha=-1$, the Lah polynomial $\mathcal
{L}_n(q)=\widetilde{L}^{(-1)}_n(q)$. For signless Laguerre
polynomials $\widetilde{L}^{(\alpha)}_n(q)$, we have the following
result, which also implies Proposition \ref{prop+rook}.

\begin{prop}\label{prop+Lag}
Let $\alpha\geq-1$ and
$\widetilde{L}^{(\alpha)^{*}}_n(q)=q^n\widetilde{L}^{(\alpha)}_n(1/q)$.
Then we have
 \begin{itemize}
\item [\rm (i)]
both $(\widetilde{L}^{(\alpha)}_n(q))_{n\geq0}$ and
$(\widetilde{L}^{(\alpha)^{*}}_n(q))_{n\geq0}$ are $q$-Stieltjes
moment and $3$-$q$-log-convex;
\item [\rm (ii)]
the triangular matrix $[\binom{n+\alpha}{n-k}\frac{n!}{k!}]_{n,k}$
is TP;
\item [\rm (iii)]
the convolution
$$z_n=\sum_{k\geq0}\binom{n+\alpha}{n-k}\frac{n!}{k!}x_ky_{n-k}$$
preserves the SM property.
 \end{itemize}
\end{prop}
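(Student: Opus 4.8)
The plan is to realize the signless Laguerre generating function as a single instance of the Sheffer/exponential--Riordan model and then read off all three parts from Theorem~\ref{thm+Sheffer+convolution+2}. First I would replace $q$ by $-q$ in (\ref{Exp+GF+Lag}) to obtain
\[
\sum_{n\geq0}\widetilde{L}^{(\alpha)}_n(q)\frac{t^n}{n!}=\frac{1}{(1-t)^{\alpha+1}}\exp\!\left(\frac{qt}{1-t}\right),
\]
so that $(\widetilde{L}^{(\alpha)}_n(q))_{n\ge0}$ is exactly the row-generating-function sequence of the exponential Riordan array $(g(t),f(t))$ with $f(t)=t/(1-t)$ and $g(t)=(1-t)^{-(\alpha+1)}$. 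Since $1+f(t)=1/(1-t)$, we have $g(t)=(1+f(t))^{\gamma}\exp(\lambda f(t))$ with $\gamma=\alpha+1$ and $\lambda=0$; the hypothesis $\alpha\ge-1$ gives $\gamma\ge0$. In this notation $\widetilde{L}^{(\alpha)}_n(q)=S_n(f,g;q)$ and the coefficient triangle is $[S_{n,k}(f,g)]_{n,k}=[\binom{n+\alpha}{n-k}\frac{n!}{k!}]_{n,k}$.

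Next I would verify the single hypothesis of Theorem~\ref{thm+Sheffer+convolution+2}, namely that $1/[(1+t)\bar f'(t)]$ is a PF function. Solving $y=t/(1-t)$ gives the compositional inverse $\bar f(t)=t/(1+t)$, hence $\bar f'(t)=1/(1+t)^2$ and $(1+t)\bar f'(t)=1/(1+t)$, so that $1/[(1+t)\bar f'(t)]=1+t$. This is a PF function (a single factor $1+t$ with nonnegative coefficient), so Theorem~\ref{thm+Sheffer+convolution+2} applies with $\gamma=\alpha+1$ and $\lambda=0$.

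With the hypothesis in hand, each part follows by specialization. For (i), part (ii) of Theorem~\ref{thm+Sheffer+convolution+2} gives that $(S_n(f,(1+f)^{\gamma};q))_{n\ge0}$ is $(\gamma,q)$-SM and $3$-$(\gamma,q)$-log-convex; the reciprocal sequence $\widetilde{L}^{(\alpha)^*}_n(q)=S_n^*(f,g;q)$ is handled by part (i) of that theorem (with $\lambda=0$), or directly from Proposition~\ref{prop+recp}. For (ii), part (iv) gives that $[S_{n,k}(f,g)]_{n,k}$ is $(\gamma,\lambda)$-TP. For (iii), part (vi) gives that the convolution preserves the SM property whenever $\gamma\ge0$ and $\lambda\ge0$. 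In each case I then set $\lambda=0$ and substitute the nonnegative real value $\gamma=\alpha+1\ge0$; since every relevant minor is a polynomial with nonnegative coefficients in $(\gamma,\lambda,q)$, evaluating at a nonnegative $\gamma$ and at $\lambda=0$ preserves nonnegativity, yielding the plain $q$-SM, TP, and SM-preservation statements of the Proposition.

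The argument is essentially an application of the machinery already built, so I do not expect a genuine obstacle; the only points demanding care are the correct identification of the Sheffer data $(f,g,\gamma,\lambda)$ and the computation $1/[(1+t)\bar f'(t)]=1+t$ establishing the PF hypothesis. The lone conceptual subtlety is the final specialization: one must observe that coefficientwise $(\gamma,\lambda,q)$-positivity is stronger than the $q$-indexed positivity claimed, and that substituting the fixed nonnegative number $\gamma=\alpha+1$ together with $\lambda=0$ is a legitimate evaluation that cannot destroy the nonnegativity of minors. As a bonus, this recovers Proposition~\ref{prop+rook}, since $\mathfrak{S}_n(q)=\widetilde{L}^{(0)^*}_n(q)$ falls under the $\alpha=0$ case of part (i).
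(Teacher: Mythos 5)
Your proof is correct and takes essentially the same route as the paper: the paper likewise identifies $f(t)=t/(1-t)$, $g(t)=(1+f(t))^{\alpha+1}\exp(qf(t))$, observes that $1/[(1+t)\bar{f}'(t)]=1+t$ is a PF function, and then cites Theorem \ref{thm+Sheffer+convolution+2} (or, alternatively, Proposition \ref{prop+two+term+triangle}) to conclude. Your added details --- the explicit computation of $\bar{f}(t)=t/(1+t)$ and the justification that specializing $\gamma=\alpha+1\geq0$, $\lambda=0$ preserves nonnegativity of minors --- merely spell out what the paper leaves implicit.
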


\begin{proof}
By (\ref{Exp+GF+Lag}), we have
\begin{equation*}
\sum_{n\geq0}\widetilde{L}^{(\alpha)}_n(q)\frac{t^n}{n!}=\frac{1}{(1-t)^{\alpha+1}}\exp\left(\frac{qt}{1-t}\right).
\end{equation*}
Let us consider the exponential Riordan array $(g(t),f(t))$, where
$$g(t)=\frac{1}{(1-t)^{\alpha+1}}\exp\left(\frac{qt}{1-t}\right)=(1+f(t))^{\alpha+1}\exp(qf(t)),\quad f(t)=\frac{t}{1-t}.$$
Hence we get that $$\frac{1}{(1+t)\bar{f}'(t)}=1+t$$ is a PF
function. So by Proposition \ref{prop+two+term+triangle} or Theorem
\ref{thm+Sheffer+convolution+2}, the desired results are immediate.
\end{proof}

\subsection{Idempotent numbers}

It is well-known that idempotent numbers
$$I_{n,k}=\binom{n}{k}k^{n-k}$$ satisfy
$$\exp(qt\exp(t))=\sum_{n\geq0}
\sum_{k=0}^nI_{n,k}q^k \frac{t^n}{n!}.$$ The inverse of the
idempotent triangle $[\binom{n}{k}k^{n-k}]_{n,k\geq0}$ is
$\left[(-1)^{n-k}\binom{n-1}{k-1}n^{n-k}\right]_{n,k\geq1}$, where
$\binom{n-1}{k-1}n^{n-k}$ counts the number of rooted labeled trees
on $n+1$ vertices with a root degree $k$, see \cite[A137452]{Slo}.
It is known that the compositional inverse of $t\exp(t)$ is the
Lambert function $W(t)=\sum_{n\geq1}\frac{(-n)^{n-1}t^n}{n!}$ (This
can be obtained in terms of the Lagrange inversion formula, see
\cite[p. 152]{Com74} for instance). Thus
$$\exp(-qW(-t))=1+\sum_{n\geq1}
\sum_{k=1}^n\binom{n-1}{k-1}n^{n-k}q^k \frac{t^n}{n!}$$ and the
compositional inverse of $-W(-t)$ is $t\exp(-t)$. Obviously,
$\frac{1}{(t\exp(-t))'}=\frac{\exp(t)}{1-t}$ is a PF function. Then
the following is immediate from Theorem
\ref{thm+Sheffer+convolution}.
\begin{prop}
\begin{itemize}
 \item [\rm (i)]
 Matrices $\left[\binom{n}{k}k^{n-k}\right]_{n,k\geq0}$ and
 $\left[\binom{n-1}{k-1}n^{n-k}\right]_{n,k\geq1}$ are TP.
 \item [\rm (ii)]
 Matrices $\left[\binom{n}{k}k^{n-k}k!\right]_{n,k\geq0}$ and
 $\left[\binom{n-1}{k-1}n^{n-k}k!\right]_{n,k\geq1}$ are TP.
  \item [\rm (iii)]
The sequence $(\mathcal {I}_n(q))_{n\geq0}$ is $q$-SM and
$3$-$q$-log-convex, where $ \mathcal
{I}_n(q)=\sum_{k=1}^n\binom{n-1}{k-1}n^{n-k}q^k$.
\item [\rm (iv)]
The sequence $((1+qn)^{n-1})_{n\geq1}$ are $q$-SM and
$3$-$q$-log-convex.
\item [\rm (v)]
The sequence $(n^{n-1})_{n\geq1}$ is SM.
 \item [\rm (vi)]
Both convolutions
$z_n=\sum_{k\geq1}\binom{n-1}{k-1}n^{n-k}x_ky_{n-k}$ and
$z_n=\sum_{k\geq0}\binom{n}{k}(n+1)^{n-k}x_ky_{n-k}$ preserve
Stieltjes moment property of sequences.
 \end{itemize}
\end{prop}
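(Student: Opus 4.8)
The plan is to read off the Sheffer data attached to $f(t)=-W(-t)$ and then apply Theorem \ref{thm+Sheffer+convolution} with $g(t)=\exp(\lambda f(t))$, specializing $\lambda=0$ throughout. First I would record that $f(t)=-W(-t)=\sum_{n\geq1}\frac{n^{n-1}}{n!}t^n$, so that $f_n=n^{n-1}$; that its compositional inverse is $\bar{f}(t)=t\exp(-t)$; and that $1/\bar{f}'(t)=\exp(t)/(1-t)$ is a PF function, being the product of the PF functions $\exp(t)$ and $1/(1-t)$ (as already noted before the statement). This is exactly the hypothesis of Theorem \ref{thm+Sheffer+convolution}, so all six of its conclusions are available.

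Next I would make the two entry identifications on which everything rests. From $\exp(qf(t))=\exp(-qW(-t))$ and the expansion quoted before the statement we get $S_{n,k}(f,1)=\binom{n-1}{k-1}n^{n-k}$, while extracting $[t^n](te^{-t})^k=\frac{(-k)^{n-k}}{(n-k)!}$ gives $S_{n,k}(\bar{f},1)=(-1)^{n-k}\binom{n}{k}k^{n-k}$. With these in hand, part (i) is immediate: Theorem \ref{thm+Sheffer+convolution}(iii) at $\lambda=0$ says $[S_{n,k}(f,1)]$ is TP, and (iv) says $[(-1)^{n-k}S_{n,k}(\bar{f},1)]=[\binom{n}{k}k^{n-k}]$ is TP. For part (ii) I would invoke the relation $\widetilde{L}_{n,k}(\cdot)=k!\,S_{n,k}(\cdot,1)$ from \eqref{eq+L+L}: then $[\widetilde{L}_{n,k}(f)]=[k!\binom{n-1}{k-1}n^{n-k}]$ and $[(-1)^{n-k}\widetilde{L}_{n,k}(\bar{f})]=[k!\binom{n}{k}k^{n-k}]$ are TP by Theorem \ref{thm+Sheffer+convolution}(iv).

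For the Hankel-matrix statements (iii) and (iv) the key computation is the closed form of the row-generating function. By the binomial theorem $\mathcal{I}_n(q)=\sum_{k=1}^n\binom{n-1}{k-1}n^{n-k}q^k=q(n+q)^{n-1}=S_n(f,1;q)$, whose reciprocal polynomial is $S^*_n(f,1;q)=q^n\mathcal{I}_n(1/q)=(1+qn)^{n-1}$. Hence Theorem \ref{thm+Sheffer+convolution}(i) at $\lambda=0$ yields both that $(\mathcal{I}_n(q))_{n\geq0}$ and that $((1+qn)^{n-1})_{n\geq1}$ are $q$-SM and $3$-$q$-log-convex, giving (iii) and (iv). Statement (v) follows from Theorem \ref{thm+Sheffer+convolution}(ii), since $f_{n+1}=(n+1)^n$, so that $(f_{n+1})_{n\geq0}$ is precisely the sequence $(n^{n-1})_{n\geq1}$. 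Finally, for (vi) I would note that the two convolution kernels are $[S_{n,k}(f,1)]$ and $[S_{n+1,k+1}(f,1)]=[\binom{n}{k}(n+1)^{n-k}]$, both of which preserve the SM property by Theorem \ref{thm+Sheffer+convolution}(v) at $\lambda=0$.

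I expect no genuine obstacle here: once the function $f=-W(-t)$ and its inverse $\bar{f}=t\exp(-t)$ are in place, the whole proposition is a dictionary-matching exercise against the conclusions of Theorem \ref{thm+Sheffer+convolution}. The only places demanding care are bookkeeping ones: getting the signs right when passing from $f$ to $\bar{f}$ (so that the alternating factor $(-1)^{n-k}$ in (iv) cancels against the sign in $S_{n,k}(\bar{f},1)$ to leave a genuinely nonnegative idempotent triangle), and evaluating the sum defining $\mathcal{I}_n(q)$ in closed form as $q(n+q)^{n-1}$, which is what reveals the reciprocal polynomials to be exactly $(1+qn)^{n-1}$.
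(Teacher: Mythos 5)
Your proof is correct and takes essentially the same route as the paper: identify the idempotent triangles with the Sheffer triangles of $f(t)=-W(-t)$ and $\bar{f}(t)=t\exp(-t)$, note that $1/\bar{f}'(t)=\exp(t)/(1-t)$ is a PF function, and read off all six items from Theorem \ref{thm+Sheffer+convolution} at $\lambda=0$. The paper states the conclusion as "immediate" without spelling out the entry identifications or the closed forms $\mathcal{I}_n(q)=q(n+q)^{n-1}$ and $S^*_n(f,1;q)=(1+qn)^{n-1}$; your write-up just supplies those routine verifications.
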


\subsection{Some numbers related to binomial coefficients}
It is well-known that the Pascal triangle
$[\binom{n}{k}]_{n,k\geq0}$ is TP, see \cite{Kar68} for instance.
Let $C_k = \binom{n+ck}{m+dk}$, where $n \geq m$. Then for integers
$d>c> 0$, the Toeplitz matrix $[C_{i-j}]_{i,j\geq0}$ of the finite
sequence $(C_k)_k$ is TP, which was conjectured in \cite{SW08} and
proved in \cite{Y09}. Recently, we also proved for real numbers $c
\geq d> 0$ that the Hankel matrix $[C_{i+j}]_{i,j\geq0}$ of the
infinite sequence $(C_k)_k$ is TP \cite{Zhu191}. In the following,
we will present more results for total positivity related to
binomial coefficients.
\begin{prop}
Let $c$ and $d$ be integers and $\{m,n\}\subseteq \mathbb{N}$. Then
\begin{itemize}
 \item [\rm (i)]
for $d>0$ and $d\geq c$, the low-triangular matrices
$\left[\binom{n}{k}\binom{n+ck}{m+dk}(n-k)!\right]_{n,k\geq0}$ and
$\left[\binom{n+ck}{m+dk}\right]_{n,k\geq0}$ are TP;
 \item [\rm (ii)]
for $d\leq-1$ and $c\in \mathbb{N}$, the low-triangular matrices
$\left[\binom{m}{k}\binom{n+ck}{m+dk}(m-k)!\right]_{m,k\geq0}$ and
$\left[\binom{n+ck}{m+dk}\right]_{m,k\geq0}$ are TP.
 \end{itemize}
\end{prop}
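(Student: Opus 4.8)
The plan is to realize each of the four matrices as a (generally non‑proper) exponential Riordan array $(g,f)$, read off $g$ and $f$ from the ordinary generating function of a single column, and then invoke Remark~\ref{rem+TP} once both $g$ and $f$ are shown to be P\'olya frequency functions. For part (i) I would fix $m$ and treat $n$ as the row index. Summing a column over $n$ gives $\sum_{n\ge0}\binom{n+ck}{m+dk}t^n=\dfrac{t^{\,m+(d-c)k}}{(1-t)^{\,m+dk+1}}=g(t)f(t)^k$ with $g(t)=\dfrac{t^m}{(1-t)^{m+1}}$ and $f(t)=\dfrac{t^{\,d-c}}{(1-t)^d}$, since $[t^n]g(t)f(t)^k=\binom{n+ck}{m+dk}$. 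Thus the first matrix of (i), whose entry equals $\binom{n}{k}(n-k)!\binom{n+ck}{m+dk}=\frac{n!}{k!}\binom{n+ck}{m+dk}$ for $k\le n$, is exactly the exponential Riordan array $(g,f)$. For part (ii) I would instead fix $n$ and let $m$ be the row index; then $\sum_{m\ge0}\binom{n+ck}{m+dk}t^m=t^{-dk}(1+t)^{n+ck}=g(t)f(t)^k$ with $g(t)=(1+t)^n$ and $f(t)=t^{-d}(1+t)^c$, and the first matrix of (ii) is the exponential Riordan array $(g,f)$ for this pair.

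In each part the second (unweighted) matrix $[\binom{n+ck}{m+dk}]$ is obtained from the first by scaling row $n$ (resp. $m$) by $1/n!$ (resp. $1/m!$) and column $k$ by $k!$; whenever the array is lower triangular these scalings match the supports, so by Lemma~\ref{lem+factor+TP} the two matrices in the part have the same total positivity and it suffices to treat the weighted one. The remaining, and entirely routine, step is the P\'olya frequency verification. Each $g$ and $f$ above is a product of factors of the form $t^j$ $(j\ge0)$, $(1-t)^{-j}$ $(j\ge1)$ and $(1+t)^j$ $(j\ge0)$: the first is a monomial (a finite sequence whose only zero is the real number $0$), the second equals $\prod_{i=1}^{j}(1-t)^{-1}$, and the third equals $\prod_{i=1}^{j}(1+t)$, each factor being PF. Since a product of PF functions is PF (the Toeplitz/Cauchy--Binet argument already used in the proof of Theorem~\ref{thm+Toeplitz}), both $g$ and $f$ are PF exactly under the stated hypotheses: $d>0$ and $d\ge c$ make $d-c\ge0$ and legitimize $(1-t)^{-d}$ in (i), while $d\le-1$ and $c\ge0$ make $-d\ge1$ and legitimize $t^{-d}$ and $(1+t)^c$ in (ii). Remark~\ref{rem+TP} then gives that $(g,f)$ is TP, and hence so are all four matrices in these ranges.

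The one point that needs genuine care — and which I expect to be the main obstacle — is the boundary case $c=d$ in part (i). There $f(t)=(1-t)^{-d}$ has $f(0)=1\neq0$, so $(g,f)$ fails to be lower triangular and the argument inside Theorem~\ref{thm+ERA+TP} that "$f$ PF implies the column‑power matrix $F=[[t^n]f^k]$ is TP" (which passes to $h=f/t$) does not apply verbatim. For the second (unweighted) matrix this is harmless: I would use the matrix identity $[\binom{n+ck}{m+dk}]=\Gamma(g)\,F$, where $\Gamma(g)$ is the Toeplitz matrix of the PF sequence $\big(\binom{n}{m}\big)_{n\ge0}$ and $F$ is the coefficient matrix of the powers of $f$ (here $F_{n,k}=\binom{n+dk-1}{n}$). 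Then $\Gamma(g)$ is TP, $F$ is TP for this $f$ as well, and Lemma~\ref{lem+prod+TP} gives total positivity of the product. The genuinely delicate object is the weighted matrix $\frac{n!}{k!}\binom{n+ck}{m+dk}$ truncated to $k\le n$: at $c=d$ this truncation differs from the full array $(g,f)$ above, so the clean scaling reduction of Lemma~\ref{lem+factor+TP} no longer applies and a separate direct argument is required — for instance exhibiting a lower‑triangular bidiagonal factorization of its production matrix, or verifying total positivity of its minors directly. Away from this boundary (that is, $d>c$ in (i) and all of (ii), where $f$ has order $\ge1$) the arrays are honestly lower triangular and the clean Riordan‑array route settles both matrices at once.
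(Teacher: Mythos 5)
Your main line of argument is exactly the paper's proof: the paper also rewrites $\binom{n}{k}\binom{n+ck}{m+dk}(n-k)!=\frac{n!}{k!}\binom{n+ck}{m+dk}$ and $\binom{m}{k}\binom{n+ck}{m+dk}(m-k)!=\frac{m!}{k!}\binom{n+ck}{m+dk}$, identifies these with the exponential Riordan arrays $\bigl(\frac{t^m}{(1-t)^{m+1}},\frac{t^{d-c}}{(1-t)^{d}}\bigr)$ and $\bigl((1+t)^n,t^{-d}(1+t)^{c}\bigr)$, notes that the four functions involved are P\'olya frequency functions under the stated hypotheses, and concludes via Theorem \ref{thm+ERA+TP} and Remark \ref{rem+TP}, with Lemma \ref{lem+factor+TP} handling the unweighted companions. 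For $d>c$ in (i), and for all of (ii), there is nothing to add.

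The boundary case $c=d$ that you flag in your last paragraph is a genuine issue, and one the paper passes over in silence: its displayed identities are valid entrywise for $k\le n$, but the identification of the proposition's lower-triangular matrix with the array $(g,f)$ as matrices silently uses $d-c\ge 1$ to guarantee that $(g,f)$ vanishes above the diagonal. However, the situation at $c=d$ is worse than you suggest. No ``separate direct argument'' for the truncated matrix can exist, because the truncated matrices there are simply not totally positive. Take $m=0$ and $c=d=1$: the lower-triangular truncation of $\left[\binom{n+k}{k}\right]_{n,k\ge0}$ has, on rows $\{1,2,3\}$ and columns $\{0,1,2\}$, the minor
\begin{equation*}
\det\begin{pmatrix}1&2&0\\ 1&3&6\\ 1&4&10\end{pmatrix}=-2<0,
\end{equation*}
and the weighted matrix $\left[\binom{n}{k}\binom{n+k}{k}(n-k)!\right]$ (which is truncated by its very definition, since it contains the factor $\binom{n}{k}$) has the corresponding minor equal to $-12$. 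So at $c=d$ the statement can only be salvaged by reading the matrices as the full, non-lower-triangular arrays $\bigl[\frac{n!}{k!}[t^n]g(t)f(t)^k\bigr]_{n,k\ge0}$; total positivity of those full arrays is precisely what the paper's argument, your Remark \ref{rem+TP} route, and your factorization $\Gamma(g)F$ of the unweighted array all establish. In short: your instinct that $c=d$ is the delicate point is correct, your proposed repair for the truncated weighted matrix cannot succeed, and both your proof and the paper's actually prove part (i) as literally stated only when $d>c$.
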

\begin{proof}
By Lemma \ref{lem+factor+TP}, it suffices to prove that matrices
$\left[\binom{n}{k}\binom{n+ck}{m+dk}(n-k)!\right]_{n,k\geq0}$ in
(i) and
$\left[\binom{m}{k}\binom{n+ck}{m+dk}(m-k)!\right]_{m,k\geq0}$ in
(ii) are TP. Note that
\begin{eqnarray*}
\binom{n}{k}\binom{n+ck}{m+dk}(n-k)!&=&\frac{n!}{k!}\binom{n+ck}{(n-m)+(c-d)k}\\
&=&\frac{n!}{k!}\binom{-m-dk-1}{(n-m)+(c-d)k}(-1)^{(n-m)+(c-d)k}\\
&=&\frac{n!}{k!}[t^{(n-m)+(c-d)k}]\frac{1}{(1-t)^{m+1+dk}}\\
&=&\frac{n!}{k!}[t^{n}]\frac{t^m}{(1-t)^{m+1}}\left(\frac{t^{d-c}}{(1-t)^{d}}\right)^k\\
&=&\left(\frac{t^m}{(1-t)^{m+1}},\frac{t^{d-c}}{(1-t)^{d}}\right),\\
\end{eqnarray*}
\begin{eqnarray*}
\binom{m}{k}\binom{n+ck}{m+dk}(m-k)!&=&\frac{m!}{k!}\binom{n+ck}{m+dk}\\
&=&\frac{m!}{k!}[t^{m+dk}](1+t)^{n+ck}\\
&=&\frac{m!}{k!}[t^m](1+t)^n\left(t^{-d}(1+t)^{c}\right)^k\\
&=&\left((1+t)^n,t^{-d}(1+t)^{c}\right),
\end{eqnarray*}
where $\frac{t^m}{(1-t)^{m+1}}$, $\frac{t^{d-c}}{(1-t)^{d}}$,
$(1+t)^n$ and $t^{-d}(1+t)^{c}$ are P\'olya frequency functions
under our conditions. Thus, it follows from Theorem \ref{thm+ERA+TP}
and Remark \ref{rem+TP} that
$\left[\binom{n}{k}\binom{n+ck}{m+dk}(n-k)!\right]_{n,k\geq0}$ in
(i) and
$\left[\binom{m}{k}\binom{n+ck}{m+dk}(m-k)!\right]_{m,k\geq0}$ in
(ii) are TP. The proof is complete.
\end{proof}

\section{Acknowledgements}
The author would like to thank the anonymous reviewer for many
valuable remarks and suggestions to improve the original manuscript.

\end{document}